\newtheorem{theorem}{Theorem}
\newtheorem{lemma}{Lemma}
\newtheorem{definition}{Definition}
\newtheorem{proposition}{Proposition}
\newtheorem{conjecture}{Conjecture}
\newtheorem{corollary}{Corollary}
\def\s{c}
\def\ar{\mathcal{T}_\alpha}
\begin{document}

\title{On {$\mathbf{\alpha}$}-roughly weighted games}

\author{Josep Freixas}
\author{Sascha Kurz}
\address{Josep Freixas\\Department of Applied Mathematics III and Engineering School of Manresa\\Universitat
Polytecnica de Catalunya\\Spain\footnote{This author acknowledges the support  of the Barcelona Graduate School
of Economics and of the Government of Catalonia.}}
\email{josep.freixas@upc.edu}
\address{Sascha Kurz\\Department for Mathematics, Physics and Informatics\\University Bayreuth\\Germany}
\email{sascha.kurz@uni-bayreuth.de}

\keywords{simple game, weighted majority game, complete simple game, roughly weighted game, voting theory, hierarchy}
\subjclass[2000]{Primary: 91B12; Secondary: 94C10}

\maketitle

\begin{abstract}
  Gvozdeva, Hemaspaandra, and Slinko (2011) have introduced three hierarchies for simple games in order to
  measure the distance of a given simple game to the class of (roughly) weighted voting games. Their
  third class $\mathcal{C}_\alpha$ consists of all simple games permitting a weighted representation such that each winning
  coalition has a weight of at least $1$ and each losing coalition a weight of at most $\alpha$. For a given
  game the minimal possible value of $\alpha$ is called its critical threshold value.  We continue the work 
  on the critical threshold value, initiated by Gvozdeva et al., and contribute some new results on the possible values
  for a given number of voters as well as some general bounds for restricted subclasses of games. A
  strong relation beween this concept and the cost of stability, i.e.\ the minimum amount of external payment to ensure
  stability in a coalitional game, is uncovered.
\end{abstract}

\section{Introduction}

\noindent
For a given set $N=\{1,\dots,n\}$
of $n$ voters a simple game is a function $\chi:2^N\rightarrow\{0,1\}$ which is monotone, i.e.\ $\chi(S)\le \chi(T)$
for all $S\subseteq T\subseteq N$, and fulfills $\chi(\emptyset)=0$, $\chi(N)=1$. Here $2^N$ denotes the set of all
subsets of $N$. Those subsets are also called coalitions and $N$ is called the grand coalition. By representing the
subsets of $N$ by their characteristic vectors in $\{0,1\}^n$ we can also speak of a (monotone) Boolean function.
If $\chi(S)=1$ then $S$ is called a winning coalition and otherwise a losing coalition. An important subclass
is the class of weighted voting games for which there are weights $w_i$ for $i\in N$ and a quota $q>0$ such that
the condition $\sum_{i\in S} w_i\ge q$ implies coalition $S$ is winning and the condition
$\sum_{i\in S} w_i<q$ implies coalition $S$ is losing. One attempt to generalize weighted voting games was the
introduction of roughly weighted games, where coalitions $S$ with $\sum_{i\in S} w_i=q$ can be either winning or
losing independently from each other. \footnote{Some authors, e.g.\ \cite{hierarchies}, allow $q=0$, which makes
sense in other contexts like circuits or Boolean algebra.
Later on, we want to rescale the quota $q$ to one, so that we forbid a quota of zero by definition. Another unpleasant
consequence of allowing $q=0$ would be that each simple game on $n$ voters is contained in a roughly weighted game
on $n+1$ voters, i.e., we can add to each given simple game a voter who forms a winning coalition on its own to obtain
a roughly weighted game.} As some games being important both for theory and practice are not even roughly weighted,
\cite{hierarchies} have introduced three hierarchies for simple games to \textit{measure} the \textit{distance} of
a given simple game to the class of (roughly) weighted voting games. In this paper we want to study their third
class $\mathcal{C}_\alpha$, where the tie-breaking point $q$ is extended to the interval $[1,\alpha]$ for an
$\alpha\in\mathbb{R}_{\ge 1}$. Given a game $\chi$, the smallest possible value for $\alpha$ is called
the critical threshold-value $\mu(\chi)$ of $\chi$, see the beginning of Section~\ref{sec_preliminaries}. Let
$\s_\mathcal{S}(n)$ denote the largest critical threshold-value within the class of simple games
$\chi\in\mathcal{S}_n$ on $n$ voters. 
By $Spec_\mathcal{S}(n):=\{\mu(\chi)\mid \chi\in\mathcal{S}_n\}$ we denote the set of possible critical threshold
values.

During the program of classification of simple games, see e.g.\ \cite{1112.91002}, several subclasses have been
proposed and analyzed. Although weighted voting games are one of the most studied and most simple forms of simple
games, they have the shortcomming of not covering all games. The classes $\mathcal{C}_\alpha$ resolve this by
introducing a parameter $\alpha$, so that by varying $\alpha$ the classes of games can be made as large as possible.
The critical threshold value in some sense measures the complexity of a given game. Another such measure is the
dimension of a simple game, see e.g.\ \cite{0765.90030}. Here we observe that there is no direct relation between
these two concepts, i.e.\ simple games with dimension $1$ have a critical threshold value of $1$, but simple games
with dimension larger than $1$ can have arbitrarily large critical threshold values.

Also graphs have been proposed as a suitable representational language for coalitional games. There are a lot of
different graph-based games like e.g.\ shortest path games, connectivity games, minimum cost spanning tree games,
and network flow games. The players of a network flow game are the edges in an edge weighted graph, see
\cite{0773.90097} and \cite{0498.90030}. For so called threshold network flow games, see e.g.\ \cite{pre05940089},
a coalition of edges is winning if and only if those edges allow a flow from a given source to a sink which meets
or exceeds a given quota or threshold. Here the same phenomenon as for weighted voting games arises, i.e.\ those
graph based \textit{weighted} games are not fully expressive, but general network flow games are (within the class
of stable games). Similarly, one can define a hierarchy by requesting a flow of at least $1$ for each winning
coalition and a flow of at least $\alpha$ for each losing coalition.

The concept of the cost of stability was introduced in \cite{Bachrach:2009:CSC:1692490.1692502}. It asks for the
minimum amount of external payment given to the members of a coaltion to ensure stability in a coalition game,
i.e., to guarantee a non-empty core. It will turn out that the cost of stability is closely related to the notion
of $\alpha$-roughly weightedness. For network flow games some results on the cost of stability can be found in
\cite{pre05617303}.

Another line of research, which is related with our considerations, looks at the approximability of Boolean
functions by linear threshold functions, see \cite{approximation_of_linear_threshold}.

\medskip

In \cite{hierarchies} the authors have proven the bounds
$\frac{1}{2}\cdot\left\lfloor\frac{n}{2}\right\rfloor\le \s_\mathcal{S}(n)\le\frac{n-2}{2}$ and
determined the spectrum for $n\le 6$. For odd numbers of voters we slightly improve the lower bound to
$\s_\mathcal{S}(n)\ge \left\lfloor\frac{n^2}{4}\right\rfloor/n$, which is conjectured to be tight. As upper bound
we prove $\s_\mathcal{S}(n)\le\frac{n}{3}$. In order to determine the exact values of $\s_\mathcal{S}(n)$ for small
numbers of voters we provide an integer linear programming formulation. This approach is capable to treat cases
where exhaustive enumeration is computationally infeasible due to the rapidly increasing number of voting structures.
Admittedly, this newly introduced technique, which might be applicable in several other contexts in algorithmic
game theory too, is still limited to a rather small number of voters.

From known results on the spectrum of the determinants of binary $n\times n$-matrices we are able to  conclude some
information on the spectrum of the possible critical threshold values.

The same set of problems can also be studied for subclasses of simple games and we do so for complete simple games,
denoted here by $\mathcal{C}$. Here we conjecture that the maximum critical threshold value $\s_\mathcal{C}(n)$ of
a complete simple game on $n$ voters is bounded by a constant multiplied by $\sqrt{n}$ on both sides. A proof could
be obtained for the lower bound, and, for some special subclasses of complete simple games, also for the upper
bound. In general, we can show that $\s_\mathcal{C}(n)$ grows slower than any linear function reflecting the
valuation that complete simple games are somewhat \textit{nearer} to weighted voting games than general simple games.

The remaining part of this paper is organized as follows:
After this introduction we present the basic definitions and results on linear programs determining the critical
threshold value of a simple game or a complete simple game in Section~\ref{sec_preliminaries}. In
Section~\ref{sec_certificates} we provide certificates for the critical threshold value. General lower and upper
bounds on the maximum possible critical threshold values $\s_\mathcal{S}(n)$ and $\s_\mathcal{C}(n)$ of simple
games and complete simple games are the topic of Section~\ref{sec_maximal_critical_alpha}. In 
Section~\ref{sec_ilp_max_alpha} we provide an integer linear programming formulation to determine the exact value
$\s_\mathcal{S}(n)$ and $\s_\mathcal{C}(n)$. To this end we utilize the dual of the linear program determining
the critical threshold value. In Section~\ref{sec_spectrum} we give some restrictions on the set of possible critical
threshold values and tighten the findings of \cite{hierarchies}. We end with a conclusion in Section~\ref{sec_conclusion}.

\section{Preliminaries}
\label{sec_preliminaries}

\noindent
In this paper we want to study different classes of \textit{voting structures}. As abbreviation for the most
general class we use the notation $\mathcal{B}_n$ for the set of Boolean functions $f:2^N\rightarrow \{0,1\}$
with $f(\emptyset)=0$ on $n$ variables\footnote{We remark that usually $f(\emptyset)=1$ is possible for Boolean
functions too. In our context the notion of $\alpha$-roughly weightedness makes sense for $f(\emptyset)=0$, so
that we generally require this property. Later on, we specialize these sets to monotone Boolean functions with
the additional restriction $f(N)=1$, called simple games, and use the notation $\mathcal{S}_n$. Even more
refined subclasses are the set $\mathcal{C}_n$ of complete simple games and the set $\mathcal{W}_n$ of weighted
voting games on $n$ voters. These sets are ordered as
$\mathcal{B}_n\supseteqq \mathcal{S}_n\supseteqq \mathcal{C}_n\supseteqq\mathcal{W}_n$, where the inclusions
are strict if $n$ is large enough. In order to state examples in a compact manner we often choose weighted
voting games $\chi$, since they can be represented by $[q;w_1,\dots,w_n]$, where $q$ is a quota and the $w_i$
are weights. We have $\chi(S)=1$ if and only if the sum $\sum_{i\in S} w_i\ge q$ for each subset $S\subseteq N$.}
As a shortcut for the sum of weights $\sum_{i\in S} w_i$ of a coalition $S\subseteq N$ we will use $w(S)$ in the
following.

In this section we state the preliminaries, i.e., we define the mentioned classes of voting structures and provide
tailored characterizations of the criticial threshold value within these classes. As a first result we determine
the largest possible critical threshold value for Boolean functions in Lemma~\ref{lemma_s_boolean}. Since it is
closely related, we briefly introduce the concept of the cost of stability for binary 
voting structures.
\begin{definition}
  \label{def_alpha_roughly_weighted}
  A (Boolean) function $f:2^N\rightarrow \{0,1\}$ with $f(\emptyset)=0$ is called \emph{$\alpha$-roughly weighted}
  for an $\alpha\in\mathbb{R}_{\ge 1}$
  if there are weights
  $w_1,\dots,w_n\in\mathbb{R}$ fulfilling
  $$
    w(S)\ge1\quad\quad\forall S\subseteq N:f(S)=1
  $$
  and
  $$
    w(S)\le \alpha\quad\quad\forall S\subseteq N:f(S)=0.
  $$
\end{definition}

We remark that a function $f$ with $f(\emptyset)=1$ cannot be $\alpha$-roughly weighted for any $\alpha\in\mathbb{R}$.
In contrast to most definitions of roughly weighted games we allow negative weights, in the first run, and consider
a wider class than simple games in our initial definition, i.e.\ Boolean functions with $f(\emptyset)=0$.
Later on, we will focus on subclasses of $\mathcal{B}_n$, where we can assume that all weights are non-negative. By
$\ar$ (instead of $\mathcal{C}_\alpha$ as in \cite{hierarchies}) we denote the class of all $\alpha$-roughly weighted
Boolean functions $f$ with $f(\emptyset)=0$. If $f\in\ar$ but $f\notin\mathcal{T}_{\alpha'}$ for all $1\le\alpha'<\alpha$,
we call $\alpha$ the critical threshold value $\mu(f)$ of $f$. Given $f$ we can determine the critical threshold value
using the following linear program: 
\begin{equation}
\label{lp_critical_alpha_value}
\begin{array}{rl}
  \text{Min} & \alpha  \\
   & w(S) \ge 1       \quad\forall S\subseteq N:f(S)=1 \\
   & w(S) \le \alpha  \quad\forall S\subseteq N:f(S)=0 \\
   & \alpha\ge 1\\
   & w_1,\dots,w_n\in\mathbb{R}
\end{array}
\end{equation}
We consider it convenient to explicitly add the constraint $\alpha\ge 1$ in Definition~\ref{def_alpha_roughly_weighted}, 
in accordance with the definition in \cite{hierarchies}, and in the linear program~(\ref{lp_critical_alpha_value}).
Otherwise we would
obtain the optimal solution $\alpha=0$ for the weighted game $[2;1,1]\in\mathcal{B}_2$ or the optimal solution
$\alpha=\frac{2}{3}$ for the weighted game $[3;2,2,1,1]\in\mathcal{B}_4$ using the weights $w_1=w_2=\frac{2}{3}$
and $w_3=w_4=\frac{1}{3}$. Since there are no coalitions with weights strictly between $\frac{2}{3}$ and $1$ there
are no contradicting implications. Arguably, values less than $1$ contain more information, but on the other hand
makes notation more complicated. To avoid any misconception we directly require $\alpha\ge 1$ (as in
Definition~\ref{def_alpha_roughly_weighted}) to guarantee non-contradicting implications independently from the
possible weights of the coalitions.

At first, we remark that the inequality system~(\ref{lp_critical_alpha_value}) has at least one feasible solution
given by $w_i=1$ for all $1\le i\le n$ and $\alpha=n$. Next we observe that the critical threshold value is a rational
number, as it is the optimum solution of a linear programming problem with rational coefficients, and that we can
restrict ourselves to rational weights $w_i$. For a general Boolean function $f:2^N\rightarrow\{0,1\}$ with
$f(\emptyset)=0$ negative weights may be necessary to achieve the critical threshold value. An example is given by
the function $f$ of three variables whose entire set of coalitions $S$ with $f(S)=1$ is given by
$\{\{1\},\{2\},\{1,2\}\}$. By considering the weights $w_1=w_2=1$, $w_3=-2$ we see that it is $1$-roughly weighted.
On the other hand we have the inequalities $w_1\ge 1$, $w_2\ge 1$, and $w_1+w_2+w_3\le\alpha=1$ from which we conclude
$w_3\le -1$. Another way to look at this example is to say that the critical threshold value would be $2$ if only
non-negative weights are allowed. (Here $n=3$ voters are the smallest possibility, i.e.\ for $n\le 2$ there are
non-negative realizations for the critical threshold value.)

A quite natural question is to ask for the largest critical threshold value $\mu(f)$ within the class of all
Boolean functions $f:2^N\rightarrow \{0,1\}$ with $f(\emptyset)=0$, which we denote by $\s_\mathcal{B}(n)$,
i.e.\ $\s_\mathcal{B}(n)=\max\{ \mu(f)\mid f\in \mathcal{B}_n\}$.

\begin{lemma}
  \label{lemma_s_boolean}
   $\s_\mathcal{B}(n)=n$.
\end{lemma}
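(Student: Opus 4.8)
The plan is to prove the two matching inequalities $\s_\mathcal{B}(n)\le n$ and $\s_\mathcal{B}(n)\ge n$ separately and then combine them. The upper bound is essentially free, and the whole content lies in finding one function that forces the weight of the grand coalition to be as large as $n$.

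For the upper bound I would invoke the feasible solution already noted after the linear program~(\ref{lp_critical_alpha_value}): setting $w_i=1$ for all $i\in N$ and $\alpha=n$ satisfies every constraint for \emph{any} $f\in\mathcal{B}_n$. Indeed, since $f(\emptyset)=0$, every winning coalition $S$ is non-empty, whence $w(S)=|S|\ge 1$, while every losing coalition trivially obeys $w(S)=|S|\le n$. Thus each $f\in\mathcal{B}_n$ already lies in $\mathcal{T}_n$, so $\mu(f)\le n$ for all $f$ and therefore $\s_\mathcal{B}(n)\le n$.

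For the lower bound I would exhibit a single function attaining $\mu(f)=n$. Assume $n\ge 2$ and let $f$ be the function whose winning coalitions are exactly the $n$ singletons, i.e.\ $f(S)=1$ if and only if $|S|=1$; in particular $f(\emptyset)=0$ and $f(N)=0$, and these two requirements do not clash precisely because $n\ge 2$. The decisive point, and what separates $\mathcal{B}_n$ from the monotone classes, is that $f$ need not be monotone: we may keep all singletons winning while declaring the grand coalition losing. The constraints $w(\{i\})\ge 1$ then force $w_i\ge 1$ for every $i$, whereas the constraint for the losing coalition $N$ reads $w(N)=\sum_{i\in N}w_i\le\alpha$. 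Combining these gives $\alpha\ge\sum_{i\in N}w_i\ge n$ for every feasible weighting, so $\mu(f)\ge n$. Together with the upper bound this yields $\mu(f)=n$, and hence $\s_\mathcal{B}(n)\ge n$. The case $n=1$ is trivial, since by the constraint $\alpha\ge 1$ every $f\in\mathcal{B}_1$ has $\mu(f)=1$, so $\s_\mathcal{B}(1)=1=n$.

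I do not expect a genuine obstacle: both directions are short. The only subtlety worth stressing is that the lower-bound construction rests entirely on abandoning monotonicity, so the same trick will not transfer to $\mathcal{S}_n$, $\mathcal{C}_n$, or $\mathcal{W}_n$, where monotonicity would force $N$ to be winning once all singletons are winning; this is consistent with the strictly smaller values of $\s_\mathcal{S}(n)$ and $\s_\mathcal{C}(n)$ discussed in the introduction.
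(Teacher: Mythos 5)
Your proof is correct and follows essentially the same route as the paper: the upper bound via the uniform weights $w_i=1$, and the lower bound via the function with all singletons winning and $N$ losing, which forces $w(N)\ge n$. The only (welcome) extra care is your explicit handling of $n=1$, which the paper glosses over.
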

\begin{proof}
  By choosing the weights $w_i=1$ for all $1\le i\le n$ we have $1\le w(S)\le n$ for all $\emptyset\neq S\subseteq N$.
  Thus all functions  $f:2^N\rightarrow \{0,1\}$ with $f(\emptyset)=0$ are $n$-roughly weighted. The maximum
  $\s_\mathcal{B}(n)=n$ is attained for example at the function with $f(N)=0$ and $f(\{i\})=1$ for all $1\le i\le n$.
  Since the singletons $\{i\}$ are winning, we have $w_i\ge 1$ for all $i\in N$, so that $w(N)\ge n$ while $N$ is a
  losing coalition.
\end{proof}

We would like to remark that if we additionally require $f(N)=1$, then the critical threshold value is at most $n-1$,
which is tight (the proof of Lemma~\ref{lemma_s_boolean} can be easily adapted). 

\bigskip
\bigskip

More interesting subclasses of Boolean functions with $f(\emptyset)=0$ are simple games, i.e.\ monotone Boolean
functions with $f(\emptyset)=0$ and $f(N)=1$, where $f(S)\le f(T)$ for all $S\subseteq T$. By $\ar\cap\mathcal{S}_n$
we denote the class of all $\alpha$-roughly weighted simple games consisting of $n$ voters and by
$\s_\mathcal{S}(n):=\max\{\mu(f)\mid f\in\mathcal{S}_n\}$ the largest critical threshold value within the class
of simple games consisting of $n$~voters. For simple games we can restrict ourselves to non-negative weights and
can drop some of the inequalities in the linear program~(\ref{lp_critical_alpha_value}). (This is not true
for general Boolean functions as demonstrated in the previous example.)

\begin{lemma}
  \label{lemma_non_negative_weights_simple_game}
  All simple games $\chi\in \ar\cap\mathcal{S}_n$ admit a representation in non-negative weights.
\end{lemma}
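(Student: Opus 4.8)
The plan is to start from an arbitrary $\alpha$-roughly weighted representation $w_1,\dots,w_n\in\mathbb{R}$ of $\chi$ (which exists since $\chi\in\ar$) and to simply truncate the negative weights to zero, i.e.\ to set $w_i':=\max\{w_i,0\}$ for all $i\in N$. I would then verify that the resulting non-negative weights $w_1',\dots,w_n'$ still realize the very same threshold $\alpha$, which immediately yields the claim and in fact shows that the critical threshold value is unchanged when one restricts to non-negative weights.

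For the winning coalitions the verification is effortless: since $w_i'\ge w_i$ for every $i\in N$, raising the weights can only increase coalition sums, so $w'(S)=\sum_{i\in S}w_i'\ge\sum_{i\in S}w_i=w(S)\ge 1$ holds for every $S$ with $\chi(S)=1$. Hence none of the lower-bound constraints of the linear program~(\ref{lp_critical_alpha_value}) can be violated by the passage from $w$ to $w'$.

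The crux lies with the losing coalitions, and this is precisely where monotonicity of the simple game enters. Fix a losing coalition $S$, so $\chi(S)=0$, and let $S^+:=\{i\in S:w_i>0\}$ collect its members of strictly positive original weight. By construction $w'(S)=\sum_{i\in S}\max\{w_i,0\}=\sum_{i\in S^+}w_i=w(S^+)$. Since $S^+\subseteq S$ and $\chi$ is monotone, $\chi(S^+)\le\chi(S)=0$, so $S^+$ is itself a losing coalition; therefore the original representation already guarantees $w(S^+)\le\alpha$. Combining the two observations gives $w'(S)=w(S^+)\le\alpha$, which is exactly the upper-bound constraint for $S$.

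The single delicate point is the realization that deleting the negatively-weighted players from a losing coalition keeps it losing. This is the only step requiring that $\chi$ be a genuine simple game rather than an arbitrary Boolean function, and it is precisely what breaks down in the three-variable example of the preceding discussion, where the losing grand coalition contains a winning subcoalition. Beyond securing this monotonicity observation I do not anticipate any real obstacle; once it is in place the proof is complete.
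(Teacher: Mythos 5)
Your proof is correct and follows essentially the same route as the paper: truncate negative weights to zero, note that winning coalitions only gain weight, and use monotonicity to see that the positively-weighted part of a losing coalition is still losing and carries the full truncated weight. The only cosmetic difference is that you restrict to strictly positive weights (giving equality $w'(S)=w(S^+)$) where the paper keeps the zero-weight voters and settles for an inequality; both are fine.
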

\begin{proof}
  Let $w_i\in\mathbb{R}$, for $1\le i\le n$, be suitable weights. We set $w_i':=\max(w_i,0)\in\mathbb{R}_{\ge 0}$
  for all $1\le i\le n$. For each winning coalition $S\subseteq N$ we have $w'(S)\ge w(S)\ge 1$. Due to the
  monotonicity property of simple games for each losing coalition $T\subseteq N$ the coalition
  $T':=\{i\in T\,:\,w_i\ge 0\}$ is also losing. 
  Thus we have $w'(T)\le w(T')\le \alpha$.
\end{proof}

We remark that we have not used $\chi(\emptyset)=0$ or $\chi(N)=1$ so that the statement can be slightly generalized.

\begin{definition}
  Given a simple game $\chi$ a coalition $S\subseteq N$ is called a \emph{minimal winning coalition} if $\chi(S)=1$
  and $\chi(S')=0$ for all proper subsets $S'$ of $S$.  Similarly, a coalition $T\subseteq N$ is called a \emph{maximal
  losing coalition} if $\chi(T)=0$ and $\chi(T')=1$ for all $T'\subseteq N$ where $T$ is a proper subset of $T'$. By
  $\mathcal{W}$ we denote the set of minimal winning coalitions and by $\mathcal{L}$ the set of maximal losing coalitions.
\end{definition}

We would like to remark that a simple game can be completely reconstructed from either the set $\mathcal{W}$ of its
minimal winning coalitions or the set $\mathcal{L}$ of its maximal losing coalitions, i.e.\ a coalition $S\subseteq N$
is winning if and only if it contains a subset $S'\in \mathcal{W}$. Similarly, a coalition $T\subseteq N$
is losing if there is a $T'\in\mathcal{L}$ with $T\subseteq T'$. 

\begin{proposition}
  \label{prop_lp_critical_alpha_simple_game}
  The critical threshold value $\mu(\chi)$ of a simple game $\chi\in\mathcal{S}_n$ is given by the optimal
  target value of the following linear program:
  $$
    \begin{array}{rl}
      \text{Min} & \alpha  \\
       & w(S) \ge 1       \quad\forall S\in\mathcal{W} \\
       & w(S) \le \alpha  \quad\forall S\in\mathcal{L} \\
       & \alpha\ge 1\\
       & w_1,\dots,w_n\ge 0
    \end{array}
  $$
\end{proposition}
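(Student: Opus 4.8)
The plan is to show that the linear program in the statement has the same optimal value as the defining program~(\ref{lp_critical_alpha_value}), which by definition equals $\mu(\chi)$. Two simplifications are made relative to~(\ref{lp_critical_alpha_value}): the weights are restricted to be non-negative, and the winning/losing constraints are imposed only on minimal winning and maximal losing coalitions, respectively. I would establish each simplification in turn and then combine them.

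First I would invoke Lemma~\ref{lemma_non_negative_weights_simple_game} to argue that restricting to $w_i\ge 0$ does not change the optimal value of~(\ref{lp_critical_alpha_value}). Indeed, its proof shows that from any feasible solution $(w,\alpha)$ one can construct non-negative weights $w'$ that remain feasible with the \emph{same} $\alpha$; hence the infimum of $\alpha$ over non-negative weights coincides with $\mu(\chi)$.

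Next, working with non-negative weights, I would show that dropping all constraints except those for minimal winning and maximal losing coalitions leaves the optimal value unchanged. Since the reduced program retains only a subset of the constraints, every solution feasible for the non-negative version of~(\ref{lp_critical_alpha_value}) is feasible here, giving one inequality between the optima. For the reverse inequality I would verify that a solution feasible for the reduced program already satisfies all the dropped constraints: any winning coalition $S$ contains some minimal winning $S'\in\mathcal{W}$, so non-negativity yields $w(S)\ge w(S')\ge 1$; dually, any losing coalition $T$ is contained in some maximal losing $T'\in\mathcal{L}$, so $w(T)\le w(T')\le\alpha$. Thus the two feasible regions induce the same set of achievable $\alpha$, and the optimal values agree.

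There is no genuine obstacle here; the only point requiring care is that both redundancy arguments rely essentially on the non-negativity of the weights, which is precisely why the reduction fails for general Boolean functions (as illustrated by the three-variable example preceding Lemma~\ref{lemma_non_negative_weights_simple_game}). The monotonicity of $\chi$ enters only through the structural facts that every winning coalition contains a minimal winning one and every losing coalition is contained in a maximal losing one, as recorded in the reconstruction remark following the definition of $\mathcal{W}$ and $\mathcal{L}$.
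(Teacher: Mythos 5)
Your proposal is correct and follows essentially the same route as the paper: invoke Lemma~\ref{lemma_non_negative_weights_simple_game} to restrict to non-negative weights, then show the omitted constraints are redundant because every winning coalition contains some $S'\in\mathcal{W}$ (giving $w(S)\ge w(S')\ge 1$) and every losing coalition is contained in some $T'\in\mathcal{L}$ (giving $w(T)\le w(T')\le\alpha$). The only difference is presentational: you spell out the two-step comparison of optimal values slightly more explicitly than the paper does.
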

\begin{proof}
  Due to Lemma~\ref{lemma_non_negative_weights_simple_game} we can assume w.l.o.g.\ that $w_1,\dots,w_n\ge 0$. With this
  it suffices to prove that a feasible solution of the stated linear program is also feasible for the linear
  program~(\ref{lp_critical_alpha_value}). Let $S\subseteq N$ be an arbitrary winning coalition, i.e., $\chi(S)=1$.
  Since there exists an $S'\in\mathcal{W}$ with $S'\subseteq S$ we have $$w(S)\overset{w_i\ge 0}{\ge} w(S')\ge 1.$$ 
  Similarly, for each losing coalition $T\subseteq N$ there exists a $T'\in\mathcal{L}$ with
  $T\subseteq T'$ so that we have
  $$
    w(T)\overset{w_i\ge 0}{\le} w(T')\le \alpha.
  $$
\end{proof}

Again, we have not used $\chi(\emptyset)=0$ or $\chi(N)=1$ in the proof.

\bigskip
\bigskip

A well studied subclass of simple games (and superclass of weighted voting games) arises from Isbell's desirability
relation, see \cite{0083.14301}: We write $i\sqsupset j$ for two voters $i,j\in N$ iff we have
$\chi\Big(\{i\}\cup S\backslash\{j\}\Big)\ge\chi (S)$ for all $j\in S\subseteq N\backslash\{i\}$. A pair $(N,\chi)$
is called a \emph{complete simple game} if it is a simple game and the binary relation $\sqsupset$ is a total preorder.
To factor out symmetry we assume $i\sqsupset j$ for all $1\le i<j\le n$, i.e.\ voter~$i$ is at least as powerful as
voter~$j$, in the following. We abbreviate $i\sqsupset j$, $j\sqsupset i$ by $i~\square~j$ forming equivalence classes
of voters $N_1,\dots,N_t$. Let us denote $|N_i|=n_i$ for $1\le i\le t$. We assume that those equivalence
classes are ordered with decreasing influence, i.e.\ for $u\le v$, $i\in N_u$, $j\in N_v$ we have $i\sqsupset j$.
A coalition in a complete simple game can be described by the numbers $a_h$ of voters from equivalence class $N_h$,
i.e.\ by a vector $(a_1,\dots,a_t)$. Note that the same vector represents ${n_1\choose a_1}{n_2\choose a_2}\dots {n_t\choose a_t}$
coalitions that only differ in equivalent voters.

To transfer the concept of minimal winning coalitions and maximal losing coalitions to vectors, we need a suitable
partial ordering:
\begin{definition}
  \label{def_smaller_vector}
  For two integer vectors $\widetilde{a}=(a_1,\dots,a_t)$ and
  $\widetilde{b}=(b_1,\dots,b_t)$ we write $\widetilde{a}\preceq \widetilde{b}$ if
  we have
  $
    \sum\limits_{i=1}^{k} a_i \le \sum\limits_{i=1}^{k} b_i
  $
  for all $1\le k\le t$. For $\widetilde{a}\preceq \widetilde{b}$ and $\widetilde{a}\neq \widetilde{b}$ we use
  $\widetilde{a}\prec\widetilde{b}$ as an abbreviation. If neither $\widetilde{a}\preceq \widetilde{b}$ nor
  $\widetilde{b}\preceq \widetilde{a}$ holds we write $\widetilde{a}\bowtie \widetilde{b}$.
\end{definition}
In words, we say that $\widetilde{a}$ is smaller than $\widetilde{b}$ if $\widetilde{a}\prec\widetilde{b}$ and
that $\widetilde{a}$ and $\widetilde{b}$ are incomparable if $\widetilde{a}\bowtie \widetilde{b}$. 

With Definition \ref{def_smaller_vector} and the representation of coalitions as vectors in $\mathbb{N}^t$ at
hand, we can define: 
\begin{definition}
\label{def_shift_minimal}
A vector $\widetilde{m}:=(m_1,\dots,m_t)$ in a complete simple game\\ 
$\Big((n_1,\dots,n_t),\chi\Big)$ is a \emph{shift-minimal winning vector} if $\widetilde{m}$ is a
winning vector and every vector $\widetilde{m}'\prec\widetilde{m}$ is
losing. Analogously, a vector $\widetilde{m}$ is a \emph{shift-maximal losing vector} if $\widetilde{m}$
is a losing vector and every vector $\widetilde{m}'\succ\widetilde{m}$ is winning.
\end{definition}

As an example we consider the complete simple game $\chi\in\mathcal{C}_4$ whose minimal winning
coalitions are given by $\{1,2\}$, $\{1,3\}$, $\{1,4\}$, and $\{2,3,4\}$. The equivalence classes of voters
are given by $N_1=\{1\}$ and $N_2=\{2,3,4\}$. With this the shift-minimal winning vectors are given by $(1,1)$
and $(0,3)$. By $\overline{\mathcal{W}}$ we denote the set of shift-minimal winning vectors and by
$\overline{\mathcal{L}}$ the set of shift-maximal losing vectors. Each complete simple game can be entirely
reconstructed from either $\overline{\mathcal{W}}$ or $\overline{\mathcal{L}}$. 

In \cite{complete_simple_games} there is a very useful parameterization theorem for complete simple games:
\begin{theorem}
  \label{thm_csg_characterization}

  \vspace*{0mm}

  \noindent
  \begin{itemize}
   \item[(a)] Let a vector $$\widetilde{n}=(n_1,\dots,n_t)\in\mathbb{N}_{>0}^t$$ and a matrix
              $$\mathcal{M}=\begin{pmatrix}m_{1,1}&m_{1,2}&\dots&m_{1,t}\\m_{2,1}&m_{2,2}&\dots&m_{2,t}\\
              \vdots&\vdots&\ddots&\vdots\\m_{r,1}&m_{r,2}&\dots&m_{r,t}\end{pmatrix}=
              \begin{pmatrix}\widetilde{m}_1\\\widetilde{m}_2\\\vdots\\\widetilde{m}_r\end{pmatrix}$$
              be given, which satisfies the following properties:
              \begin{itemize}
               \item[(i)]   $0\le m_{i,j}\le n_j$, $m_{i,j}\in\mathbb{N}_{\ge 0}$ for $1\le i\le r$, $1\le j\le t$,
               \item[(ii)]  $\widetilde{m}_i\bowtie\widetilde{m}_j$ for all $1\le i<j\le r$,
               \item[(iii)] for each $1\le j<t$ there is at least one row-index $i$ such that
                            $m_{i,j}>0$, $m_{i,j+1}<n_{j+1}$ if $t>1$ and $m_{1,1}>0$ if $t=1$, and
               \item[(iv)]  $\widetilde{m}_i\gtrdot \widetilde{m}_{i+1}$ for $1\le i<t$ (lexicographic order).
              \end{itemize}
              Then there exists a complete simple game $(N,\chi)$ whose equivalence classes of voters
              have cardinalities as in $\widetilde{n}$ and whose shift-minimal winning vectors coincide with the
              rows of $\mathcal{M}$.
   \item[(b)] Two complete games $\left(\widetilde{n}_1,\mathcal{M}_1\right)$ and $\left(\widetilde{n}_2,\mathcal{M}_2\right)$
              are isomorphic, i.e., there exists a permutation of the voters so that the games are equal, if and only
              if $\widetilde{n}_1=\widetilde{n}_2$ and $\mathcal{M}_1=\mathcal{M}_2$.
  \end{itemize}
\end{theorem}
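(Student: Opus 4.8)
The plan is to prove part (a) by an explicit construction and part (b) by a uniqueness argument. For part (a), given $\widetilde{n}$ and $\mathcal{M}$ I would declare a coalition, described by its class-count vector $\widetilde{a}=(a_1,\dots,a_t)$ with $0\le a_j\le n_j$, to be \emph{winning} precisely when $\widetilde{a}\succeq\widetilde{m}_i$ for at least one row $\widetilde{m}_i$ of $\mathcal{M}$, and losing otherwise. Since $\chi$ depends only on the vector $\widetilde{a}$, any two coalitions differing by a permutation of equivalent voters automatically receive the same value, so $\chi$ is well defined on $2^N$ and symmetric within each prospective class $N_h$. The simple-game axioms are then quick to check: monotonicity holds because the set of winning vectors is by construction an up-set for $\preceq$, and enlarging a coalition can only increase the partial sums $\sum_{\ell=1}^{k}a_\ell$; the grand coalition $\widetilde{n}$ is the $\preceq$-maximum and dominates every row since $m_{i,j}\le n_j$, so $\chi(N)=1$; and conditions (ii) and (iii) together force every row to be nonzero, whence $\chi(\emptyset)=0$.

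The heart of the argument is to show that Isbell's desirability relation $\sqsupset$ is a total preorder whose equivalence classes are exactly $N_1,\dots,N_t$. The crucial observation is that $\preceq$ encodes desirability. If $u<v$ and in a coalition we replace a voter $j\in N_v$ by a voter $i\in N_u$, then $a_u$ increases by one and $a_v$ decreases by one, so every partial sum $\sum_{\ell=1}^{k}a_\ell$ with $u\le k<v$ rises by one and all others are unchanged; the resulting vector thus $\succeq$-dominates the old one, and winning status is preserved. Hence $i\sqsupset j$ whenever $i$ lies in an earlier class than $j$, which already yields totality of $\sqsupset$, while voters inside one class are mutually equivalent by the symmetry noted above.

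It remains to verify that consecutive classes are genuinely \emph{strictly} ordered, which is exactly what condition (iii) provides: for each $j<t$ it supplies a row $\widetilde{m}_i$ with $m_{i,j}>0$ and $m_{i,j+1}<n_{j+1}$, so the vector obtained from $\widetilde{m}_i$ by moving one unit from coordinate $j$ to coordinate $j+1$ is $\prec\widetilde{m}_i$ and hence losing; this losing coalition becomes the winning coalition $\widetilde{m}_i$ precisely by swapping a class-$(j+1)$ voter for a class-$j$ voter, witnessing $j+1\not\sqsupset j$. Therefore the $t$ classes do not collapse, and the equivalence classes are exactly $N_1,\dots,N_t$. To finish part (a) I would identify the shift-minimal winning vectors with the rows of $\mathcal{M}$: by the incomparability condition (ii) a strictly smaller vector $\widetilde{m}'\prec\widetilde{m}_i$ cannot be winning, since it would dominate another row and contradict (ii), so each row is shift-minimal; conversely every shift-minimal winning vector dominates some row and by shift-minimality must coincide with it.

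For part (b), an isomorphism of complete simple games maps equivalence classes to equivalence classes preserving both their sizes and the desirability order, hence preserves $\widetilde{n}$ and the set $\overline{\mathcal{W}}$ of shift-minimal winning vectors; the lexicographic normalization (iv) removes the only remaining freedom, namely the ordering of the rows, so the matrix $\mathcal{M}$ is uniquely determined, and identical parameters trivially yield equal games up to relabelling within classes. I expect the main obstacle to lie in the third step: proving that the constructed desirability preorder is \emph{exactly} as coarse as intended---neither finer (ruled out by symmetry within classes) nor coarser than the prescribed partition---since this is precisely where conditions (ii) and (iii) must be deployed in full, and where a naive up-set construction could otherwise accidentally merge two of the intended classes.
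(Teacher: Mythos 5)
The paper does not prove this theorem at all: it is imported verbatim from the reference \cite{complete_simple_games} (the Carreras--Freixas parameterization of complete simple games), so there is no in-paper argument to compare yours against. That said, your reconstruction is the standard proof of that characterization and is essentially sound: defining the winning vectors as the up-set generated by the rows of $\mathcal{M}$, checking monotonicity and nontriviality, showing that replacing a voter of a later class by one of an earlier class weakly increases all partial sums (hence $\sqsupset$ is total), using (iii) to separate consecutive classes, and using (ii) to identify the rows with the shift-minimal winning vectors is exactly how this is done. Two small points of order and rigor: in the class-separation step you assert that the vector obtained from $\widetilde{m}_i$ by moving one unit from coordinate $j$ to $j+1$ is ``$\prec\widetilde{m}_i$ and hence losing'' before you have justified that $\prec$-smaller vectors than a row are losing --- the justification (any such vector dominating a row $\widetilde{m}_k$ would force $\widetilde{m}_i\succ\widetilde{m}_k$, contradicting (ii), or contradict $\widetilde{m}'\prec\widetilde{m}_i$ if $k=i$) only appears a sentence later and should precede its first use. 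Second, to conclude that the $\square$-equivalence classes are \emph{exactly} $N_1,\dots,N_t$ rather than a coarsening, you implicitly use that any merged class would have to contain two consecutive $N_j,N_{j+1}$ (so that the consecutive-class witnesses from (iii) suffice); this relies on transitivity of Isbell's relation and on the classes being intervals in the total preorder, which is true but worth a sentence. Neither issue is a real gap, and part (b) is the expected uniqueness argument.
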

The rows of $\mathcal{M}$ correspond to the shift-minimal winning vectors whose number is denoted by $r$. The
number of equivalence classes of voters is denoted by $t$.

By $\s_\mathcal{C}(n):=\{\max \mu(\chi)\mid \chi\in\mathcal{C}_n\}$ we denote the largest critical threshold value 
within the class of complete simple games on $n$ voters. As $\overline{\mathcal{W}}\subseteq \mathcal{W}$ and
$\overline{\mathcal{L}}\subseteq\mathcal{L}$ we want to provide a linear programming formulation for the critical
threshold value $\mu(\chi)$ of a complete simple game $\chi\in\mathcal{C}_n$, similar to
Proposition~\ref{prop_lp_critical_alpha_simple_game}, based on shift-minimal winning and shift-maximal losing vectors.
At first, we show that we can further restrict the set of weights. To this end we call a feasible solution $w$ of the
inequality system in Proposition~\ref{prop_lp_critical_alpha_simple_game}, where $\alpha$ is given, a 
\emph{representation} (with respect to $\alpha$).

\begin{lemma}
  All complete simple games $\chi\in \ar\cap\mathcal{C}_n$ admit a representation with weights satisfying
  $w_1\ge\dots\ge w_n\ge 0$.
\end{lemma}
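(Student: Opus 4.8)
The plan is to start from a non-negative representation, which exists by Lemma~\ref{lemma_non_negative_weights_simple_game}, and then to sort the weights by a finite sequence of transpositions that respect Isbell's desirability relation. Recall that, by our labelling convention, $i\sqsupset j$ holds whenever $i<j$. Hence if the current non-negative weights fail to be non-increasing, there is an index $i$ with $w_i<w_{i+1}$, and since $i\sqsupset i+1$ we are entitled to exchange these two weights. The heart of the argument is to verify that such an exchange yields again a representation with respect to the \emph{same} value of $\alpha$; since we only permute the original non-negative entries, non-negativity is preserved automatically.

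First I would fix the transposition that swaps $w_i$ and $w_{i+1}$, i.e.\ I set $w'_i=w_{i+1}$, $w'_{i+1}=w_i$ and $w'_k=w_k$ otherwise, and observe that $w'(S)=w(S)$ for every coalition $S$ containing both or neither of $i,i+1$; only coalitions containing exactly one of the two voters are affected. For such a coalition I would introduce the paired coalition $S^*:=(S\setminus\{i,i+1\})\cup(\{i,i+1\}\setminus S)$ obtained by interchanging the membership of $i$ and $i+1$, and record that $w'(S)=w(S^*)$ holds by construction. Since all weights stay non-negative, it suffices to check the constraints $w'(S)\ge 1$ over winning and $w'(S)\le\alpha$ over losing coalitions.

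The key step is then to invoke the desirability relation to control the status of $S^*$. If $T$ is a losing coalition with $i\in T$ and $i+1\notin T$, then applying $i\sqsupset i+1$ to $S=T^*$ gives $\chi(T)\ge\chi(T^*)$, so $T^*$ is losing and $w'(T)=w(T^*)\le\alpha$. Symmetrically, if $W$ is a winning coalition with $i+1\in W$ and $i\notin W$, then $i\sqsupset i+1$ applied to $S=W$ yields $\chi(W^*)\ge\chi(W)=1$, so $W^*$ is winning and $w'(W)=w(W^*)\ge 1$. The remaining affected coalitions move only in the harmless direction: an affected winning coalition containing $i$ has its weight increased (still $\ge 1$), while an affected losing coalition containing $i+1$ has its weight decreased (still $\le\alpha$). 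Thus $w'$ is again a representation with respect to the same $\alpha$.

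Finally I would argue termination by a standard sorting estimate: each such transposition strictly decreases the number of inversions $\#\{(k,\ell)\,:\,k<\ell,\ w_k<w_\ell\}$, a non-negative integer, so after finitely many exchanges we reach weights with $w_1\ge\dots\ge w_n\ge 0$, still representing $\chi$ with respect to its critical value. The main obstacle is precisely the feasibility-preservation step, namely setting up the pairing $S\mapsto S^*$ correctly and applying $i\sqsupset i+1$ in the two relevant directions; once this is established, the rest is a routine bubble-sort argument.
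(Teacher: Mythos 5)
Your proposal is correct and follows essentially the same route as the paper: start from a non-negative representation (Lemma~\ref{lemma_non_negative_weights_simple_game}), swap a mis-ordered pair of weights, and use the convention $i\sqsupset j$ for $i<j$ to show that the partner coalition obtained by exchanging the two voters has the same winning/losing status, so feasibility with respect to the same $\alpha$ is preserved. The only cosmetic difference is that you restrict to adjacent transpositions and terminate via an inversion count, whereas the paper swaps the lexicographically smallest offending pair and appeals to recursion; both are sound.
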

\begin{proof}
  As $\chi\in\mathcal{C}_n\subseteq\mathcal{S}_n$ is a simple game, there exists a representation with
  weights $w_1',\dots,w_n'\in\mathbb{R}_{\ge 0}$ due to Lemma~\ref{lemma_non_negative_weights_simple_game}. Let
  $(j,h)$ be the lexicographically smallest pair such that $w_j'<w_h'$ and $j<h$. By $\tau$ we denote the
  transposition $(j,h)$, i.e.\ the permutation that swaps $j$ and $h$, and set $w_i:=w_{\tau(i)}'$.
  
  For a winning coalition $S$ with $j\in S$, $h\notin S$ we have $w(S)\ge w'(S)\ge 1$. If $S$ is a winning
  coalition with $j\notin S$, $h\in S$ then $\tau(S)$ is a winning coalition too and we have $w(S)=w'(\tau(S))\ge 1$.
  For a losing coalition $T$ with $j\notin T$, $h\in T$ we have $w(T)\le w'(T)\le\alpha$. If $T$ is a losing
  coalition with $j\in T$, $h\notin T$ then $\tau(T)$ is a losing coalition too and we have $w(T)=w'(\tau(T))\le \alpha$.
  
  By recursively applying this argument we can construct representing weights fulfilling
  $w_1\ge\dots\ge w_n\ge 0$.
\end{proof}

We remark that the previous complete simple game with minimal winning coalitions $\{1,2\}$, $\{1,3\}$, $\{1,4\}$,
and $\{2,3,4\}$ can be represented as a weighted voting game $[4;3,2,1,1]$. Another representation of the same game
using equal weights for equivalent voters would be $[3;2,1,1,1]$.

\begin{lemma}
  \label{lemma_weights_csg}
  All complete simple games $\chi\in \ar\cap\mathcal{C}_n$ admit a representation with weights  $w_1\ge\dots\ge w_n\ge 0$
  where voters of the same equivalence class have the same weight.
\end{lemma}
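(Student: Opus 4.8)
The plan is to start from the representation guaranteed by the previous lemma and to symmetrize it within each equivalence class by averaging. Concretely, let $w$ be a representation of $\chi$ with respect to $\alpha=\mu(\chi)$ satisfying $w_1\ge\dots\ge w_n\ge 0$. Since $\chi$ is complete, the equivalence classes $N_1,\dots,N_t$ consist of voters that are pairwise interchangeable: swapping two voters from the same class $N_h$ is an automorphism of $\chi$. Let $G=\mathrm{Sym}(N_1)\times\dots\times\mathrm{Sym}(N_t)$ be the group of permutations of $N$ preserving each equivalence class; every $\sigma\in G$ maps winning coalitions to winning coalitions and losing coalitions to losing coalitions.

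The first step is to observe that each permuted weight vector is again feasible. For $\sigma\in G$ set $w^\sigma_i:=w_{\sigma(i)}$; then for any coalition $S$ we have $w^\sigma(S)=w(\sigma(S))$, and since $\sigma$ preserves the winning/losing status, $w^\sigma$ satisfies all constraints of Proposition~\ref{prop_lp_critical_alpha_simple_game} with the same $\alpha$. The second step is to average: put $\overline{w}:=\frac{1}{|G|}\sum_{\sigma\in G}w^\sigma$. As the constraints in Proposition~\ref{prop_lp_critical_alpha_simple_game} are linear and hence define a convex set, $\overline{w}$ is again a representation with respect to $\alpha$.

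The third step is to read off the two desired properties of $\overline{w}$. For a fixed voter $i\in N_h$, as $\sigma$ ranges over $G$ the image $\sigma(i)$ runs through every voter of $N_h$ equally often, so $\overline{w}_i=\frac{1}{n_h}\sum_{j\in N_h}w_j$ depends only on $h$; thus voters of the same equivalence class receive equal weight. For the ordering, recall that under the labelling convention $1\sqsupset 2\sqsupset\dots$ the classes are the consecutive blocks $N_1=\{1,\dots,n_1\}$, $N_2=\{n_1+1,\dots\}$, and so on; since $w_1\ge\dots\ge w_n$, every weight in $N_h$ dominates every weight in $N_{h+1}$, whence the block averages are non-increasing and $\overline{w}_1\ge\dots\ge\overline{w}_n\ge 0$.

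The argument is essentially routine once the symmetrization is set up; the only point requiring care is the first step, namely that permuting within equivalence classes really fixes the game, which is exactly the content of completeness together with the definition of $N_1,\dots,N_t$, and the verification that averaging respects the ordering, which uses that the equivalence classes are consecutive intervals in the chosen labelling. A purely computational alternative to the convexity argument would be to note that, for a fixed coalition profile $(a_1,\dots,a_t)$, the average of $w(S)$ over all $\prod_h\binom{n_h}{a_h}$ coalitions with that profile equals $\sum_h a_h\cdot\frac{1}{n_h}\sum_{j\in N_h}w_j$, while all such $S$ share the same winning/losing status; this yields feasibility of the block-averaged weights directly.
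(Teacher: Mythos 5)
Your proof is correct and rests on the same idea as the paper's: replace the weights within each equivalence class by their arithmetic mean, exploiting the interchangeability of equivalent voters, and observe that the block averages inherit the ordering $w_1\ge\dots\ge w_n\ge 0$. The only cosmetic difference is in how feasibility of the averaged weights is checked --- the paper averages one class $N_j$ at a time and compares a coalition $S$ directly with the coalition obtained by exchanging $S\cap N_j$ for the $k$ lightest (resp.\ heaviest) voters of $N_j$, while you do all classes at once via convexity of the LP feasible set under the automorphism group $\mathrm{Sym}(N_1)\times\dots\times\mathrm{Sym}(N_t)$ --- but both verifications use exactly the same underlying fact.
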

\begin{proof}
  Let $w_1'\ge\dots\ge w_n'\ge 0$ be a representation of $\chi$ and $N_1,\dots,N_t$ the set of equivalence
  classes of voters. By $1\le j\le t$ we denote the smallest index such that not all voters in $N_j$ have the same weight
  and define new weights $w_i:=w_i'$ for all $i\in N\backslash N_j$ and $w_i:=\frac{\sum\limits_{h\in N_j} w_h'}{|N_j|}$,
  i.e.\ the arithmetic mean of the previous weights in $N_j$. By recursively applying this construction  we obtain a
  representation with the desired properties. It remains to show that the new weights $w_i$ fulfill the $\alpha$-conditions.
  
  Let $S$ be a winning coalition with $k=\left|S\cap N_j\right|$. By $S'$ we denote the union of $S\backslash N_j$ and
  the $k$ lightest voters from $N_j$. Since $S'$ is a winning coalition too we have $w(S)\ge w'(S')\ge 1$. Similarly,
  let $T$ be a losing coalition with $k=\left|T\cap N_j\right|$: By $T'$ we denote the union of $T\backslash N_j$ and
  the $k$ heaviest voters from $N_j$. Since $T'$ is also a losing coalition we have $w(T)\le w'(T')\le\alpha$.
\end{proof}

\begin{lemma}
  \label{lemma_lp_critical_alpha_complete_simple_game}
  The critical threshold value $\mu(\chi)$ of a complete simple game $\chi\in\mathcal{C}_n$
  with $t$ equivalence classes of voters is given by the optimal target value of the following linear program:
  $$
    \begin{array}{rl}
      \text{Min} & \alpha  \\
       & \sum\limits_{i=1}^t a_iw_i \ge 1       \quad\forall (a_1,\cdots,a_t)\in\overline{\mathcal{W}} \\
       & \sum\limits_{i=1}^t a_iw_i \le \alpha  \quad\forall (a_1,\cdots,a_t)\in\overline{\mathcal{L}} \\
       & \alpha\ge 1\\
       & w_i\ge w_{i+1}\quad\forall 1\le i\le t-1\\
       & w_t\ge 0
    \end{array}
  $$
\end{lemma}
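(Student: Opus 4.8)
The plan is to reduce the stated linear program to the one in Proposition~\ref{prop_lp_critical_alpha_simple_game}, whose optimum is already known to equal $\mu(\chi)$, by combining the representation guaranteed in Lemma~\ref{lemma_weights_csg} with the compatibility of the weight function with the order $\preceq$. Concretely, I would prove the two inequalities ``stated optimum $\le\mu(\chi)$'' and ``stated optimum $\ge\mu(\chi)$'' separately.

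For the first inequality I would invoke Lemma~\ref{lemma_weights_csg}: an optimal representation of $\chi$ attaining $\alpha=\mu(\chi)$ may be chosen with $w_1\ge\dots\ge w_n\ge 0$ and with equal weights inside each equivalence class $N_i$. Collapsing the common weight of class $N_i$ to a single variable $w_i$, the weight of a coalition with vector $(a_1,\dots,a_t)$ becomes $\sum_{i=1}^t a_iw_i$ and the ordering becomes $w_1\ge\dots\ge w_t\ge 0$. Since $\overline{\mathcal{W}}\subseteq\mathcal{W}$ and $\overline{\mathcal{L}}\subseteq\mathcal{L}$, this collapsed vector satisfies in particular the constraints of the stated program, so the stated optimum is at most $\mu(\chi)$.

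For the reverse inequality I would show that every feasible solution $(w_1,\dots,w_t,\alpha)$ of the stated program, expanded back to per-voter weights by assigning $w_i$ to each voter of $N_i$, is feasible for the program of Proposition~\ref{prop_lp_critical_alpha_simple_game}; this forces the stated optimum to be at least $\mu(\chi)$. The crux is the monotonicity claim: if $\widetilde{a}\preceq\widetilde{b}$ and $w_1\ge\dots\ge w_t\ge 0$, then $\sum_i a_iw_i\le\sum_i b_iw_i$. I would prove this by summation by parts: writing $A_k=\sum_{i=1}^k a_i$ and $B_k=\sum_{i=1}^k b_i$, one obtains $\sum_{i=1}^t a_iw_i=\sum_{i=1}^{t-1}A_i(w_i-w_{i+1})+A_tw_t$, and since $w_i-w_{i+1}\ge 0$, $w_t\ge 0$, and $A_k\le B_k$ for all $k$ by definition of $\preceq$, the claim follows term by term.

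With monotonicity in hand the reduction is exactly parallel to the proof of Proposition~\ref{prop_lp_critical_alpha_simple_game}. The winning vectors of a complete simple game form an up-set and the losing vectors a down-set with respect to $\preceq$ (the structural fact underlying the reconstruction of $\chi$ from $\overline{\mathcal{W}}$ or $\overline{\mathcal{L}}$), so every winning vector $\widetilde{a}$ satisfies $\widetilde{m}\preceq\widetilde{a}$ for some $\widetilde{m}\in\overline{\mathcal{W}}$, and every losing vector $\widetilde{b}$ satisfies $\widetilde{b}\preceq\widetilde{M}$ for some $\widetilde{M}\in\overline{\mathcal{L}}$. Monotonicity then yields $w(\widetilde{a})\ge w(\widetilde{m})\ge 1$ for every winning coalition and $w(\widetilde{b})\le w(\widetilde{M})\le\alpha$ for every losing coalition, which is full feasibility for Proposition~\ref{prop_lp_critical_alpha_simple_game}. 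The step I expect to require the most care is the monotonicity claim, in particular making sure that $\preceq$ is precisely the order along which the class weights are non-increasing; once the summation-by-parts identity is set up correctly the remainder is routine.
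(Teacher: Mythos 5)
Your proposal is correct and follows essentially the same route as the paper: one direction via Lemma~\ref{lemma_weights_csg} (collapsing to per-class, non-increasing weights), and the other by showing that any feasible solution of the reduced program already bounds all winning and losing vectors through comparison with shift-minimal winning and shift-maximal losing vectors. The only difference is that you make explicit, via the Abel-summation identity, the monotonicity of $\widetilde{a}\mapsto\sum_i a_iw_i$ along $\preceq$ for non-increasing non-negative weights, a step the paper's proof leaves as a one-line ``check''.
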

\begin{proof}
  Due to Lemma~\ref{lemma_weights_csg} we can assume that for the critical threshold value $\mu(\chi)=\alpha$
  there exists a feasible weighting fulfilling the conditions of the stated linear program. It remains to show
  that $w(W)\ge 1$ and $w(L)\le \alpha$ holds for all shift-winning vectors $W$ and all losing vectors $L$. Therefore,
  we denote by $W'\in\overline{\mathcal{W}}$ an arbitrary shift-minimal winning vector with $W\succeq W'$
  and by $L'\in\overline{\mathcal{L}}$ an arbitrary shift-maximal losing vector with $L\preceq L'$. The proof
  is finished by checking $w(L)\le w(L')\le \alpha$ and $w(W)\ge w(W')\ge 1$.
\end{proof}

So, for complete simple games the number of constraints could be further reduced. In this context we remark
that by additionally disregarding the conditions $w_i\ge w_{i+1}$ from the linear program we would lose the
information about the order on equivalence classes. This effect is demonstrated  by the following example.
Let us consider the complete simple game $(n_1,n_2)=(15,4)$ with unique shift-minimal winning vector $(7,2)$.
There are two shift-maximal losing vectors: $(8,0)$ and $(6,4)$. Choosing the special solution
$w_1=\frac{1}{14}$, $w_2=\frac{1}{4}$, $\alpha=\frac{3}{2}$ would be feasible for
\begin{eqnarray*}
  7w_1+2w_2\ge 1\\
  8w_1\le \alpha\\
  6w_1+4w_2\le \alpha\\
  \alpha\ge 1\\
  w_1,w_2\ge 0
\end{eqnarray*}
For the coalition $(8,1)$ we obtain the weight $8w_1+1w_2=\frac{23}{28}<1$, so that it should be a losing coalition,
which is a contradiction to $(8,1)\succeq(7,2)$. So we have to use the ordering on the weights. 

At the beginning of this section we have argued that the condition $\alpha\ge 1$ is necessary, since
otherwise the optimal target value of the stated linear programming formulations will not coincide with
$\mu(\chi)$ in all cases. On the other hand, if $z^\star(\chi)$ denotes the optimal target value of one of
the stated LPs, where we have dropped the condition $\alpha\ge 1$, then we have
$$
  \mu(\chi)=\max\!\left(z^\star(\chi),1\right).
$$
In the following we will drop the condition $\alpha\ge 1$ whenever it seems beneficial for the ease of a shorter
presentation while having the just mentioned exact correspondence in mind.


\bigskip

An important solution concept in cooperative game theory is the \emph{core}, i.e.\ the set of all stable imputations,
see e.g.\ \cite{1229.91004} for an introduction. Since the core can be empty under certain circumstances, the possibility
of external payments was considered in order to stabilize the outcome, see \cite{Bachrach:2009:CSC:1692490.1692502}. The
external party quite naturally is interested in minimizing its expenditures. This leads to the concept of the
\emph{cost of stability} ($CoS$) of a coalition game. Skipping the relation of $CoS$ with the core, we directly define
the cost of stability $CoS(f)$ of a given Boolean function $f$ with $f(\emptyset)=0$ as the solution of the following
linear program:
\begin{eqnarray}
  \text{Min} && \Delta  \\
  \Delta &\ge& 0\\
  \sum_{i\in N} p_i &=& f(N)+\Delta\label{ie_cos_normalization}\\
  \sum_{i\in S} p_i &\ge& f(S)\quad\forall S\subseteq N\label{ie_cos_lower_bound_coalition}\\
  p_i&\ge&0\,\,\,\,\quad\quad\forall i\in N\label{ie_cos_positive}.
\end{eqnarray}
The cost of stability is an upper bound for the critical threshold value:
\begin{lemma}
  \label{lemma_cos}
  For a Boolean function $f\in\mathcal{B}_n$ with $f(N)=1$ we have
  $\mu(f)\le 1+CoS(f)$.
\end{lemma}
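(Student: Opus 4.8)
The plan is to convert an optimal solution of the cost-of-stability LP directly into a feasible solution of the linear program~(\ref{lp_critical_alpha_value}) that defines $\mu(f)$. First I would take an optimal payment vector $(p_1,\dots,p_n)$ together with its corresponding value $\Delta=CoS(f)\ge 0$, and simply reuse the payments as weights, i.e.\ set $w_i:=p_i$ for all $i\in N$ and $\alpha:=1+CoS(f)$.

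Next I would verify the two families of constraints. For the winning coalitions this is immediate: if $f(S)=1$, then inequality~(\ref{ie_cos_lower_bound_coalition}) gives $w(S)=\sum_{i\in S}p_i\ge f(S)=1$, which is exactly the required lower bound. For the losing coalitions the key observation is that nonnegativity of the payments, together with the normalization~(\ref{ie_cos_normalization}) and the hypothesis $f(N)=1$, bounds the weight of \emph{every} coalition from above by the total payment: for any $S\subseteq N$ we have
$$
  w(S)=\sum_{i\in S}p_i\le\sum_{i\in N}p_i=f(N)+\Delta=1+CoS(f)=\alpha,
$$
the first inequality using $p_i\ge 0$ from~(\ref{ie_cos_positive}). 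In particular this holds for all losing coalitions, so the upper-bound constraints are satisfied.

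Finally I would check the normalizing requirement $\alpha\ge 1$, which follows at once from $CoS(f)\ge 0$ (guaranteed by $\Delta\ge 0$). Hence $(w,\alpha)$ is feasible for~(\ref{lp_critical_alpha_value}), and since $\mu(f)$ is the minimal attainable target value of that program, we conclude $\mu(f)\le\alpha=1+CoS(f)$. I do not expect a genuine obstacle here; the only point requiring care is that the upper bound for losing coalitions is obtained not from a coalition-by-coalition estimate but uniformly from the single global payment constraint, which is precisely where the additive offset ``$1+{}$'' in the stated bound comes from. Note also that monotonicity of $f$ is never used, so the argument applies to all of $\mathcal{B}_n$ with $f(N)=1$.
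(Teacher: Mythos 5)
Your proposal is correct and follows exactly the paper's argument: reuse the optimal payments as weights, derive the winning-coalition bound from constraint~(\ref{ie_cos_lower_bound_coalition}), and bound every coalition's weight by the total payment $f(N)+\Delta=1+CoS(f)$ using nonnegativity and the normalization constraint. No differences worth noting.
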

\begin{proof}
   Let $p_1,\dots, p_n$, $\Delta$ be an optimal solution for the above linear program for the cost of stability. If
   we choose the weights as $w_i=p_i$, then we have $w_i\in\mathbb{R}$ and we have $w(S)\ge 1$ for all winning
   coalitions $S$ due to constraint~(\ref{ie_cos_lower_bound_coalition}). Applying constraint~(\ref{ie_cos_positive})
   and constraint~(\ref{ie_cos_normalization}) yields $$w(S)=\sum_{i\in S}p_i\le \sum_{i\in N} p_i=f(N)+\Delta=1+CoS(f)$$
   for all coalitions $S\subseteq N$. Thus every losing coalition has a weight of at most $1+CoS(f)$.
\end{proof}

Due to $CoS(f)\le n\cdot\max_{S\subseteq N} f(S)\le n$, see Theorem 3.4 in \cite{Bachrach:2009:CSC:1692490.1692502}, we
have $CoS(f)\le n$ for all $f\in\mathcal{B}_n$, where equality is attained for the Boolean function with $f(S)=1$ for
all $S\neq\emptyset$. With respect to Lemma~\ref{lemma_s_boolean} we mention the relation
$$
  c_\mathcal{B}(n)=\max_{f\in\mathcal{B}_n} \mu(f)=\max_{f\in\mathcal{B}_n} CoS(f)=n.
$$
On the other hand, we observe that the ratio between $CoS(f)$ and $\mu(f)$ can be quite large. Theorem 3.3 in
\cite{Bachrach:2009:CSC:1692490.1692502} states $CoS(\chi)=\frac{n}{\lceil q\rceil} -1$ for the weighted voting game
$\chi=[q;w,\dots,w]$, while we have $\mu(\chi)=1$. Setting $w=q=1$ we see that the ration can become at least as
large as $n-1$.

By imposing more structure on the set of feasible games, the bound $CoS(f)\le n$, for $f\in\mathcal{B}_n$, could
be reduced significantly. To this end we introduce further notation:
\begin{definition}
  \label{def_super_additive}
  A Boolean function $f\in\mathcal{B}_n$ is called \emph{super-additive} if we have $f(S)+f(T)\le f(S\cup T)$ for
  all disjoint coalitions $S,T\subseteq N$. It is called \emph{anonymous} if we have $f(S)=f(T)$ for all coalitions
  $S,T\subseteq N$ with $|S|=|T|$, i.e.\ the outcome only depends on the cardinality of the coalition.
\end{definition}
In our context super-additivity means that each pair of winning coalitions has a non-empty intersection, which is
also called a \emph{proper} game. These are the most used voting games for real world institutions.

\section{Certificates}
\label{sec_certificates}

\noindent
In computer science, more precisely in complexity theory, a certificate is a string that certifies the answer to
a membership question (or the optimality of a computed solution). In our context we e.g.\ want to know whether a
given simple game $\chi\in\mathcal{S}_n$ is $\alpha$-roughly weighted. If the answer is yes, we just need to state
suitable weights. Given the weights, the answer then can be checked by testing the validity of the inequalities in
the linear program of Proposition~\ref{prop_lp_critical_alpha_simple_game}. Since both $\mathcal{W}$ and $\mathcal{L}$
form antichains, i.e.\ no element is contained in another, we can conclude from Sperner's theorem that at most
$2{{n} \choose {\lfloor n/2\rfloor}}+n+1$ inequalities have to be checked. But also in the other case, where the
answer is no, we would like to have a computational witness that $\chi$ is not $\alpha$-roughly weighted.

For weighted voting games trading transforms, see e.g.\ \cite{0943.91005}, can serve as a certificate for
non-weightedness. In \cite{Gvozdeva201120} this concept has been transfered to roughly weighted games and it was
proven that for each non-weighted simple game consisting of $n$ voters there exists a trading transform
of length at most $\left\lfloor(n+1)\cdot 2^{\frac{1}{2}n\log_2 n}\right\rfloor$.

Using the concept of duality in linear programming one can easily construct a certificate for the fact that a given 
voting structure $\chi$ is not $\alpha'$-roughly weighed for all $\alpha'<\alpha$, where $\alpha\ge 1 $ is fixed.
To be more precise, we present a certificate for the inequality $\mu(\chi)\ge\alpha$.  

The dual of a general linear program $\min c^Tx, Ax\ge b, x\ge 0$ (called primal) is given by $\max b^Ty, A^Ty\le c, y\ge 0$.
The strong duality theorem, see e.g.\ \cite{vanderbei}, states that if the primal has an optimal solution, $x^\star$,
then the dual also has an optimal solution, $y^\star$, such that $c^Tx^\star=b^Ty^\star$. As mentioned before, the
linear program for the determination of the critical threshold value always has an optimal solution, so that we can
apply the strong duality theorem to obtain a certificate.

Considering only a subset of the winning coalitions for the determination of the critical threshold value means removing some
constraints of the corresponding linear program. This enlarges the feasible set such that the optimal solution will eventually
decrease but not increase. For further utilization we state the resulting lower bound for the critical threshold value of this
approach:
\begin{lemma}
  \label{lemma_lower_bound_critical_alpha_value_simple_games}
  For a given simple game $\chi\in\mathcal{S}_n$ let $W'$ be a subset of its winning coalitions and $L'$ be a subset of
  its losing coalitions. If $(u,v)$ is a feasible solution of the following linear program with target value $\alpha'$ then
  we have $\mu(\chi)\ge\alpha'$.
  $$
    \begin{array}{rl}
      \text{Max} & \sum\limits_{S\in W'} u_S \\
      & \sum\limits_{S\in W':i\in S} u_S\,-\,\sum\limits_{T\in L':i\in T} v_T \le 0\quad\forall 1\le i\le n\\
      & \sum\limits_{T\in L'} v_T \le 1\\
      & u_S\ge 0\quad\forall S\in W'\\
      & v_T\ge 0\quad\forall T\in L'
    \end{array}
  $$
\end{lemma}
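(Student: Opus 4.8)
The plan is to derive the stated lower bound directly from linear programming duality, which the preceding paragraph explicitly sets up as the intended tool. The key observation is that the maximization LP in the statement is, up to the treatment of the $\alpha \ge 1$ constraint, precisely the dual of the primal LP in Proposition~\ref{prop_lp_critical_alpha_simple_game} when that primal is restricted to the coalition subsets $W'$ and $L'$. So the first step is to write down that restricted primal explicitly: minimize $\alpha$ subject to $w(S) \ge 1$ for all $S \in W'$, $w(T) \le \alpha$ for all $T \in L'$, and $w_i \ge 0$ (dropping $\alpha \ge 1$, as licensed in the remark following Lemma~\ref{lemma_lp_critical_alpha_complete_simple_game}).

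\emph{Second}, I would verify that the given maximization LP is the correct dual of this restricted primal. Putting the primal in standard form with variables $w_1,\dots,w_n \ge 0$ and $\alpha$ (free, or equivalently one introduces $\alpha$ via the constraints $w(T) - \alpha \le 0$), one assigns a dual multiplier $u_S \ge 0$ to each winning constraint $w(S) \ge 1$ and a multiplier $v_T \ge 0$ to each losing constraint $\alpha - w(T) \ge 0$. Collecting the coefficient of each $w_i$ across all constraints yields $\sum_{S \in W' : i \in S} u_S - \sum_{T \in L' : i \in T} v_T$, and complementary to the nonnegative variable $w_i$ this must be $\le 0$ — exactly the first family of dual constraints. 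The coefficient of $\alpha$ (appearing with $+1$ in each losing constraint and with a $-1$ objective coefficient, or handled as a free variable giving an equality) produces $\sum_{T \in L'} v_T \le 1$. The dual objective, summing $1 \cdot u_S$ over the winning constraints, is $\sum_{S \in W'} u_S$. This matches the stated LP exactly.

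\emph{Third}, I would invoke weak duality. Any feasible $(u,v)$ for the dual gives an objective value $\alpha' = \sum_{S \in W'} u_S$ that is a lower bound for the optimal value of the restricted primal. \emph{Finally}, I would chain two monotonicity observations: the optimal value of the \emph{restricted} primal is at most that of the \emph{full} primal of Proposition~\ref{prop_lp_critical_alpha_simple_game}, because dropping constraints (using only $W' \subseteq \mathcal{W}$ and $L' \subseteq \mathcal{L}$, or indeed any subsets of winning/losing coalitions) enlarges the feasible region and so cannot increase a minimization optimum — this is precisely the point made in the paragraph preceding the lemma. Combining, $\alpha' \le z^\star_{\mathrm{restricted}} \le z^\star_{\mathrm{full}} \le \mu(\chi)$, where the last inequality uses $\mu(\chi) = \max(z^\star, 1) \ge z^\star$. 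Hence $\mu(\chi) \ge \alpha'$, as claimed.

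The only genuinely delicate point — \textbf{the main obstacle} — is the bookkeeping around the dropped $\alpha \ge 1$ constraint and the free versus sign-constrained status of $\alpha$. One must confirm that omitting $\alpha \ge 1$ does not break the final inequality: since $\mu(\chi) = \max(z^\star,1) \ge z^\star \ge \alpha'$, the bound survives regardless, so no harm is done. I would also be careful that the restriction to subsets $W', L'$ of \emph{arbitrary} winning and losing coalitions (not necessarily minimal winning or maximal losing) is legitimate; this is fine because Proposition~\ref{prop_lp_critical_alpha_simple_game} already shows the full primal over $\mathcal{W}, \mathcal{L}$ computes $\mu(\chi)$, and adding back any further winning/losing constraints only tightens the minimum, so the full-coalition LP has optimum $\mu(\chi)$ and dominates the restricted one. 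Beyond this, the argument is a routine application of weak duality and feasible-region monotonicity.
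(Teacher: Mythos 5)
Your proposal is correct and follows essentially the same route as the paper: identify the stated maximization LP as the dual of the primal restricted to $W'$ and $L'$, apply weak duality, and observe that the restricted primal is a relaxation of the full LP determining $\mu(\chi)$. Your extra care with the dropped $\alpha\ge 1$ constraint (via $\mu(\chi)=\max(z^\star,1)\ge z^\star$) only makes explicit what the paper leaves implicit.
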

\begin{proof}
  The stated linear program is the dual of
  $$
    \begin{array}{rl}
      \text{Min} & \alpha \\
      & \sum\limits_{i\in S} w_i\ge 1\quad \forall S\in W'\\
      & \alpha-\sum\limits_{i\in T} w_i \ge 0\quad\forall T\in L'\\
    \end{array}  
  $$
  $$
    \begin{array}{rl}
      & w_i\ge 0\quad \forall 1\le i\le n,
    \end{array}
  $$
  which is a relaxation of the linear program~(\ref{lp_critical_alpha_value}) determining the critical
  threshold value.
\end{proof}

To briefly motivate the underlying ideas we consider an example. Let the simple game~$\chi$ for $5$~voters be defined 
by its set
$\Big\{\{1,2\},\{2,4\},\{3,4\},\{2,5\},$ $\{3,5\}\Big\}$ of minimal winning coalitions. The set of maximal losing
coalitions is given by
$\Big\{\{1,3\},\{2,3\},\{1,4,5\}\Big\}$.  For this example the linear program of
Proposition~\ref{prop_lp_critical_alpha_simple_game} to determine the critical $\alpha$ (after some easy
equivalence transformations) reads as
$$
\begin{array}{rl}
  \text{Min} & \alpha\quad\quad\text{s.t.}  \\
   & w_1+w_2 \ge 1\\
   & w_2+w_4 \ge 1\\
   & w_3+w_4 \ge 1\\
   & w_2+w_5 \ge 1\\
   & w_3+w_5 \ge 1\\
   & \alpha-w_1-w_3 \ge 0\\
   & \alpha-w_2-w_3 \ge 0\\
   & \alpha-w_1-w_4-w_5 \ge 0
\end{array}
$$
$$
\begin{array}{rl}  
   & \alpha\ge 1\\
   & w_1 \ge 0, \dots, w_5 \ge 0
\end{array}
$$
(We have replaced the conditions $w(S)\le \alpha$ for the losing coalitions $S$ by $\alpha-w(S)\ge 0$.)

Running a linear program solver yields the optimal solution $w_1=w_4=w_5=\frac{2}{5}$, $w_2=w_3=\frac{3}{5}$,
and $\alpha=\frac{6}{5}$. By inserting these values into the inequalities of the stated linear program we can check that 
$\chi\in\mathcal{T}_{\frac{6}{5}}\cap\mathcal{S}_5$. Thus the weights form a certificate for this fact.


To obtain a certificate for the fact that $\chi\notin\mathcal{T}_{\alpha'}$ for all
$\alpha'<\frac{6}{5}$, i.e.\ $\mu(\chi)\ge\frac{6}{5}$, we consider the dual problem:
$$
\begin{array}{rl}
  \text{Max} & y_1+y_2+y_3+y_4+y_5+z\quad\quad\text{s.t.}  \\
   & y_1-y_6-y_8 \le 0 \\
   & y_1+y_2+y_4-y_7 \le 0\\
   & y_3+y_5-y_7 \le 0\\
   & y_2+y_3-y_8 \le 0\\
   & y_4+y_5-y_8 \le 0\\
   & y_6+y_7+y_8+z \le 1 \\
   & y_1 \ge 0, \dots, y_8,z \ge 0
\end{array}
$$
An optimal solution is given by $y_1=y_5=y_8=\frac{2}{5}$, $y_2=y_3=\frac{1}{5}$, $y_7=\frac{3}{5}$, and $y_4=y_6=z=0$
with target value $\frac{6}{5}$ (as expected using the strong duality theorem). In combination with the weak duality
theorem, see e.g.\ \cite{vanderbei}, the stated feasible dual solution $(y,z)$  is a certificate for the fact that the
critical threshold value for the simple game $\chi$ is larger or equal to $\frac{6}{5}$. In general, the optimal
solution vector $(y,z)$ has at most $n+1$ non-zero entries so that we obtain a very short certificate.

We would like to remark that one can use the values of the dual variables as multipliers for the inequalities in
the primal problem to obtain the desired bound on the critical threshold value. In our case multiplying all
inequalities with the respective values yields
\begin{eqnarray*}
  &&
  \frac{2}{5}\cdot\left(w_1+w_2\right)
  +\frac{1}{5}\cdot\left(w_2+w_4\right)
  +\frac{1}{5}\cdot\left(w_3+w_4\right)
  +0\cdot\left(w_2+w_5\right)
  +\frac{2}{5}\cdot\left(w_3+w_5\right)\\
  &&
  +0\cdot\left(\alpha-w_1-w_3\right)
  +\frac{3}{5}\cdot\left(\alpha-w_2-w_3\right)
  +\frac{2}{5}\cdot\left(\alpha-w_1-w_4-w_5\right)
  +0\cdot \alpha\\
  &&
  \ge \frac{2}{5}+\frac{1}{5}+\frac{1}{5}+0+\frac{2}{5}+0=\frac{6}{5}
\end{eqnarray*}
which is equivalent to $\alpha\ge \frac{6}{5}$, i.e.\ a certificate for the fact that
$\chi\notin\mathcal{T}_{\alpha'}\cap\mathcal{S}_5$ for $\alpha'<\frac{6}{5}$.

\section{Maximal critical threshold values}
\label{sec_maximal_critical_alpha}

\noindent
In Lemma~\ref{lemma_s_boolean} we have shown that the maximum critical threshold value of a Boolean function
$f:2^N\rightarrow\{0,1\}$ with $f(\emptyset)=0$ is given by $\s_\mathcal{B}(n)=n$. If additionally $f(N)=1$ is
required the upper bound drops to $n-1$ (which is tight). In this section, we want to provide bounds for the
maximal critical threshold values for simple games and complete simple games on $n$ voters. By considering a
complete simple game with two types of voters we can derive a lower bound of $\Omega(\sqrt{n})$ for
$c_\mathcal{C}(n)$. Apart from constants, this bound is conjectured to be tight. This will be substantiated
by upper bounds of $O(\sqrt{n})$ for $c_\mathcal{C}(n)$ for several special subclasses of complete simple games.
For the general case, we can only obtain the result that $c_\mathcal{C}(n)$ is asymptotically smaller than $O(n)$,
which is the asymptotic of the maximum critical threshold value for simple games. Finally, we relate the more
sophisticated upper bounds on the cost of stability from \cite{Bachrach:2009:CSC:1692490.1692502} to upper bounds
for the critical threshold value for other special subclasses of Boolean games. 

\bigskip

The authors of \cite{hierarchies} have proven the bounds $\frac{1}{2}\left\lfloor\frac{n}{2}
\right\rfloor\le \s_\mathcal{S}(n)\le \frac{n-2}{2}$ for $n\ge 4$ and determined the exact values
$\s_\mathcal{S}(1)=\s_\mathcal{S}(2)=\s_\mathcal{S}(3)=\s_\mathcal{S}(4)=1$, $\s_\mathcal{S}(5)=\frac{6}{5}$,
$\s_\mathcal{S}(6)=\frac{3}{2}$. By considering null voters we conclude
$\s_\mathcal{S}(n)\le \s_\mathcal{S}(n+1)$ and $\s_\mathcal{C}(n)\le \s_\mathcal{C}(n+1)$ for all $n\in\mathbb{N}$.

\begin{proposition}
  \label{prop_c_largest_lower_bound}
  For $n\ge 4$ we have $\s_\mathcal{S}(n)\ge\frac{\left\lfloor\frac{n^2}{4}\right\rfloor}{n}$.
\end{proposition}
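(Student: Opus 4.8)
The plan is to exhibit, for every $n\ge 4$, a single simple game $\chi\in\mathcal{S}_n$ whose critical threshold value already meets the claimed bound; since $\mu(\chi)\le\s_\mathcal{S}(n)$ by definition, this suffices. I would take the ``meet--both--sides'' game attached to a complete bipartite graph: partition the voters into two blocks $A$ and $B$ with $|A|=\lceil n/2\rceil$ and $|B|=\lfloor n/2\rfloor$, and declare a coalition $S$ winning exactly when $S\cap A\neq\emptyset$ and $S\cap B\neq\emptyset$. This $\chi$ is monotone, satisfies $\chi(\emptyset)=0$, and, since both blocks are nonempty for $n\ge 4$, also $\chi(N)=1$, so it is a genuine simple game. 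Its minimal winning coalitions are precisely the cross pairs $\{i,j\}$ with $i\in A$ and $j\in B$, while the two blocks $A$ and $B$ are (maximal) losing coalitions.

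To bound $\mu(\chi)$ from below I would appeal to the linear program of Proposition~\ref{prop_lp_critical_alpha_simple_game}: every non-negative representation $w$ with threshold $\alpha$ must satisfy $w_i+w_j\ge 1$ for each cross pair, together with $w(A)\le\alpha$ and $w(B)\le\alpha$. The one real idea is to add up all $|A|\cdot|B|=\lfloor n^2/4\rfloor$ cross-pair inequalities at once. Since each voter of $A$ occurs in $|B|$ of them and each voter of $B$ in $|A|$ of them, the left-hand sides collapse to a degree-weighted sum, yielding
\[
  |B|\,w(A)+|A|\,w(B)\ \ge\ |A|\cdot|B|\ =\ \left\lfloor\frac{n^2}{4}\right\rfloor .
\]
Replacing $w(A)$ and $w(B)$ by their common upper bound $\alpha$ and using the identity $|A|+|B|=n$ turns this into $n\alpha\ge\lfloor n^2/4\rfloor$, so every feasible representation has $\alpha\ge\lfloor n^2/4\rfloor/n$. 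Hence $\mu(\chi)\ge\lfloor n^2/4\rfloor/n$, and therefore $\s_\mathcal{S}(n)\ge\lfloor n^2/4\rfloor/n$.

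I do not anticipate a real obstacle in the verification, which is nothing more than a single aggregation of the edge constraints; the only thing one must notice is that the degree-weighted sum produces exactly the coefficients $|B|$ and $|A|$, whose total $n$ is what converts the edge count $\lfloor n^2/4\rfloor$ into the per-voter bound. The genuinely creative step is upstream, in guessing the extremal game: one has to realise that the complete bipartite game is the right candidate and, crucially, that the \emph{unbalanced} split $\lceil n/2\rceil,\lfloor n/2\rfloor$ is what produces the improvement for odd $n$, whereas for even $n$ the two blocks are equal and the bound reduces to that of \cite{hierarchies}. As a consistency check, the weighting assigning $\lfloor n/2\rfloor/n$ to each voter of $A$ and $\lceil n/2\rceil/n$ to each voter of $B$ is feasible and attains $\alpha=\lfloor n^2/4\rfloor/n$, so the displayed inequality is in fact an equality and $\mu(\chi)=\lfloor n^2/4\rfloor/n$ exactly; this is also the $n=5$ value $\tfrac{6}{5}$ realised (by a sparser graph) in the example of Section~\ref{sec_certificates}.
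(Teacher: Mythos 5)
Your proof is correct, but it takes a genuinely different route from the paper's. The paper works with sparser graphs: for even $n=2k$ the matching game whose minimal winning coalitions are the $k$ disjoint pairs $\{2i-1,2i\}$, and for odd $n$ the path game with minimal winning coalitions $\{i,i+1\}$; in both cases the bound is certified by exhibiting explicit feasible solutions of the dual program of Lemma~\ref{lemma_lower_bound_critical_alpha_value_simple_games}, and the odd case needs a rather non-obvious, non-uniform choice of multipliers ($u_{W_{2i-1}}=\frac{k+1-i}{n}$, $u_{W_{2i}}=\frac{i}{n}$, $v_{L_1}=\frac{k}{n}$, $v_{L_2}=\frac{k+1}{n}$). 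You instead use one construction for both parities, the complete bipartite game on an $\lceil n/2\rceil$--$\lfloor n/2\rfloor$ split, and derive the bound by aggregating all $\lfloor n^2/4\rfloor$ cross-pair constraints of the LP of Proposition~\ref{prop_lp_critical_alpha_simple_game}; this aggregation is exactly the dual certificate $u_{\{i,j\}}=\frac{1}{n}$, $v_A=\frac{|B|}{n}$, $v_B=\frac{|A|}{n}$, so the two arguments are dual to one another in spirit, but yours avoids the even/odd case split entirely and the computation $|B|\,w(A)+|A|\,w(B)\ge|A|\,|B|$ together with $|A|+|B|=n$ is transparent. You also verify tightness, $\mu(\chi)=\lfloor n^2/4\rfloor/n$, which the paper only asserts in a remark after the proof. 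What the paper's sparser examples buy is the observation, exploited immediately after the proposition, that only the two losing blocks and a thin set of winning cross pairs need to be prescribed, so that exponentially many simple games attain the bound; your game is the ``densest'' member of that picture. All steps check out, including the identity $\lceil n/2\rceil\lfloor n/2\rfloor=\lfloor n^2/4\rfloor$ and the fact that the resulting bound is at least $1$ for $n\ge 4$, so the dropped constraint $\alpha\ge1$ causes no trouble.
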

\begin{proof}
  For the even integers we took an example from \cite{hierarchies} and consider for $n=2k$ the simple game uniquely
  defined by the minimal winning coalitions $W_i=\{2i-1,2i\}$ for $1\le i\le k$. Then the two coalitions
  $L_1=\{1,3,\dots,2k-1\}$ and $L_2=\{2,4,\dots,2k\}$ are maximal losing coalitions. Our example given above is of
  this type ($k=4$). We apply Lemma~\ref{lemma_lower_bound_critical_alpha_value_simple_games} with
  $u_{W_1}=\dots=u_{W_k}=v_{L_1}=v_{L_2}=\frac{1}{2}$ to
  deduce $\s_\mathcal{S}(n)\ge\sum\limits_{i=1}^k \frac{1}{2}=\frac{n}{4}$.
  Using a null voter, as done in \cite{hierarchies}, gives $\s_\mathcal{S}(n)\ge\frac{n-1}{4}$ for odd $n$, where
  $\frac{\left\lfloor\frac{n^2}{4}\right\rfloor}{n}-\frac{n-1}{4}=\frac{n-1}{4n}$.
  
  For odd $n=2k+1$ we consider the simple game uniquely defined by the minimal winning coalitions $W_i=\{i,i+1\}$ for
  $1\le i\le n-1$. Two maximal losing coalitions are given by $L_1=\{1,3,\dots,2k+1\}$ and $L_2=\{2,4,\dots,2k\}$. Next we
  apply Lemma~\ref{lemma_lower_bound_critical_alpha_value_simple_games} and construct a certificate for 
  $\s_\mathcal{S}(n)\ge\frac{(n-1)(n+1)}{4n}=\frac{\left\lfloor\frac{n^2}{4}\right\rfloor}{n}$. We set
  $u_{W_{2i-1}}=\frac{k+1-i}{n}$, $u_{W_{2i}}=\frac{i}{n}$ for all $1\le i\le k$, $v_{L_1}=\frac{k}{n}$, $v_{L_2}=\frac{k+1}{n}$
  and check that it is a feasible solution. Since $\sum\limits_{i=1}^{n-1} u_{W_i}=\frac{k(k+1)}{n}=\frac{(n-1)(n+1)}{4n}$ the
  proposed lower bound follows.
\end{proof}

So, we are only able to slightly improve the previously known lower bound for $\s_\mathcal{S}(n)$ if the number of
voters is odd. One can easily verify that the given examples have a critical threshold value of
$\frac{\left\lfloor\frac{n^2}{4}\right\rfloor}{n}$.

\begin{conjecture}
  \label{conjecture_spectrum_c_largest}
  For $n\ge 4$ we have $\s_\mathcal{S}(n)=\frac{\left\lfloor\frac{n^2}{4}\right\rfloor}{n}$.
\end{conjecture}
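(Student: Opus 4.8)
The plan is to pair the lower bound already secured in Proposition~\ref{prop_c_largest_lower_bound} with a matching upper bound $\s_\mathcal{S}(n)\le\frac{\lfloor n^2/4\rfloor}{n}$, which is the only missing ingredient. For the upper bound I would argue entirely on the dual side. By strong duality the critical threshold value of a game $\chi$ with $\mu(\chi)>1$ equals the maximum, over nonnegative multipliers $(u,v)$ attached to its winning and losing coalitions, of $\sum_{S}u_S$ subject to $\sum_{T}v_T\le 1$ together with the per-voter inequalities $\sum_{S\ni i}u_S\le\sum_{T\ni i}v_T$ from Lemma~\ref{lemma_lower_bound_critical_alpha_value_simple_games}. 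Normalising $\sum_T v_T=1$ and writing $U_i=\sum_{S\ni i}u_S$ and $V_i=\sum_{T\ni i}v_T$, dual feasibility reads $U_i\le V_i\le 1$, and the object to be bounded is $\sum_S u_S$.

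The structural engine is monotonicity: no winning coalition $S$ lies inside a losing coalition $T$, hence $|S\cap(N\setminus T)|\ge 1$ for every winning/losing pair. Consequently
\[
\sum_S u_S=\Big(\sum_S u_S\Big)\Big(\sum_T v_T\Big)\le\sum_{S,T}u_Sv_T\,|S\cap(N\setminus T)|=\sum_{i=1}^n U_i\,(1-V_i).
\]
Using $U_i\le V_i$ and $V_i\le 1$, and then $x(1-x)\le\tfrac14$, gives $\sum_S u_S\le\sum_i V_i(1-V_i)\le\frac n4$, so $\s_\mathcal{S}(n)\le\frac n4$. Since $\frac n4=\frac{\lfloor n^2/4\rfloor}{n}$ for even $n$, this matches Proposition~\ref{prop_c_largest_lower_bound} and settles the conjecture in the even case; moreover the extremal games there force equality throughout (one checks $V_i\equiv\tfrac12$ and $|S\cap(N\setminus T)|=1$ on the supports).

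For odd $n$ the target $\frac{\lfloor n^2/4\rfloor}{n}=\frac n4-\frac1{4n}$ lies strictly below $\frac n4$, so one must extract a gain of exactly $\frac1{4n}$. The natural route is to track the three nonnegative deficits lost above, namely
\[
\sum_S u_S=\frac n4-\sum_{i=1}^n\big(V_i-\tfrac12\big)^2-\sum_{i=1}^n(V_i-U_i)(1-V_i)-\sum_{S,T}u_Sv_T\big(|S\cap(N\setminus T)|-1\big),
\]
and to prove that for odd $n$ these cannot jointly vanish but sum to at least $\frac1{4n}$. The odd extremal games of Proposition~\ref{prop_c_largest_lower_bound}, where $V_i\in\{\tfrac{k}{2k+1},\tfrac{k+1}{2k+1}\}$ and the last two deficits are zero, show that the deviation term alone already equals $\frac1{4n}$, which is encouraging. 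The main obstacle is precisely the need to invoke the winning side: the bare deviation inequality $\sum_i(V_i-\tfrac12)^2\ge\frac1{4n}$ is \emph{false} for an arbitrary multiset of losing coalitions (e.g.\ $\{A,N\setminus A\}$ yields $V_i\equiv\tfrac12$), so any valid argument must use the constraints $U_i\le V_i$ and the presence of enough winning coalitions making $\sum_S u_S$ large. Turning this winning/losing interaction into a game-independent quantitative estimate, rather than verifying it game by game, is the step I expect to resist a short proof, which is consistent with the statement being posed only as a conjecture; a realistic fallback is to establish the refined inequality for the structured subclasses treated elsewhere in the paper and to clear the remaining small odd $n$ with the integer programming formulation of Section~\ref{sec_ilp_max_alpha}.
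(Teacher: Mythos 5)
First, be aware that the statement you were asked to prove is posed in the paper as a \emph{conjecture}: the paper contains no proof of it. What the paper does establish is the lower bound $\s_\mathcal{S}(n)\ge\left\lfloor n^2/4\right\rfloor/n$ (Proposition~\ref{prop_c_largest_lower_bound}), the upper bound $\s_\mathcal{S}(n)\le n/3$ (Corollary~\ref{cor_largest_alpha_simple_games}), and a computational verification of equality for $n\le 9$ via the integer program of Section~\ref{sec_ilp_max_alpha}. You correctly identify the matching upper bound as the only missing ingredient.

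Your dual argument for the upper bound $n/4$ appears to be sound, and it is \emph{stronger} than anything the paper proves. For any feasible dual solution one may scale $u$ and $v$ by a common factor so that $\sum_T v_T=1$ (the per-voter constraints are homogeneous and the objective only increases); monotonicity gives $S\not\subseteq T$, hence $|S\setminus T|\ge 1$ for every winning $S$ and losing $T$ in the supports; and the chain
\[
\sum_S u_S=\Bigl(\sum_S u_S\Bigr)\Bigl(\sum_T v_T\Bigr)\le\sum_{S,T}u_Sv_T\,|S\setminus T|=\sum_{i=1}^n U_i(1-V_i)\le\sum_{i=1}^n V_i(1-V_i)\le\frac{n}{4}
\]
is valid because $0\le U_i\le V_i\le 1$. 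By strong duality this yields $\s_\mathcal{S}(n)\le\max(1,n/4)$, which strictly improves Corollary~\ref{cor_largest_alpha_simple_games} and, combined with Proposition~\ref{prop_c_largest_lower_bound}, settles the conjecture for all even $n\ge 4$ (the paper's extremal example shows every inequality above is tight there). That part is a genuine contribution beyond the paper.

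As a proof of the conjecture as stated, however, the proposal is incomplete: for odd $n$ the target is $n/4-1/(4n)$, and your own decomposition makes clear why the extra $1/(4n)$ does not follow from the quadratic deviation term alone --- a dual solution supported on two complementary losing coalitions has $V_i\equiv\tfrac12$, so any valid argument must exploit the interaction with the winning side, and no game-independent estimate forcing the three deficit terms to sum to at least $1/(4n)$ is supplied. You are candid about this, but it means the odd case remains exactly as open as it is in the paper; the proposal should be read as a proof of the even case plus an improved general upper bound, not as a proof of the full conjecture.
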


We would like to remark that the simple game defined in the proof of Proposition~\ref{prop_c_largest_lower_bound}
is very far from being the unique one with $\mu(\chi)=\frac{\left\lfloor\frac{n^2}{4}\right\rfloor}{n}$. For the
proof we need that $L_1$, $L_2$ are losing coalitions and that the stated subsets of cardinality two are winning
coalitions. We can construct an exponential number of simple games having a critical $\alpha$ of at least
$\frac{\left\lfloor\frac{n^2}{4}\right\rfloor}{n}$ as follows: Let $L_1'\subsetneq L_1$ and $L_2'\subsetneq L_2$
such that none of the winning coalitions of size two is contained in $L_1'\cup L_2'$ and $\left|L_1'\right|,
\left|L_2'\right|\ge 1$. With this we can specify the coalition $L_1'\cup L_2'$ either as winning or as losing
without violating the other properties. This fact suggests that it might be hard to solve the integer
linear program exactly to determine $\s_\mathcal{S}(n)$ for larger values of $n$, see Section~\ref{sec_ilp_max_alpha}.

Another concept to measure the deviation of a simple game $\chi$ from a weighted voting game is its dimension, i.e.\ the
smallest number $k$ of weighted voting games that $\chi$ is given by their intersection, see e.g.\ \cite{1134.68369}.
It is well known that each simple game has a finite dimension (depending on $n$), see \cite{0765.90030}. Simple games of  
dimension~$1$ coincide with weighted voting games having a critical threshold value of $1$.
The next possible dimension is two, where the critical threshold can be as large as the best known lower bound of 
$\left\lfloor\frac{n^2}{4}\right\rfloor/n$. Thus, there is no direct relation between the dimension of
a simple game and its critical threshold value. To construct such examples we split the voters into sets of cardinality
of at least $\left\lfloor\frac{n}{2}\right\rfloor$, i.e.\ as uniformly distributed as possible,
and assign weight vectors $(1,0)$ to the elements of one such set and $(0,1)$ to the elements from the other set.
Using a quota vector $(1,1)$ we obtain a simple game that satisfies the necessary requirements for a
critical $\alpha$ of at least $\left\lfloor\frac{n^2}{4}\right\rfloor/n$. In other words the dimension of a simple game
is somewhat independent from the critical threshold parameter.

\begin{lemma}
  Let $\chi$ be a simple game with $n$ voters and $\mu(\chi)=\alpha$. If a losing
  coalition of cardinality $k$ exists, then we have $\alpha\le n-k$.
\end{lemma}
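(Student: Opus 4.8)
The plan is to certify the bound by exhibiting a single non-negative representation whose threshold value is at most $n-k$, and then to appeal to Proposition~\ref{prop_lp_critical_alpha_simple_game}. Recall that $\mu(\chi)$ is the \emph{minimum} achievable threshold over all feasible weightings; hence producing one explicit feasible pair $(w,\alpha)$ with $\alpha=n-k$ immediately yields $\mu(\chi)\le n-k$, and no appeal to the optimal representation is needed.

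Let $T$ be the given losing coalition with $|T|=k$. First I would record the small but essential observation that $k\le n-1$, equivalently $n-k\ge 1$: since $\chi(N)=1$ the grand coalition is winning while $T$ is losing, so $T\neq N$. This guarantees that the target value $n-k$ is an admissible threshold, i.e.\ that it respects the constraint $\alpha\ge 1$ built into the linear program.

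The heart of the argument is the weighting $w_i:=0$ for $i\in T$ and $w_i:=1$ for $i\in N\setminus T$, which is non-negative as required. For the upper constraints, any losing coalition $T'$ satisfies $w(T')=|T'\setminus T|\le|N\setminus T|=n-k$, so all of them are met with $\alpha=n-k$. The lower constraints are where the structure of a \emph{simple} game is used: for any winning coalition $S$, monotonicity together with $\chi(T)=0$ forces $S\not\subseteq T$, since any subset of the losing coalition $T$ would itself be losing; therefore $S$ meets $N\setminus T$ in at least one voter and $w(S)\ge 1$.

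Combining these, $(w,\alpha)=(w,\,n-k)$ is feasible for the linear program of Proposition~\ref{prop_lp_critical_alpha_simple_game}, whose optimum equals $\mu(\chi)$, so $\mu(\chi)\le n-k$ and thus $\alpha\le n-k$. There is no genuine obstacle here; the only two points requiring a moment of care are the monotonicity deduction that every winning coalition intersects $N\setminus T$ and the verification $n-k\ge 1$ ensuring the constructed threshold is legitimate.
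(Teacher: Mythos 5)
Your proof is correct and follows essentially the same route as the paper: the identical weighting ($0$ on the losing coalition, $1$ elsewhere), the same bound $w(N\setminus T)=n-k$ on losing coalitions, and the same observation that every winning coalition must leave $T$. The extra checks you include (that $n-k\ge1$ and the explicit monotonicity justification) are sound refinements of the paper's terser argument.
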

\begin{proof}
  Let $S\subsetneq N$ be a losing coalition of cardinality $k$. We use the weights $w_i=0$ for all $i\in S$ and $w_i=1$ for
  all $i\in N\backslash S$. Since $w(N)=n-k$ the weight of each losing coalition is at most $n-k$ and since each winning
  coalition must contain at least one element from $N\backslash S$ their weight is at least $1$.
\end{proof}

\begin{lemma}
  Let $\chi$ be a simple game with $n$ voters and $\mu(\chi)=\alpha$. If the maximum size
  of a losing coalition is denoted by $k$ we have $\alpha\le \max\!\left(1,\frac{k}{2}\right)$.
\end{lemma}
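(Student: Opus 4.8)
The plan is to exhibit an explicit non-negative weighting that certifies $\mu(\chi)\le\max(1,\tfrac{k}{2})$. By Proposition~\ref{prop_lp_critical_alpha_simple_game} it suffices to produce weights $w_i\ge 0$ and a value $\alpha=\max(1,\tfrac{k}{2})\ge 1$ that are feasible for the corresponding linear program, i.e.\ to check that every winning coalition has weight at least $1$ and every losing coalition has weight at most $\alpha$.

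The naive attempt is to use the uniform weights $w_i=\tfrac12$. Since the largest losing coalition has cardinality $k$, every losing coalition $T$ then satisfies $w(T)=|T|/2\le k/2$, which is exactly what we want on the losing side. The difficulty is the winning side: a winning coalition of size $1$ would only receive weight $\tfrac12<1$, so uniform weights fail precisely when some singleton is winning. This is the main obstacle, and essentially the only one.

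To repair it I would single out the set $D:=\{i\in N:\chi(\{i\})=1\}$ of voters whose singleton already wins, and assign $w_i:=1$ for $i\in D$ and $w_i:=\tfrac12$ for $i\notin D$. The key structural observation is that, by monotonicity, no losing coalition can contain a voter of $D$: any coalition containing such an $i$ contains the winning coalition $\{i\}$ and is therefore winning. Hence every losing coalition lies in $N\setminus D$, so its weight is still $|T|/2\le k/2$, and the inflated weights on $D$ cost nothing on the losing side.

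It then remains to verify the winning inequalities. A winning coalition meeting $D$ has weight at least $1$ immediately, since weights are non-negative and some member carries weight $1$. A winning coalition disjoint from $D$ cannot be a singleton, because a winning singleton would by definition lie in $D$; hence it has size at least $2$ and weight at least $1$. Taking $\alpha=\max(1,\tfrac{k}{2})$, which is $\ge 1$ as required and dominates $k/2$, makes the weighting feasible and yields the claimed bound. Isolating $D$ in this way covers the degenerate case $k=1$ (where the bound reads $\alpha\le 1$) and the case $k\ge 2$ simultaneously, with no further case analysis needed.
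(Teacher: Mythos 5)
Your proposal is correct and uses exactly the weighting of the paper's own proof: weight $1$ for voters whose singleton is winning and $\frac{1}{2}$ for all others, with the same justifications on both the winning and losing sides (the paper merely states these verifications more tersely). No differences of substance.
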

\begin{proof}
  We assign a weight of $1$ to every voter $i$ where $\{i\}$ is a winning coalition and a weight of
  $\frac{1}{2}$ to every other voter. Thus each winning coalition has a weight of at least $1$ and
  each losing coalition a weight of at most $\frac{k}{2}$.
\end{proof}

\begin{corollary}
  \label{cor_largest_alpha_simple_games}
  For each integer $n\ge 3$ we have $\s_\mathcal{S}(n)\le\frac{n}{3}$.
\end{corollary}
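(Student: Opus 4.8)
The plan is to combine the two immediately preceding lemmas, which constrain $\mu(\chi)$ in opposite directions depending on the maximum size of a losing coalition. First I would let $k$ denote the maximum cardinality of a losing coalition of $\chi$; this is well defined since $\emptyset$ is always losing. Because a losing coalition of size $k$ exists, the first preceding lemma gives $\alpha\le n-k$, a bound that is strong precisely when $k$ is large. The second preceding lemma gives $\alpha\le\max\!\left(1,\frac{k}{2}\right)$, a bound that is strong when $k$ is small. The key idea is to take the minimum of these two upper bounds and to optimize the resulting expression over the admissible values of $k$.

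Concretely, I would split into two regimes. Suppose first that $k\ge 2$, so that $\max\!\left(1,\frac{k}{2}\right)=\frac{k}{2}$ and hence $\alpha\le\min\!\left(n-k,\frac{k}{2}\right)$. Here the decreasing linear function $n-k$ and the increasing linear function $\frac{k}{2}$ cross at $k=\frac{2n}{3}$, where both take the common value $\frac{n}{3}$; for every other value of $k$ the minimum of the two is strictly smaller. Consequently $\alpha\le\frac{n}{3}$ throughout this regime. If instead $k\le 1$, then $\frac{k}{2}\le\frac{1}{2}<1$, so the second lemma already forces $\alpha\le 1$, and the hypothesis $n\ge 3$ yields $1\le\frac{n}{3}$. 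Combining the two regimes establishes $\s_\mathcal{S}(n)\le\frac{n}{3}$.

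I expect essentially no obstacle, since the argument reduces to locating the intersection of two linear functions and observing that their pointwise minimum is maximized exactly at the balance point $k=\frac{2n}{3}$. The only step requiring genuine care is the small-$k$ case, where the $\max(1,\cdot)$ appearing in the second lemma must be treated separately; this is also the precise point at which the hypothesis $n\ge 3$ is used, guaranteeing that the trivial bound $\alpha\le 1$ does not exceed $\frac{n}{3}$.
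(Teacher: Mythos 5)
Your proposal is correct and follows essentially the same route as the paper: both combine the two preceding lemmas by balancing the decreasing bound $n-k$ against the increasing bound $\max\!\left(1,\frac{k}{2}\right)$ at the crossover $k=\frac{2n}{3}$, using $n\ge 3$ to absorb the trivial case $\alpha\le 1\le\frac{n}{3}$. The paper merely phrases the case split as $k\le\frac{2n}{3}$ versus $k>\frac{2n}{3}$ rather than isolating $k\le 1$ separately.
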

\begin{proof}
  Let $\chi$ be a simple game with largest losing coalition of size $k$ and consisting of $n$ voters. If
  $k\le\frac{2n}{3}$ then we have $\mu(\chi)\le \max\left(1,\frac{k}{2}\right)\le \frac{n}{3}$.
  Otherwise, we have $\mu(\chi)\le n-k\le\frac{n}{3}$.
\end{proof}

To further improve Corollary~\ref{cor_largest_alpha_simple_games} some reduction techniques might be useful.

\begin{lemma}
  \label{lemma_reduction_sg}
  If a simple game $\chi$ on $n\ge 2$ voters contains a winning coalition of cardinality one
  then we have $\mu(\chi)\le \s_\mathcal{S}(n-1)$.
\end{lemma}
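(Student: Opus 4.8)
The plan is to delete the voter forming a winning singleton and to compare $\chi$ with the game induced on the remaining $n-1$ voters. Fix a voter $i$ with $\chi(\{i\})=1$. By monotonicity every coalition containing $i$ is winning, so every losing coalition of $\chi$ is a subset of $N':=N\setminus\{i\}$. I would then introduce the restricted function $\chi'$ on $N'$ defined by $\chi'(S):=\chi(S)$ for $S\subseteq N'$, which is monotone and satisfies $\chi'(\emptyset)=0$.

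Before exploiting $\chi'$ I would separate off the degenerate case in which $N'$ itself is losing in $\chi$. Then, again by monotonicity, all subsets of $N'$ are losing, so $i$ is a dictator and $\chi$ is the weighted game $[1;1,0,\dots,0]$ with $\mu(\chi)=1$; since the critical threshold value never drops below $1$ and $n\ge 2$ guarantees $\s_\mathcal{S}(n-1)\ge 1$, the desired inequality holds trivially. In the remaining case $\chi'(N')=1$, so $\chi'\in\mathcal{S}_{n-1}$ is a genuine simple game on $n-1$ voters and $\mu(\chi')\le \s_\mathcal{S}(n-1)$ holds by the very definition of $\s_\mathcal{S}$.

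The key step is to lift an optimal representation of $\chi'$ to one of $\chi$ without increasing the threshold. By Lemma~\ref{lemma_non_negative_weights_simple_game} I would pick non-negative weights $w'$ on $N'$ realizing $\mu(\chi')=\alpha'$, and extend them to $N$ by setting $w_i:=1$ and $w_j:=w'_j$ for $j\in N'$. I would then check the two families of constraints of Proposition~\ref{prop_lp_critical_alpha_simple_game}: a winning coalition $S$ either avoids $i$, whence $w(S)=w'(S)\ge 1$ because $S$ is winning in $\chi'$, or contains $i$, whence $w(S)\ge w_i=1$ since all weights are non-negative; a losing coalition $T$ is a subset of $N'$ that is losing in $\chi'$, so $w(T)=w'(T)\le\alpha'$. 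This shows $\chi$ is $\alpha'$-roughly weighted, giving $\mu(\chi)\le\alpha'=\mu(\chi')\le \s_\mathcal{S}(n-1)$.

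The only real subtlety is the bookkeeping around the dictatorship case: deleting $i$ can turn the grand coalition of $N'$ into a losing set, so $\chi'$ need not be a simple game, and this boundary situation must be dispatched separately rather than folded into the main argument. Everything else is a routine verification that the extended weights satisfy the non-negative linear program, the crucial point being that the added weight $w_i$ never enters a losing-coalition constraint because no losing coalition contains $i$.
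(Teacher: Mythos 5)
Your proof is correct and follows essentially the same route as the paper's: separate off the case where $N\setminus\{i\}$ is losing (dictatorship, $\mu(\chi)=1$), and otherwise delete the winning singleton, take a representation of the restricted game with threshold at most $\s_\mathcal{S}(n-1)$, and extend it by giving the deleted voter weight $1$. Your write-up is somewhat more careful than the paper's about why the restriction is a bona fide simple game and about the non-negativity needed for the extension step, but the argument is the same.
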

\begin{proof}
  W.l.o.g.\ let $\{n\}$ be a winning coalition. If $\{1,\dots,n-1\}$ is a losing coalition then $\chi$ is roughly
  weighted using the weights $w_1=\dots=w_{n-1}=0$, $w_n=1$. Otherwise we consider the simple game $\chi'$ arising
  from $\chi$ by dropping voter~$n$. Let $w_1,\dots,w_{n-1}$ be a weighting for $\chi'$ corresponding
  to a threshold value of at most $\s_\mathcal{S}(n-1)$. By choosing $w_n=1$ we can extend this to a valid weighting
  for $\chi$ since every coalition which contains voter $n$ is a winning coalition.
\end{proof}

\bigskip
\bigskip

From now on, we consider complete simple games. To provide a lower bound on $\s_\mathcal{C}(n)$ we consider
a special subclass of complete simple games, i.e., complete simple games with $t=2$ types of voters and a
unique shift-minimal winning vector $(a,b)$ ($r=1$). So, if a coalition contains at least $a$ voters of the
first type and and least $a+b$ members in total, then it is winning, otherwise it is losing.

In the following we will derive conditions on the parameters $a$ and $b$ in order to exclude weighted games,
which would lead to a critical threshold value of $1$. Since the shift-maximal losing vectors depend on a
certain relation between $a$ and $b$, we have to consider two different cases to state the linear program
to determine the critical threshold value.

For $a+b-1\le n_1$ (case 1) the shift-maximal losing vectors are given by
$(a+b-1,0)$, $(a-1,n_2)$ 
and otherwise (case 2) by
$(n_1,a+b-1-n_1)$, $(a-1,n_2)$.

Due to condition~(a).(iii) in Theorem~\ref{thm_csg_characterization} we have $a>0$.  
and $w_1=w_2=\frac{1}{b}$ shows that the game is roughly weighted in this case. 
For $a=n_1$ a quota of $q=n_1n_2+b$ and weights $w_1=n_2$ and $w_2=1$ testify that the
game is weighted. So, we only need to consider $1\le a\le n_1-1$, $0\le b\le n_2-1$.  For $b=0$ the games are weighted via quota
$q=a$ and weights $w_1=1$, $w_2=0$. For $b=1$ the games are weighted via quota $q=an_2+1$ and weights $w_1=n_2$, $w_2=1$. If
$b=n_2$ a quota of $q=a+n_2-1+\frac{a}{n_1+n_2}$ and weights of $w_1=1+\frac{1}{n_1+n_2}$, $w_2=1$ show that these games are
weighted  so that we can assume $2\le b\le n_2-2$ and $n\ge 6$.

To compute $\s_\mathcal{C}(n,r=1,t=2)$ we have to solve the linear program
\begin{eqnarray}
  \min \alpha\quad\quad\text{s.t.}\nonumber\\
  aw_1+bw_2\ge 1\label{lp_1_1}\\
  \alpha-(a+b-1)w_1\ge 0\label{lp_1_2}\\
  \alpha-(a-1)w_1-n_2w_2\ge 0\label{lp_1_3}\\
  w_1\ge w_2\label{lp_1_4}\\
  w_2\ge 0\label{lp_1_5}
\end{eqnarray}
for case~1 and
\begin{eqnarray}
  \min \alpha\quad\quad\text{s.t.}\nonumber\\
  aw_1+bw_2\ge 1\label{lp_2_1}\\
  \alpha-n_1w_1-(a+b-1-n_1)w_2\ge 0\label{lp_2_2}\\
  \alpha-(a-1)w_1-n_2w_2\ge 0\label{lp_2_3}\\
  w_1\ge w_2\label{lp_2_4}\\
  w_2\ge 0\label{lp_2_5}
\end{eqnarray}
for case~2. We would like to remark that we may also include the constraint $\alpha\ge1$. Once it is tight
we have $\alpha=1$, so that we assume $\alpha>1$ in the following.

The optimal solution of these linear programs is attained at a corner of the corresponding polytope which is the solution
of a $3$-by-$3$-equation system arising by combining three of the five inequalities. As notation we use
$A\subset \{\text{\ref{lp_1_1}},\text{\ref{lp_1_2}},\text{\ref{lp_1_3}},\text{\ref{lp_1_4}},\text{\ref{lp_1_5}}\}$ with
$|A|=3$. (Some of these solutions may be infeasible.)  At first, we remark that $w_1=w_2=0$ is infeasible in both cases so
that we assume $|A\cap\{\text{\ref{lp_1_4}},\text{\ref{lp_1_5}}\}|\le 1$.

For case~1 the basic solutions, parameterized by sets of tight inequalities, are given by:
\begin{itemize}
\setlength{\itemsep}{5pt}
 \item[$\{\text{\ref{lp_1_1}},\text{\ref{lp_1_2}},\text{\ref{lp_1_3}}\}$]
       $w_1=\frac{n_2}{an_2+b^2}$, $w_2=\frac{b}{an_2+b^2}$, $\alpha=\frac{n_2(a+b-1)}{an_2+b^2}$, always feasible, e.g.\ we
       have $n_2(b-1)\ge b^2$ due to $b\le n_2-2$ and $b\ge 2$ so that $\alpha\ge 1$ holds.
 \item[$\{\text{\ref{lp_1_1}},\text{\ref{lp_1_2}},\text{\ref{lp_1_4}}\}$] $\alpha=\frac{a+b-1}{a+b}<1$, contradiction
 \item[$\{\text{\ref{lp_1_1}},\text{\ref{lp_1_2}},\text{\ref{lp_1_5}}\}$] $w_1=\frac{1}{a}$, $w_2=0$, $\alpha=\frac{a+b-1}{a}$, always feasible
 \item[$\{\text{\ref{lp_1_1}},\text{\ref{lp_1_3}},\text{\ref{lp_1_4}}\}$] $w_1=\frac{1}{a+b}$, $w_2=\frac{1}{a+b}$, $\alpha=\frac{a-1+n_2}{a+b}$, always feasible
 \item[$\{\text{\ref{lp_1_1}},\text{\ref{lp_1_3}},\text{\ref{lp_1_5}}\}$] $\alpha=\frac{a-1}{a}<1$, contradiction
 \item[$\{\text{\ref{lp_1_2}},\text{\ref{lp_1_3}},\text{\ref{lp_1_4}}\}$] $\alpha=0<1$, contradiction
 \item[$\{\text{\ref{lp_1_2}},\text{\ref{lp_1_3}},\text{\ref{lp_1_5}}\}$] $\alpha=0<1$, contradiction
\end{itemize}
We always have $\frac{a+b-1}{a}>\frac{a+b-1}{a+\frac{b^2}{n_2}}=\frac{n_2(a+b-1)}{an_2+b^2}$ and
\begin{eqnarray*}
 (a+b)\cdot(an_2+b^2)\cdot\left(\frac{a-1+n_2}{a+b}-\frac{n_2(a+b-1)}{an_2+b^2}\right)=b(n_2-b)+a(n_2-b)^2>0.
\end{eqnarray*}
Thus $\alpha=\frac{n_2(a+b-1)}{an_2+b^2}$ is always the minimum value.

\bigskip

For case~2 the basic solutions are given by:
\begin{itemize}
 \item[$\{\text{\ref{lp_2_1}},\text{\ref{lp_2_2}},\text{\ref{lp_2_3}}\}$]
                $w_1=\frac{n_1+n_2+1-a-b}{-a^2-2ab+a+an_1+n_1b+an_2+b}$, $w_2=\frac{n_1+1-a}{-a^2-2ab+a+an_1+n_1b+an_2+b}$,\\
                $\alpha=\frac{n_1n_2-ab+b-a^2+2a+an_1-1-n_1}{-a^2-2ab+a+an_1+n_1b+an_2+b}=:\alpha'$, where we have $w_1\ge w_2$. $\alpha\ge 1$ is equivalent
                to $n_1n_2+a-1-n_1\ge -ab+n_1b+an_2$ which can be simplified to the valid inequality
                $\underset{\ge 1}{\underbrace{(n_1-a)}}\cdot\underset{\ge 1}{\underbrace{(n_2-b-1)}}\ge 1$.
 \item[$\{\text{\ref{lp_2_1}},\text{\ref{lp_2_2}},\text{\ref{lp_2_4}}\}$] $\alpha=\frac{a+b-1}{a+b}<1$, contradiction
 \item[$\{\text{\ref{lp_2_1}},\text{\ref{lp_2_2}},\text{\ref{lp_2_5}}\}$] $w_1=\frac{1}{a}$, $w_2=0$, $\alpha=\frac{n_1}{a}$, always feasible
 \item[$\{\text{\ref{lp_2_1}},\text{\ref{lp_2_3}},\text{\ref{lp_2_4}}\}$] $w_1=\frac{1}{a+b}$, $w_2=\frac{1}{a+b}$, $\alpha=\frac{a-1+n_2}{a+b}$, always feasible
 \item[$\{\text{\ref{lp_2_1}},\text{\ref{lp_2_3}},\text{\ref{lp_2_5}}\}$] $\alpha=\frac{a-1}{a}<1$, contradiction
 \item[$\{\text{\ref{lp_2_2}},\text{\ref{lp_2_3}},\text{\ref{lp_2_4}}\}$] $\alpha=0<1$, contradiction
 \item[$\{\text{\ref{lp_2_2}},\text{\ref{lp_2_3}},\text{\ref{lp_2_5}}\}$] $\alpha=0<1$, contradiction
\end{itemize}
$\alpha'\le \frac{n_1}{a}$ is equivalent to
\begin{eqnarray*}
  \frac{(n_1+1-a)\cdot(a(n_1+1-a)+b(n_1-a))}{a\cdot(a(n_1+n_2+1-a-b)+b(n_1+1-a))} \ge 0
\end{eqnarray*}
and $\alpha'\le\frac{a-1+n_2}{a+b}$ is equivalent to
\begin{eqnarray*}
  \frac{(n_2-b)(a(n_2-b)+b)}{(a+b)\cdot\Big(a(n_2-b)+(a+b)(n_1+1-a)\Big)}\ge 0.
\end{eqnarray*}
Since in both cases all factors are non-negative the respective inequalities are valid and the minimum possible $\alpha$-value is given by $\alpha'$.

\bigskip

To answer the question for the maximum possible $\alpha$ in case~1 depending on $n$ we have to solve the following optimization problem
\begin{eqnarray*}
  \max \frac{a+b-1}{a+\frac{b^2}{n_2}} \quad\quad\text{s.t.}\\
  a+b-1\le n_1\\
  n_1+n_2=n\\
  n_1,n_2\ge 1\\
  1\le a\le n_1-1\\
  2\le b\le n_2-2,
\end{eqnarray*}
where all variables have to be integers. For $z\ge 1$, $x>y>0$ we have $\frac{z-1+x}{z-1+y}>\frac{z+x}{z+y}$. Thus the maximum
is attained at the minimum value of $a$ which is $1$. ($a=1$ also yields the weakest constraint $a+b-1\le n_1$.) Since $1\le a\le n_1-1$
is equivalent to $n_1\ge 2$, which is implied by $a+b-1\le n_1$ via $b\ge 2$, we can drop this constraint. 

If $a+b-1<n_1$ then we could decrease $n_1$ by $1$ and increase $n_2$ by $1$ yielding a larger target
value. Thus we have $a+b-1=n_1$, which is equivalent to $b=n_1$. Using $n_1+n_2=n$ yields $n_2=n-b$. Inserting then yields
the optimization problem
$$
  \max\frac{b}{1+\frac{b^2}{n-b}},\,\,2\le b\le \frac{n-2}{2},
$$
where $b,n\in\mathbb{N}$. Relaxing the integrality constraint results in
$$
  b=\left(\sqrt{n}-1\right)\cdot\frac{n}{n-1}
$$
with optimal value
$$
  \frac{n^{5/2}-2n^2+n^{3/2}}{2n^2-3n^{3/2}+n^{1/2}}\le\frac{\sqrt{n}}{2}
$$
tending to $\frac{\sqrt{n}}{2}$ as $n$ approaches infinity. Since the target function is continuous and there is only one inner
local maximum, the optimal integer solution is either $b=\left\lfloor\left(\sqrt{n}-1\right)\cdot\frac{n}{n-1}\right\rfloor$ or
$b=\left\lceil\left(\sqrt{n}-1\right)\cdot\frac{n}{n-1}\right\rceil$. For $n\ge 9$ also the condition $2\le b\le \frac{n-2}{2}$
is fulfilled. Let us denote the first bound by $\underline{f_1}(n)$ and the second bound by $\overline{f_1}(n)$. In the following
table we compare these bounds with the exact value $\s_\mathcal{C}(n)$, determined using the methods from Section~\ref{sec_ilp_max_alpha},
and $\frac{\sqrt{n}}{2}$.


\begin{center}
  \begin{tabular}{ccccccccccccccc}
  \hline
   $n$ & 9 & 10 & 11 & 12 & 13 & 14 & 15 & 16 \\
   $\underline{f_1}(n)$  &\!1.2727\!&\!1.3333\!&\!1.3846\!&\!1.4286\!&\!1.4667\!&\!1.5000\!&\!1.7143\!&\!1.7727\!\\[1mm]
   $\overline{f_1}(n)$   &\!1.2000\!&\!1.3125\!&\!1.4118\!&\!1.5000\!&\!1.5789\!&\!1.6500\!&\!1.6296\!&\!1.7143\!\\
   $\s_\mathcal{C}(n)$   &\!1.3333\!&\!1.4074\!&\!1.4667\!&\!1.5556\!&\!1.6500\!&\!1.7344\!&\!1.8088\!&\!1.8750\!\\
   $\frac{\sqrt{n}}{2}$&\!1.5000\!&\!1.5811\!&\!1.6583\!&\!1.7320\!&\!1.8028\!&\!1.8708\!&\!1.9365\!&\!2.0000\!\\ 
   \hline
  \end{tabular}
\end{center}

\bigskip

In case~2 we obtain the optimization problem
\begin{eqnarray*}
  \max \frac{n_1n_2-ab+b-a^2+2a+an_1-1-n_1}{-a^2-2ab+a+an_1+n_1b+an_2+b} \quad\quad\text{s.t.}\\
  a+b-1\ge n_1+1\\
  n_1+n_2=n\\
  n_1,n_2\ge 1\\
  1\le a\le n_1-1\\
  2\le b\le n_2-2,
\end{eqnarray*}
For $a>1$ we can check that decreasing $a$, $n_1$ and increasing $b$, $n_2$ by $1$ does not decrease the target
value. Thus we can assume $a=1$ in the optimal solution so that the target function simplifies to
$\frac{n_1n_2}{n_1(b+1)+(n_2-b)}=\frac{n_2}{b+1+\frac{n_2-b}{n_1}}$. Decreasing $b$ by $1$ increases this target
function so that either $a+b-1\ge n_1+1$ or $b\ge 2$ is tight. In the latter case we would have $n_1\le 1$, which contradicts
$1=a\le n_1-1$. Thus, we have $a+b-1= n_1+1$ in the optimum which is equivalent to $b=n_1+1$. Inserting this and $n_2=n-n_1$
yields the target function
$$
  \frac{n-b+1}{b+1+\frac{n-2b+1}{b-1}}
$$
having the non-negative optimal solution of $b=\frac{1+\sqrt{1+n^3-2n}}{n}$ with target value
$$
  \frac{1}{2}\cdot\frac{\sqrt{n^3+1-2n}-(n-1)}{n-1}\le\frac{\sqrt{n}}{2}
$$
tending to $\frac{\sqrt{n}}{2}$ as $n$ approaches infinity. If the other inequalities are fulfilled, then rounding up or
down yields the optimal integral solution (in this case; not in general). In both cases the conditions $2\le b\le n_2-2$,
$1=a\le n_1-1$ are fulfilled for $n\ge 9$. We produce a similar table as before:

\begin{center}
  \begin{tabular}{ccccccccccccccc}
  \hline
   $n$ & 9 & 10 & 11 & 12 & 13 & 14 & 15 & 16 \\
   $\underline{f_2}(n)$ &\!1.1667 &\!1.2308\!&\!1.2857\!&\!1.3333\!&\!1.3750\!&\!1.4118\!&\!1.4444\!&\!1.6250\!\\[1mm]
   $\overline{f_2}(n)$  &\!1.0588 &\!1.1667\!&\!1.2632\!&\!1.3500\!&\!1.4286\!&\!1.5000\!&\!1.5652\!&\!1.5484\!\\
   $\s_\mathcal{C}(n)$     &\!1.3333 &\!1.4074\!&\!1.4667\!&\!1.5556\!&\!1.6500\!&\!1.7344\!&\!1.8088\!&\!1.8750\!\\
   \hline
  \end{tabular}
\end{center}

\medskip

\begin{conjecture}
\label{conjecture_spectrum_c_largest_complete_simple_game}
 $$
   \s_\mathcal{C}(n)\in\Theta\!\left(\sqrt{n}\right).
 $$
\end{conjecture}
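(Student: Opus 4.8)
The statement asserts matching lower and upper bounds, so the plan is to establish $\s_\mathcal{C}(n)=\Omega(\sqrt n)$ and $\s_\mathcal{C}(n)=O(\sqrt n)$ separately.

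First I would dispatch the lower bound, which is essentially already in hand. Restricting to the subclass of complete simple games with $t=2$ types and a single shift-minimal winning vector ($r=1$), the case analysis carried out above shows that the best attainable critical threshold value, after relaxing integrality (case~1), equals
$$
  \frac{n^{5/2}-2n^2+n^{3/2}}{2n^2-3n^{3/2}+n^{1/2}},
$$
which lies below $\tfrac{\sqrt n}{2}$ but tends to it as $n\to\infty$. Since the objective is continuous near its maximiser, the integer roundings $\underline{f_1}(n)$ and $\overline{f_1}(n)$ are both asymptotic to $\tfrac{\sqrt n}{2}$, so this explicit family already realises a critical threshold value $\bigl(\tfrac12-o(1)\bigr)\sqrt n$. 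As these games belong to $\mathcal{C}_n$, we obtain $\s_\mathcal{C}(n)\ge\bigl(\tfrac12-o(1)\bigr)\sqrt n=\Omega(\sqrt n)$.

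The upper bound $\s_\mathcal{C}(n)=O(\sqrt n)$ is the substantial direction, and here I would argue for an arbitrary complete simple game $\chi\in\mathcal{C}_n$ by exhibiting a good weighting for the linear program of Lemma~\ref{lemma_lp_critical_alpha_complete_simple_game}. The natural first step is a dichotomy on the largest losing-coalition size $k$. The two lemmas preceding Corollary~\ref{cor_largest_alpha_simple_games} give $\mu(\chi)\le\max\!\left(1,\tfrac{k}{2}\right)$ and $\mu(\chi)\le n-k$; the former already yields $\mu(\chi)\le\sqrt n$ whenever $k\le 2\sqrt n$. The remaining regime $k>2\sqrt n$ is where completeness must be exploited: a losing coalition of size exceeding $2\sqrt n$ forces the most desirable classes $N_1,\dots,N_j$ to be heavily under-represented in every shift-maximal losing vector, and the plan is to use this to concentrate almost all weight on a short prefix of the most desirable voters, separating $\overline{\mathcal{W}}$ from $\overline{\mathcal{L}}$ with a bounded ratio and thereby replacing the weak estimate $n-k$ by an $O(\sqrt n)$ bound.

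The main obstacle is precisely this large-$k$ regime, and more fundamentally the fact that the extremal games for the upper bound need not resemble the $t=2$, $r=1$ families used for the lower bound: both the number of types $t$ and the number $r$ of shift-minimal winning vectors may grow with $n$, and a single weight vector $w_1\ge\dots\ge w_t\ge0$ must simultaneously keep every vector of $\overline{\mathcal{W}}$ at weight at least $1$ and every vector of $\overline{\mathcal{L}}$ at weight at most $\alpha$. I would attempt either an inductive reduction that merges adjacent equivalence classes while controlling the resulting change in $\mu$, or a direct LP-duality (fractional trading transform) bound on the program of Lemma~\ref{lemma_lp_critical_alpha_complete_simple_game} that is uniform in $t$ and $r$. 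One should note that the estimate $\mu(\chi)\le 1+CoS(\chi)$ of Lemma~\ref{lemma_cos} is too lossy to help here, since $CoS$ can exceed $\mu$ by a factor of order $n$. At present an $O(\sqrt n)$ bound can be proved only for special subclasses of complete simple games, while in general one obtains only that $\s_\mathcal{C}(n)$ grows slower than any linear function; bridging the gap between this sub-linear estimate and the conjectured $O(\sqrt n)$ is exactly the content that remains open.
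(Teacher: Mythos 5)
This statement is a conjecture in the paper and is not proven there either: the paper establishes only the lower bound $\Omega(\sqrt{n})$ (via the same $t=2$, $r=1$ family with relaxed integrality that you invoke), together with $O(\sqrt{n})$ upper bounds for special subclasses (games with consensus, two types of voters, one shift-minimal winning vector) and the general sublinear bound $O\!\left(\frac{n\log\log n}{\log n}\right)$. Your proposal matches the paper's state of knowledge exactly --- your lower-bound argument is the paper's own, and your assessment that the $O(\sqrt{n})$ upper bound for arbitrary complete simple games (in particular the large-$k$ regime with many types and many shift-minimal winning vectors) is precisely what remains open is accurate.
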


\bigskip

So far we do not know any examples of complete simple games with a critical threshold value larger than
$\max\!\left(1,\frac{\sqrt{n}}{2}\right)$. We will prove Conjecture~\ref{conjecture_spectrum_c_largest_complete_simple_game}
for some special classes of complete simple games. An important class, used by many real-world voting systems, is given by
the so-called games with consensus, i.e.\ intersections of a weighted voting game and a symmetric game $[q';1,\dots,1]$,
see e.g.\ \cite{1087.91005,0856.90029}. The voting procedure for the council of the European Union based on the Treaty of
Nice consists of such a consensus, i.e.\ at least $14$ (or $18$, if the proposal was not made by the commission) of the
countries must agree. (The two other ingredients are a majority of the voting weights and a majority of the population.)
Concerning the distribution of power in the European Union we refer the interested reader to e.g.\ \cite{1110.90067}.

\begin{lemma}
  \label{lemma_games_with_consensus}
  The critical threshold value $\mu(\chi)$ of a complete simple game $\chi\in\mathcal{C}_n$ with consensus, given as
  the intersection of $[q;w_1,\dots,w_n]$ and $[q';1,\dots,1]$, is at most $\sqrt{n}$.
\end{lemma}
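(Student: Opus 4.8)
The plan is to prove the bound constructively, by writing down two explicit weightings of $\chi$ and showing that, whatever the value of the consensus quota $q'$ turns out to be, at least one of them certifies $\mu(\chi)\le\sqrt{n}$. The starting observation is that in the intersection a coalition $S$ is winning exactly when $|S|\ge q'$ and $w(S)\ge q$, and hence losing exactly when $|S|<q'$ or $w(S)<q$; every losing coalition therefore fails \emph{at least} one of the two conditions, and I would treat these two failure modes separately.

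First I would normalize the weighted ingredient $[q;w_1,\dots,w_n]$. Using Lemma~\ref{lemma_non_negative_weights_simple_game} I may assume its weights are non-negative, after which I rescale the quota to $q=1$ and cap every weight at the quota, i.e.\ replace $w_i$ by $\min(w_i,1)$. For non-negative weights this capping leaves the induced winning/losing partition intact and yields $0\le w_i\le 1$, so that $w(S)\le|S|$ holds for every coalition. With this in hand the \textbf{first} weighting is $w$ itself: every winning coalition has $w(S)\ge 1$, a losing coalition that fails the weight condition has $w(S)<1$, and a losing coalition that fails the cardinality condition has $w(S)\le|S|\le q'-1$; hence this weighting shows $\mu(\chi)\le\max(1,q'-1)$. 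The \textbf{second} weighting assigns the uniform value $1/q'$ to every voter: a winning coalition then has weight $|S|/q'\ge 1$, while any losing coalition has weight at most $n/q'$, giving $\mu(\chi)\le\max(1,n/q')$.

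It remains to balance the two estimates. Since both are valid upper bounds, $\mu(\chi)\le\min\bigl(\max(1,q'-1),\,\max(1,n/q')\bigr)$, and I would finish with a short case distinction at the crossover $q'\approx\sqrt{n}$: if $q'\le\sqrt{n}+1$ the first bound already gives $\max(1,q'-1)\le\sqrt{n}$, whereas if $q'>\sqrt{n}+1$ then $q'>\sqrt{n}$ forces $n/q'<\sqrt{n}$ and the second bound gives $\max(1,n/q')\le\sqrt{n}$; as these two cases exhaust all $q'$, the claim follows. I expect the genuinely delicate step to be the normalization rather than the balancing: one must verify that capping the weights at the quota preserves the weighted game (this is where non-negativity is essential) and that the resulting inequality $w(S)\le|S|$ is exactly what controls the small, cardinality-losing coalitions, on which the whole first estimate rests. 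Once $w(S)\le|S|$ is available the appearance of $\sqrt{n}$ is immediate, as it is precisely the crossover of the two regimes $q'-1$ and $n/q'$.
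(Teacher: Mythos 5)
Your proof is correct and takes essentially the same route as the paper's: both use the capped, quota-normalized weights of the weighted component (so that any losing coalition of weight above $1$ must fail the cardinality criterion and have fewer than $q'$ members) together with a uniform weighting tied to the consensus quota, split at roughly $q'=\sqrt{n}$. The only step worth tightening is your bound $|S|\le q'-1$, which implicitly assumes $q'$ is an integer — harmless, since one may first replace $q'$ by $\lceil q'\rceil$ without changing the symmetric game; the paper sidesteps this by arguing $w(S)\le|S|<q'<\sqrt{n}$ directly in its second case.
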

\begin{proof}
  If $q'\ge \sqrt{n}$ we take weights of $\frac{1}{\sqrt{n}}$ for all voters so that each winning coalition has a weight of at
  least one and the grand coalition a weight of $\sqrt{n}$. In the other cases we take weights $\frac{w_i}{q}$ for the voters
  so that each winning coalition has a weight of at least $1$. W.l.o.g.\ we assume $w_i\le q$ so that the new weights
  are at most $1$. A losing coalition with weight larger than one must fail the criterion of the symmetric game so that
  it consists of less than $\sqrt{n}$ members. Thus the weight of each losing coalition is less than $\sqrt{n}$.
\end{proof}

For large consensus $q'$ the critical threshold value is bounded from above by $\frac{n}{q'}$, since we can assign weights
of $\frac{1}{q'}$ to all voters. We remark that complete simple games 
$\left((n_1,n_2),(m_1,m_2)\right)$ with two equivalence classes of voters and one
shift-minimal winning vector are games with consensus and thus have a dimension of at most two\footnote{Complete simple games with
one shift-minimal winning vector and more than two equivalence classes of voters can have dimensions larger than two and as large
as $\frac{n}{4}$ \cite{1151.91021}.}. As representation we may use the intersection of $[m_1+m_2;1,\dots,1]$ and 
$[m_1n_2+m_2;n_2,\dots,n_2,1,\dots,1]$.

\begin{lemma}
  The critical threshold value $\mu(\chi)$ of a complete simple game $\chi\in\mathcal{C}_n$ with two types of voters is
  at most $\sqrt{n}+1$.
\end{lemma}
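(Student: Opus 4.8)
The plan is to certify the bound by exhibiting, in each of two cases, an explicit weighting with one weight per equivalence class, $w_1\ge w_2\ge 0$, which is legitimate by Lemma~\ref{lemma_weights_csg} and feeds directly into Lemma~\ref{lemma_lp_critical_alpha_complete_simple_game}. Write the shift-minimal winning vectors as $(a_1,b_1),\dots,(a_r,b_r)$ and set $s_i:=a_i+b_i$. By the lexicographic ordering (iv) of Theorem~\ref{thm_csg_characterization} I may assume $a_1\ge a_2\ge\dots\ge a_r$, and incomparability (ii) upgrades this to $a_1>a_2>\dots>a_r$ while forcing the totals to increase, $s_1<s_2<\dots<s_r$. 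Hence $s_1$ is the minimum size of a winning coalition, $a_1=\max_i a_i$ is the largest first-type demand, $a_1\le s_1$, and condition (iii) gives $a_1\ge 1$.

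First I would split on $s_1$ relative to $\sqrt{n}$. If $s_1\ge\sqrt{n}$, I take the uniform weights $w_1=w_2=\frac{1}{s_1}$. Every winning coalition has at least $s_1$ members and thus weight at least $1$; every losing coalition misses at least one voter, so it has at most $n-1$ members and weight at most $\frac{n-1}{s_1}\le\frac{n-1}{\sqrt{n}}<\sqrt{n}$. This already beats $\sqrt{n}+1$.

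The substantive case is $s_1<\sqrt{n}$. Here the key structural observation I would establish is that \emph{every losing coalition contains fewer than $\sqrt{n}$ first-type voters}: if a losing coalition has $x\ge a_1$ first-type voters it cannot dominate $(a_1,b_1)$, so its total size is at most $s_1-1$, giving $x\le s_1-1<\sqrt{n}$; and if $x<a_1\le s_1<\sqrt{n}$ the claim is immediate. I then distinguish whether a pure second-type winning vector exists. If $a_r\ge 1$, I take $w_1=1,\ w_2=0$: every winning coalition has at least $a_r\ge 1$ first-type voters, hence weight at least $1$, whereas every losing coalition has weight exactly its number of first-type voters, which is below $\sqrt{n}$. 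If $a_r=0$, then $(0,s_r)$ is the unique pure second-type shift-minimal winning vector, so every losing coalition has at most $s_r-1$ second-type voters; here I take $w_1=1,\ w_2=\frac{1}{s_r}$. The vector $(0,s_r)$ then has weight exactly $1$, each other vector has weight $a_i+\frac{b_i}{s_r}\ge a_i\ge 1$, and every losing coalition has weight $x+\frac{y}{s_r}<\sqrt{n}+\frac{s_r-1}{s_r}<\sqrt{n}+1$.

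I expect the main obstacle to be the bookkeeping of this second case: deriving the ``fewer than $\sqrt{n}$ first-type voters'' bound cleanly from incomparability, and isolating the pure second-type vector $(0,s_r)$ so that the second-type contribution of any losing coalition stays strictly below $1$ — this is precisely where the additive $+1$ enters. Checking $w_1\ge w_2\ge 0$ and the feasibility of the winning inequalities in each subcase is routine.
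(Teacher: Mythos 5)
Your proof is correct, and it takes a genuinely different route from the paper's. The paper first splits off the case $r=1$ (via the consensus lemma), then in the remaining cases fixes the weights $w_1=\sqrt{n},\,w_2=1$ (when the minimal first-coordinate $c$ of a shift-minimal winning vector is positive) or $w_1=\sqrt{d},\,w_2=1$ (when $c=0$), and verifies the bound through a fairly delicate computation of an effective quota $q$ and the inequality $w(L)\le q(\sqrt{n}+1)$ for each type of losing vector. You instead make a preliminary split on whether the minimum winning-coalition size $s_1=a_1+b_1$ is at least $\sqrt{n}$: if so, uniform weights $1/s_1$ finish immediately since the grand coalition is winning; if not, the incomparability of the shift-minimal winning vectors (which, as you correctly derive, forces $a_1>\dots>a_r$ and $s_1<\dots<s_r$) gives the clean structural fact that every losing coalition has fewer than $\sqrt{n}$ first-type voters, after which the weights $(1,0)$ or $(1,1/s_r)$ suffice. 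Your argument buys simpler weight certificates, avoids the separate $r=1$ and $n\ge 4$ reductions, and actually yields the slightly sharper bound $\sqrt{n}+\frac{s_r-1}{s_r}<\sqrt{n}+1$ (and just $\sqrt{n}$ in two of your three cases); the paper's choice of weights, on the other hand, is uniform across its two subcases and requires no preliminary size dichotomy. All the steps you flag as routine (the ordering $w_1\ge w_2\ge 0$, reduction to shift-minimal winning vectors via Lemma~\ref{lemma_lp_critical_alpha_complete_simple_game}, and $a_1\ge 1$ from condition (iii) of Theorem~\ref{thm_csg_characterization}) do check out.
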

\begin{proof}
  If $\chi$ has only one shift-minimal winning vector we can apply Lemma~\ref{lemma_games_with_consensus}. Since complete simple
  games with less than four voters are weighted we can assume $n\ge 4$. So let $m_1=(a,b)$ the shift-minimal winning
  vector with maximal $a$ and $m_2=(c,d)$ the shift-minimal winning vector with minimal $c$. Depending on
  the values of $a$ and $c$ we will provide suitable weights $w_1$ and $w_2$ such that each winning coalition has a weight of at least $q>0$ and 
  each losing coalition has a weight of at most $q\cdot(\sqrt{n}+1)$, i.e.\ the proposed weights have to be normalized in order to fit
  into the framework of a quota $q=1$.
  
  If $c\ge 1$ we set $w_1=\sqrt{n}$ and $w_2=1$. Every shift-minimal winning vector $(e,f)\neq(a,b)$ must
  fulfill $c\le e\le a$  due to the definition of $a,c$ and $e+f\ge a+b+1$ since otherwise $(a,b)$
  would not be a shift-minimal winning vector. With this we have
  $$
    ew_1+fw_2\ge ew_1+(a+b+1-e)w_2\ge c\sqrt{n} +(a+b+1-c).
  $$
  Similarly, we obtain
  $$
    aw_1+bw_2=c\sqrt{n}+a-c+b+(\underset{\ge 1}{\underbrace{a-c}})\cdot(\underset{\ge 1}{\underbrace{\sqrt{n}-1}})\ge c\sqrt{n} +(a+b+1-c).
  $$
  Thus it suffices to show that each losing coalition has a weight of at most
  $$\left(c\sqrt{n} +(a+b+1-c)\right)\cdot\left(\sqrt{n}+1\right)\ge n+a\sqrt{n}+b\sqrt{n}.$$
  Let $(g,h)$ be a losing coalition so that $(g,h)\nsucceq(a,b)$ and $(g,h)\nsucceq(c,d)$.
  If $g\le c$ then $h\le n_2\le n-a$ and we have
  $$
    gw_1+hw_2\le c\sqrt{n}+n-a\le n+a\sqrt{n}.
  $$
  If $g\ge a$ then $g+h\le a+b-1$ since otherwise $(g,h)\succeq(a,b)$.
  With this we have
  $$
    gw_1+hw_2\le (a+b-1)\sqrt{n}\le a\sqrt{n}+b\sqrt{n}.
  $$
  If $c\le g<a$ then $g+h\le c+d-1$ since otherwise $(g,h)\succeq(c,d)$.
  With this we have
  $$
    gw_1+hw_2\le (a-1)\sqrt{n}+(c+d-a)\le n+a\sqrt{n}.
  $$
  
  \medskip
  
  If $c=0$ we set $w_1=\sqrt{d}$, where $d\ge a+b+1\ge 2$, and $w_2=1$. Let $(e,f)$ be a winning
  and  $(g,h)$ be a losing coalition. Similarly, as before we have $e+f\ge a+b$ so that
  $$
    ew_1+fw_2\ge\sqrt{d}+a+b-1.
  $$
  It suffices to show that each losing coalition has a weight of at most
  \begin{eqnarray*}
    \left(\sqrt{d}+a+b-1\right)\cdot\left(\sqrt{n}+1\right) &\ge& \underset{\ge d}{\underbrace{\sqrt{dn}-\sqrt{n}+\sqrt{d}}}+(a+b)\sqrt{n}
    +\underset{\ge 0}{\underbrace{a+b-1}}\\
    &\ge& d+(a+b)\sqrt{n}.
  \end{eqnarray*}
  If $g\ge a$ then $g+h\le a+b-1$, since otherwise $(g,h)\succeq(a,b)$, and we have
  $$
    gw_1+hw_2\le (a+b-1)\sqrt{d}\le (a+b)\sqrt{n}.
  $$
  If $c\le g<a$ then $g+h\le c+d-1$, since otherwise $(g,h)\succeq(c,d)$, and we have
  $$
    gw_1+hw_2\le (a-1)\sqrt{d}+(c+d-a)\le a\sqrt{n}+d.
  $$
\end{proof}

We remark that complete simple games with one type of voters are weighted and thus have a critical threshold value of $1$.

\begin{lemma}
  The critical threshold value $\mu(\chi)$ of a complete simple game $\chi\in\mathcal{C}_n$ with one shift-minimal winning
  vector $\widetilde{a}$ is at most $\sqrt{n}$.
\end{lemma}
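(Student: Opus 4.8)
The plan is to exhibit, for every such game, an explicit nonnegative non-increasing weighting that is feasible for the linear program of Lemma~\ref{lemma_lp_critical_alpha_complete_simple_game} with target value at most $\sqrt n$. Write $\widetilde a=(a_1,\dots,a_t)$ for the unique shift-minimal winning vector and set $A_k=\sum_{i=1}^k a_i$, $A=A_t$, and $M_k=\sum_{i=1}^k n_i$. Since $\widetilde a$ is the only element of $\overline{\mathcal W}$, a vector $\widetilde b$ is winning exactly when $\widetilde b\succeq\widetilde a$, and losing exactly when $B_k:=\sum_{i\le k}b_i\le A_k-1$ for some $k$. For any weights $w_1\ge\dots\ge w_t\ge 0$ this structure gives two facts I would lean on: any winning vector has weight at least $w(\widetilde a)=\sum_i a_iw_i$ (by Abel summation, using $\widetilde b\succeq\widetilde a$ and monotone weights), so the winning constraints reduce to $\sum_i a_iw_i\ge 1$; and a losing vector failing at level $k$ has weight at most $(A_k-1)w_1+(n-M_k)w_{k+1}=:P_k$, bounding the top $k$ classes by $w_1$ and the at most $n-M_k$ remaining voters by $w_{k+1}$ (with $w_{t+1}:=0$). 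Thus it suffices to produce non-increasing weights with $\sum_i a_iw_i\ge 1$ and $P_k\le\sqrt n$ for all $k$.

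First I would dispose of the degenerate cases: if $t=1$, or if $A=1$ (which by condition~(a).(iii) of Theorem~\ref{thm_csg_characterization}, forcing $a_i\ge 1$ for $i<t$, can only occur for $t\le 2$), the game is weighted and $\mu(\chi)=1\le\sqrt n$. Next, if $A\ge\sqrt n$ I would simply take the uniform weights $w_i=1/A$: every winning vector has total at least $A$ and hence weight at least $1$, while every coalition has total at most $n$ and hence weight at most $n/A\le\sqrt n$.

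The interesting regime is $2\le A<\sqrt n$, where the plan is to load almost all the weight onto the first class. I would set $w_1=\tfrac{\sqrt n}{A-1}$; since $A<\sqrt n$ this already exceeds $1$, and because every winning vector satisfies $b_1\ge A_1=a_1\ge 1$ it contains a first-class voter, so the winning constraints hold automatically with no contribution from the tail. With $w_1$ fixed, the requirement $P_k\le\sqrt n$ becomes $w_{k+1}\le U_k:=\tfrac{\sqrt n\,(A-A_k)}{(A-1)(n-M_k)}$ for $1\le k\le t-1$, while $P_t=(A-1)w_1=\sqrt n$ holds with equality.

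The hard part is that the numbers $U_k$ need not decrease in $k$, so one cannot simply set $w_{k+1}=U_k$ and keep the weighting non-increasing. My fix is to define $w_i:=\min_{1\le j\le i-1}U_j$ for $2\le i\le t$. This sequence is non-increasing by construction, yet still satisfies $w_{k+1}\le U_k$, so $P_k\le\sqrt n$ for every $k$; the only remaining checks are $w_2=U_1\le w_1$, which reduces to the valid inequality $\sum_{i\ge 2}a_i\le\sum_{i\ge 2}n_i$, and $w_t\ge 0$, which is immediate. Feeding this weighting into Lemma~\ref{lemma_lp_critical_alpha_complete_simple_game} then yields $\mu(\chi)\le\sqrt n$.
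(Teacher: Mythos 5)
Your proof is correct, and it coincides with the paper's argument in its first branch: when $A:=\sum_i a_i\ge\sqrt{n}$ both you and the authors use (essentially) uniform weights, winning vectors having at least $A$ members and every coalition at most $n$. Where you diverge is the complementary case $A<\sqrt{n}$. The paper's treatment there is a one-liner: set $w_1=1$ and $w_2=\dots=w_t=0$; winning vectors have $b_1\ge a_1\ge 1$ (condition (a)(iii) of the parameterization theorem), while any losing vector must have $l_1\le A-1<\sqrt{n}$, since $l_1\ge A$ would force every partial sum of $\widetilde{l}$ to dominate the corresponding partial sum of $\widetilde{a}$ and hence make $\widetilde{l}$ winning. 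Your construction instead sets $w_1=\sqrt{n}/(A-1)$ and builds a non-increasing tail $w_{i}=\min_{j<i}U_j$ calibrated so that each level-$k$ failure bound $P_k$ is at most $\sqrt{n}$; the running minimum is a clean fix for the non-monotonicity of the $U_k$, and all the checks ($U_1\le w_1$ via $\sum_{i\ge 2}a_i\le\sum_{i\ge 2}n_i$, nonnegativity, the winning constraint via $b_1\ge 1$ and $w_1>1$) go through. So your argument is valid but does considerably more work than necessary in this regime; the only thing the extra machinery buys is a feasible point with $P_t=\sqrt{n}$ met with equality, whereas the paper's choice actually certifies the slightly stronger bound $A-1$ on losing weights there. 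Note also that your key observation that a losing vector ``failing at level $k$'' has weight at most $(A_k-1)w_1+(n-M_k)w_{k+1}$ is exactly the kind of partial-sum bookkeeping the paper's simpler choice of weights lets one avoid, since with a single nonzero weight only the level-$t$ failure matters.
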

\begin{proof}
  By $(n_1,\dots,n_t)$ we denote the numbers of voters in the $t\ge 2$ equivalence classes
  of voters and by $(a_1,\dots,a_t)$ the unique shift-minimal winning vector $\widetilde{a}$.
  
  If $\sum\limits_{i=1}^t a_i\ge \sqrt{n}$ we set $w_i=\frac{1}{\sqrt{n}}$ for all $1\le i\le t$ and have $w(\widetilde{a})\ge 1$.
  Since with these weights we have $w(N)\le \sqrt{n}$, every losing coalition has a weight of at most $\sqrt{n}$ and
  we have a critical threshold value of at most $\sqrt{n}$.
  
  In the remaining cases we have $\sum\limits_{i=1}^t a_i\le \sqrt{n}$. Due to condition (a)(iii) of
  Theorem~\ref{thm_csg_characterization} we have $a_1\ge 1$.  We set $w_1=1$ and $w_2=\dots=w_t=0$ and
  have $w(\widetilde{a})\ge 1$. For every losing vector $\widetilde{l}=(l_1,\dots,l_t)$ we have
  $l_1<\sum\limits_{i=1}^t a_i\le\sqrt{n}$ since otherwise we would have $\widetilde{a}\prec \widetilde{l}$. Thus
  each losing coalition has a weight of at most $\sqrt{n}$ and the critical threshold value is bounded from above
  by $\sqrt{n}$ in this case.
  %
\end{proof}

So, we have an upper bound of $\sqrt{n}$ for the critical threshold value for complete simple games on $n$~voters in several
subcases. For the general case of Conjecture~\ref{conjecture_spectrum_c_largest_complete_simple_game} we can provide only a
first preliminary bound showing that $\s_\mathcal{C}(n)$ asymptotically grows slower than $\s_\mathcal{B}(n)$ so that the
maximum critical threshold value in some sense states that complete simple games are \textit{nearer} to (roughly)
weighted voting games than simple games.

\begin{theorem}
  The critical threshold value $\mu(\chi)$ of a complete simple game $\chi\in\mathcal{C}_n$ is in
  $O\!\left(\frac{n\cdot\log\log n}{\log n}\right)$.
\end{theorem}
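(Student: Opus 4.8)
The plan is to read off $\mu(\chi)$ from the packing problem dual to the linear program of Lemma~\ref{lemma_lp_critical_alpha_complete_simple_game}, exactly as in Lemma~\ref{lemma_lower_bound_critical_alpha_value_simple_games}. Taking $W'=\mathcal{W}$ and $L'=\mathcal{L}$, strong duality turns the lower bound there into an exact value, so that
\[
  \mu(\chi)=\max\Big\{\sum_{S\in\mathcal{W}}u_S \;:\; \sum_{S\ni i}u_S\le\sum_{T\ni i}v_T\ \forall i\in N,\ \sum_{T\in\mathcal{L}}v_T\le 1,\ u,v\ge 0\Big\}.
\]
Summing the voter constraints over $i$ and writing $L$ for the size of a largest losing coalition gives $\sum_{S}|S|\,u_S\le\sum_{T}|T|\,v_T\le L$. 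Hence, if every minimal winning coalition carrying positive weight has cardinality at least $s$, then $\mu(\chi)=\sum_S u_S\le L/s$. Combining this with the elementary bound $\mu(\chi)\le n-L$ (assign weight $0$ to a largest losing coalition and $1$ elsewhere, the lemma preceding Corollary~\ref{cor_largest_alpha_simple_games}) and optimising over the unknown $L$ yields $\mu(\chi)\le n/(s+1)$ as soon as all minimal winning coalitions have size at least $s$. This dual estimate is robust and uses no completeness.

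Thus the whole burden falls on \emph{small} minimal winning coalitions: if one could guarantee $s=\Omega\!\left(\log n/\log\log n\right)$, the first step would already finish the proof. Here completeness of $\chi$ enters. A minimal winning coalition of small cardinality corresponds, via Theorem~\ref{thm_csg_characterization}, to a shift-minimal winning vector supported on the most influential equivalence classes, so a short winning coalition forces the top classes to be few but heavy. The idea is to peel these classes off one scale at a time — an analogue of the reduction in Lemma~\ref{lemma_reduction_sg} — handling the coalitions in each size range by a separate, concentrated weighting and then superimposing the weightings additively, which preserves all winning constraints since the weights are non-negative.

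The quantitative heart of the argument is to bound the number of scales that this peeling can produce. Each genuinely new small scale multiplies the admissible sizes of the heavy classes, and since all these sizes are bounded by the number of voters, the relevant product stays below $n$. A chain of integers $1=d_0\mid d_1\mid\dots\mid d_k\le n$ whose consecutive ratios are forced to be distinct (so that the product is at least the primorial $p_1p_2\cdots p_k$) has length $k=O(\log n/\log\log n)$ by the prime number theorem. Spreading a unit weight budget over these $k=\Theta(\log n/\log\log n)$ scales then lets every winning coalition reach weight $1$ while no losing coalition accrues more than $O(n)/k=O\!\left(n\log\log n/\log n\right)$, which is the asserted bound.

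The \textbf{main obstacle} is precisely this counting step: one must show that the peeling of the small winning coalitions can indeed be organised into at most $O(\log n/\log\log n)$ multiplicatively separated scales, and that the superimposed weighting keeps the contribution of every losing coalition below $n/k$ simultaneously. Turning the soft observation ``short winning coalitions force heavy top classes'' into the rigorous primorial bound on the number of scales, and controlling the losing weight uniformly across scales, is where the real work lies; the dual bound $\mu(\chi)\le n/(s+1)$ is what makes the reduction to this counting problem possible in the first place.
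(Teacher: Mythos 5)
Your opening step is correct and is a genuinely nice observation: summing the voter constraints of the dual in Lemma~\ref{lemma_lower_bound_critical_alpha_value_simple_games} gives $\sum_S |S|\,u_S \le \sum_T |T|\,v_T \le L$, hence $\mu(\chi)\le \max(1,L/s)$ whenever every minimal winning coalition has cardinality at least $s$, and combining this with $\mu(\chi)\le n-L$ yields $\mu(\chi)\le n/(s+1)$. This cleanly generalizes Corollary~\ref{cor_largest_alpha_simple_games} (which is the case $s=2$) and, as you note, uses no completeness. If you could force $s=\Omega(\log n/\log\log n)$ after some reduction, you would be done.

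But the second half of your argument is where the theorem actually lives, and it is not a proof. You assert that the short minimal winning coalitions can be ``peeled'' into $k=O(\log n/\log\log n)$ multiplicatively separated scales, justified by a chain $1=d_0\mid d_1\mid\dots\mid d_k\le n$ with distinct consecutive ratios and a primorial bound. Nothing in the structure of a complete simple game produces such a divisibility chain: the sizes of shift-minimal winning vectors, or of the equivalence classes $N_1,\dots,N_t$, need not divide one another, need not have distinct ratios, and there can be up to $n$ of them. The primorial estimate is the right \emph{shape} of answer reverse-engineered from the statement, but no mechanism in your sketch forces it. The superposition step has the same problem: if each scale's weighting must give its own winning coalitions weight at least $1$ (so those weights cannot be scaled down by $1/k$), then a single losing coalition can in principle collect weight from all $k$ scales, and you give no argument bounding that total by $O(n)/k$ rather than by $k$ times a per-scale bound. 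You flag this yourself as the ``main obstacle,'' which is an honest assessment: the proposal reduces the theorem to an unproved counting claim rather than proving it.

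For comparison, the paper avoids any decomposition. It fixes the single geometric weighting $w_i=q^{i-1}$ with $q=1-\frac{\log n}{n\log\log n}$, takes $W$ a winning coalition of minimum weight and $L$ a losing coalition of maximum weight, and bounds $w(L)/w(W)$ directly by a lexicographic case analysis: completeness guarantees that if $W$ is lexicographically smaller than $L$ then some prefix $\{1,\dots,j\}$ contains strictly more voters of $W$ than of $L$, and the geometric decay turns this single index $j$ into the bound $q^{-n}+\frac{1}{1-q}$, with the choice of $q$ balancing the two terms at $\frac{n\log\log n}{\log n}$. If you want to salvage your route, the realistic option is to keep your dual bound $n/(s+1)$ and then prove a concrete structural statement about complete simple games with a minimal winning coalition of size $s$ for small $s$ (e.g.\ via Theorem~\ref{thm_csg_characterization}), but at present that statement is missing.
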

\begin{proof}
  As weights we choose a slowly decreasing geometric series $w_i=q^{i-1}$ for all $1\le i\le n$ where
  $q=1-\frac{\log n}{n\cdot\log \log n}$. With this we have $0\le q< 1$ and $\frac {1}{1-q}=\frac{n\cdot\log\log n}{\log n}$.
  Now, let $W$ be a winning coalition with the minimum weight and $L$ be a losing coalition with the
  maximum weight. In the following we will show $\frac{w(L)}{w(W)}\le\frac{n\cdot\log\log n}{\log n}$. To deduce this
  bound we will compare the weights of a few subsets of consecutive voters. In order to keep the necessary number of such
  subsets small, we set $\widetilde{W}:=W\backslash (W\cap L)$ and $\widetilde{L}:=L\backslash (W\cap L)$, i.e.\ we technically
  remove common voters. We remark that $\tilde{W}$ needs not be a winning coalition.
  Due to the inequality
  $$
    \frac{x}{y}\ge\frac{x+c}{y+c}
  $$
  for $x\ge y>0$ and $c\ge 0$ it suffices to provide an upper bound for $\frac{w(\widetilde{L})}{w(\widetilde{W})}$.
 
  At first, we consider the case when $W$ is lexicographically larger than $L$. Let $j$ be the voter with the minimal index
  (and so the maximal weight) in
  $\widetilde{W}$. With this we set $W'=\{j\}$, $L'=\{j+1,\dots,n\}$ and have $w(\widetilde{W})\ge w(W')$,
  $w(\widetilde{L})\le w(L')$ so that $\frac{w(L)}{w(W)}$ is upper bounded by
  $$
    \frac{w(\widetilde{L})}{w(\widetilde{W})}\le\frac{w(L')}{w(W')}=\frac{q(1-q^{n-j})}{1-q}\le \frac{1}{1-q}=\frac{n\cdot\log\log n}{\log n}.
  $$
  
  \noindent
  If $W$ is lexicographically smaller than $L$ then let $j$ be an index with $|\underset{=:k_1}{\underbrace{\widetilde{W}}\cap\{1,\dots,j\}}|$ $>
  |\underset{=:k_2}{\underbrace{\widetilde{L}}\cap\{1,\dots,j\}}|$. With this we set $L':=\{1,\dots,k_2\}\cup\{j+1,\dots,n\}$ and
  $W':=\{j-k_1+1,\dots,j\}$
  fulfilling $w(\widetilde{W})\ge w(W')$ and $w(\widetilde{L})\le w(L')$. Since $k_1\ge k_2\ge 1$,
  $$
    w(L')=\sum_{i=1}^{k_2}q^{i-1}+\sum_{i=j+1}^n q^{i-1}=\frac{1-q^{k_2}}{1-q}+q^j\cdot\frac{1-q^{n-j}}{1-q}
  $$
  and $w(W')=q^{j-k_1+1}\cdot\frac{1-q^{k_1}}{1-q}\ge q^{j-k_1+1}$ we have
  $$
    \frac{w(L)}{w(W)}\le\frac{w(\widetilde{L})}{w(\widetilde{W})}\le
    \frac{w(L')}{w(W')}\le q^{k_1-j-1}+\frac{q^j\cdot\frac{1}{1-q}}{q^{j-k_1+1}}\le q^{-j}+\frac{1}{1-q}\le q^{-n}+\frac{1}{1-q}.
  $$
  To finish the proof we show $q^{-n}\in O\!\left(\frac{n\cdot\log \log n}{\log n}\right)$. From $\frac{x}{1+x}\le \log (1+x)\le x$
  for $x>-1$ we conclude $2x\ge\frac{x}{1-x}\ge -\log(1-x)\ge x$ for $\frac{1}{2}\le x\le 1$. Thus for large enough $n$ we have
  $$
  \log\!\left(q^{-n}\right)\le n\cdot \left(-\log\left(1-\frac{\log n}{n\cdot\log\log n}\right)\right)\le n\cdot \frac{2\log n}{n\cdot\log\log n}\le
  \frac{2\log n}{\log\log n}
  $$
  and $\frac{2\log n}{\log\log n}\le \log n-\log\log n+\log\log\log n=\log\!\left(\frac{n\cdot\log \log n}{\log n}\right)$.
\end{proof}

\bigskip

In the context of the conjectured upper bound of $O(\sqrt{n})$ for $c_\mathcal{C}(n)$ we find it remarkable that the 
cost of stability $CoS(f)$ of any super-additive, see Definition~\ref{def_super_additive}, Boolean game $f\in\mathcal{B}_n$
is upper bounded by $\sqrt{n}-1$, see \cite{Bachrach:2009:CSC:1692490.1692502}. If $f$ is additionally anonymous, then the
authors have proven the tighter bound $CoS(f)\le 2$. This coincides with the situation for the critical threshold value. Here
we may consider the super-additive anonymous Boolean game $f\in\mathcal{B}_n$, where coalitions of size
$\left\lceil\frac{n+1}{2}\right\rceil$ are winning and the grand coalition $N$ is losing.

\section{An integer linear programming approach to determine the maximal critical threshold value}
\label{sec_ilp_max_alpha}

\noindent
In principle it is possible to determine the maximal critical threshold value $\s_\mathcal{S}(n)$ for a given integer $n$ by
simply solving the stated linear program from Proposition~\ref{prop_lp_critical_alpha_simple_game} for all simple games $\chi\in\mathcal{S}_n$.
Since for $n\le 8$ there are $1$, $4$, $18$, $166$, $7\,579$, $7\,828\,352$, $2\,414\,682\,040\,996$, and $56\,130\,437\,228\,687\,557\,907\,786$
simple games,
an exhaustive search seems to be hopeless even for moderate $n$ (of course theoretical results may help to reduce the number of simple games
which need to be checked). For $n=9$ only the lower bound $10^{42}$ is known.

So, alternatively we will formulate $\s_\mathcal{S}(n)$ as the solution of an optimization problem in the following to avoid exhaustive
enumeration. It is possible to describe the set of monotone Boolean functions as integer points of a polyhedron, see e.g.\
\cite{inverse_power_index_problem}: For each subset $S\subseteq N$ we introduce a binary variable $x_S$ and use the constraints
$x_\emptyset=0$, $x_N=1$, and $x_{S\backslash\{i\}}\le x_S$ for all $\emptyset\neq S\subseteq N$, $i\in S$ to model a simple game via
$\chi(S)=x_S$. (We have to remark that this ILP formulation is very \textit{symmetric}.) In this framework it is very easy to add
additional restrictions. Methods to restrict the underlying games to complete simple games or weighted voting games are outlined
in \cite{inverse_power_index_problem}. The restriction to e.g.\ \textit{proper} simple games can be modeled via
$x_S+x_{N\backslash S}\le 1$ for all $S\subseteq N$. Similarly, \textit{strong} simple games can be modeled by using the
constraints $x_S+x_{N\backslash S}\ge 1$ for all $S\subseteq N$.

So the problem of determining $\s_\mathcal{S}(n)$ can be stated as the following optimization problem: Maximize over all simple games
with $n$ voters the minimum $\alpha$ of the linear program~(\ref{lp_critical_alpha_value}). Since this is a two-level
optimization problem, we have to reformulate the problem in order to apply integer linear programing techniques.

In order to determine $\s_\mathcal{S}(n)$ we cannot maximize $\alpha$ directly since we have $\chi\in\mathcal{T}_{\lambda\alpha}\cap \mathcal{S}_n$
for all $\lambda\ge 1$ if $\chi\in\mathcal{T}_{\alpha}\cap\mathcal{S}_n$. To specify the minimum value $\alpha$ for a given simple game $\chi$
we can also maximize its corresponding dual linear program of~(\ref{lp_critical_alpha_value}) whose optimal solution is $\alpha$.

If we drop the restriction $\alpha\ge 1$ and assume $w_i\ge 0$, the dual program for a simple game $\chi$ is given by 
$$
\begin{array}{rl}
  \text{Max} & \sum\limits_{S\in W} u_S  \\
   & \sum\limits_{S\in W:i\in S} u_S-\sum\limits_{S\in L:i\in S} v_S \le 0\quad \forall i\in N\\
   & \sum\limits_{S\in L} v_S \le 1\\
   & u_S\ge 0 \quad\forall S\in W\\
   & v_S\ge 0 \quad\forall S\in L,
\end{array}
$$
where $W$ denotes the set of winning coalitions and $L$ denotes the set of losing coalitions. As outlined in Section~\ref{sec_preliminaries} the optimal target value
$\sum\limits_{S\in W} u_S$ might take values smaller than $1$ (but being non-negative) which correspond to a critical threshold value of $\mu(\chi)=1$.

The next step is to replace the externally given sets $W$ and $L$ by variables such that the possible sets correspond to simple games. Using our previously defined
binary variables $x_S$ this is rather easy:
$$
\begin{array}{rl}
  \text{Max} & \sum\limits_{S\subseteq N} x_S\cdot u_S  \\
   & \sum\limits_{\{i\}\subseteq S\subseteq N} x_S\cdot u_S-\sum\limits_{\{i\}\subseteq S\subseteq N} \left(1-x_S\right)\cdot v_S \le 0\quad \forall i\in N\\
   & \sum\limits_{S\subseteq N} \left(1-x_S\right)\cdot v_S \le 1\\
   & x_\emptyset=0\\
   & x_N=1\\
   & x_{S\backslash\{i\}}\le x_S\quad  \forall \emptyset\neq S\subseteq N\\
   & u_S\ge 0 \,\,\quad\quad\quad\forall S\subseteq N\\
   & v_S\ge 0 \,\,\quad\quad\quad\forall S\subseteq N\\
   & x_S\in\{0,1\}\,\,\quad\forall S\subseteq N,
\end{array}.
$$

The problem is a quadratically constrained quadratic program (QCQP) with binary variables or more generally a mixed-integer quadratically
constrained program (MIQCP). There are solvers, like e.g.\ \texttt{ILOG CPLEX}, that can deal with these
problems efficiently whenever the target function and the constraints are convex. Unfortunately, neither our target function nor the
feasibility set is convex.
Thus in order to solve this optimization problem directly, we have to utilize a solver that can deal with non-convex mixed-integer
quadratically constrained programs like e.g.\ \texttt{SCIP}, see e.g.\ \cite{BertholdGleixnerHeinzVigerske2011TR,BeHeVi09}\footnote{We
have to remark that currently SCIP is not capable of solving the stated problem without further information because there are some problems
if the intermediate LP relaxations are unbounded. So one has to provide upper and lower bounds for the continuous variables $u_S$ and $v_S$.}.

This works in principle, but problems become computationally infeasible very quickly. By disabling preprocessing we can force \texttt{SCIP}
to use general MIQCP-tech\-niques. Solving the problem Boolean functions with $f(\emptyset)=0$ and $n=3$ took 0.07~seconds and 43~b\&b-nodes, for $n=4$
it took 8.45~seconds and 15770~b\&b-nodes, and for $n=5$ we have aborted the solution process after 265~minutes and $1.6\cdot 10^6$~nodes, where more
than 33~GB of memory was used. 

By enabling preprocessing \texttt{SCIP} is able to automatically find a reformulation as a binary linear program. This way \texttt{SCIP} can solve the
instance for $n=8$ in 2.9~seconds in the root node but will take more than 211~minutes, 373000~nodes, and 1.8 GB of memory to solve the instance for $n=9$. 


Since often binary linear programs are easier to solve than binary quadratic problems, we want to reformulate our binary quadratic optimization problem into
a binary linear one. There are several papers dealing with reformulations of MIQCPs into easier problems, see e.g.\ \cite{reformulation}. Here we want to present
a custom-tailored approach based on some techniques that are quite standard in the mixed integer linear programing community (but we will outline them
nevertheless). Using this formulation, \texttt{SCIP} needed only 18.72~seconds to solve the instance for $n=15$ without applying branch\&bound. We would like to
remark that \texttt{CPLEX} was even faster using only 5.61~seconds of computation time.

A quite general technique to get rid of logical implications are so called Big-M constraints, see e.g.\ \cite{Koch2004b}. To
explain the underlying concept we consider a binary variable $x\in\{0,1\}$, a real-valued variable $y$, and a \emph{conditional} inequality
$y\le c$ for a constant $c$, which only needs to be satisfied if $x=1$. The idea is to use this inequality, but to modify its right-hand side with a constant
times $(1-x)$:
$$
  y\le c +(1-x)\cdot M.
$$
For $x=1$ this inequality is equivalent to the desired \emph{conditional} inequality. Otherwise the new inequality is equivalent to $y\le c+M$, which is
satisfied if $M$ is large enough. Given a known upper bound $y\le u$, where possibly $u\gg c$, it suffices to choose $M=u-c$.

Now we want to apply this technique in a more sophisticated way, to remove the non-linear term $x_S\cdot u_S$, where $x_S\in\{0,1\}$ and
$u_S\in [0,\beta]$. We replace the term $x_S\cdot u_S$ by the variable
$z\ge 0$ using the constraints $z\le \beta x_S$, $z\le u_S$, and $z\ge u_S-\beta \left(1-x_S\right)$. If $x_S=1$ these inequalities state
that $z=x_S\cdot u_S=u_S$ must hold and for $x_S=0$ they imply $z=x_S\cdot u_S=0$. Thus one extra variable and three additional inequalities
are necessary for each term of the form $x_S\cdot u_S$ or $x_S\cdot v_S$. The LP relaxation gets worser with increasing $\beta$, the so-called
Big-M constant. Of course in general, it may be hard to come up with a concrete bound $\beta$. In our case it is not too hard to prove
$u_S,v_S\le 1$: If $x_T=0$ then from $v_S\ge 0$ for all $S\subseteq N$ and $\sum\limits_{S\subseteq N} \left(1-x_S\right)\cdot v_S \le 1$
we conclude $v_T\le 1$. Otherwise $v_T$ does not occur anywhere in the optimization problem and $v_T\le 1$ is a valid inequality. Similarly,
if $x_T=0$ then $u_T$ does not appear anywhere and on the other hand for $x_T=1$ we have
$$
  u_T\le \sum\limits_{\{i\}\subseteq S\subseteq N} \left(1-x_S\right)\cdot v_S\le \sum\limits_{S\subseteq N} \left(1-x_S\right)\cdot v_S \le 1.
$$

Due to the special structure of our problem we can reformulate our problem without additional variables and fewer additional constraints.
The main idea is to use the term $u_S$ instead of $x_S\cdot u_S$ and to ensure that we have $u_S=0$ for $x_S=0$. Similarly, we replace the
products $\left(1-x_S\right)\cdot v_S$ by $v_S$ and ensure that we have $v_S=0$ if $x_S=1$.

\begin{eqnarray}
  \max && \sum\limits_{S\subseteq N} u_S\\
  x_\emptyset &=& 0\\
  x_N &=& 1\\
  x_S-x_{S\backslash\{i\}} &\ge& 0\quad\forall \emptyset\neq S\subseteq N, i\in S\label{ie_sg}\\
  \sum\limits_{\{i\}\subseteq S\subseteq N} u_S-\sum\limits_{\{i\}\subseteq S\subseteq N} v_S &\le& 0\quad\forall i\in N\label{ie_dual_1}\\
  \sum\limits_{S\subseteq N} v_S &\le& 1\label{ie_dual_2}\\
  u_S&\le& x_S\quad\forall S\subseteq N \label{ie_implication_1}
\end{eqnarray}
\begin{eqnarray}
  v_S&\le& 1-x_S \forall S\subseteq N\label{ie_implication_2}\\
  x_S &\in & \{0,1\}\quad\forall S\subseteq N\\
  u_S &\ge & 0\quad\forall S\subseteq N\\
  v_S &\ge & 0\quad\forall S\subseteq N
\end{eqnarray}
Inequalities~(\ref{ie_dual_1}) and (\ref{ie_dual_2}) capture the dual linear program to bound $\alpha=\sum\limits_{S\subseteq N} u_S$ 
from above. Inequality~(\ref{ie_implication_1}) models the implication that $u_T$ is zero if $x_T=0$. In the other case where $x_T=1$
the inequality $u_T\le 1$ is redundant since we have for an $i\in T$ ($x_\emptyset=0$) the inequality
$\sum\limits_{\{i\}\subseteq S\subseteq N}u_S-\sum\limits_{\{i\}\subseteq S\subseteq N}\le 0$ from which we conclude 
$x_T\le 1$ using $x_S\ge 0$ and $\sum\limits_{S\subseteq N} v_S\le 1$. Inequalities of that type are called Big-M inequalities, where
we have an \textit{Big-M} of $1$ in our two cases. (See Inequality~(\ref{ie_implication_4}) for an example with a \textit{Big-M}
constant larger than $1$.) Similarly, Inequality~(\ref{ie_implication_2}) models the implication that $v_T$ is zero if $x_T=1$.
In the other case where $x_T=0$ we have the redundant inequality $v_T\le 1$.

The optimum target value of this ILP is the desired value $\s_\mathcal{S}(n)$ for each integer $n$. We have to remark that our
modeling of the set of simple games is highly symmetric and each solution comes with at least $n!$ isomorphic solutions which
is an undesirable feature for an ILP model. With the stated ILP model we were able to computationally prove
Conjecture~\ref{conjecture_spectrum_c_largest} for $n\le 9$ taking less than 37~seconds for $n=7$, less than $279$~seconds for
$n=8$ but already $66224$~seconds and  $161898779$~branch\&bound nodes for $n=9$. For $n=10$ we have computationally obtained
the bounds $\frac{5}{2}\le \s_\mathcal{S}(10)\le 3$ from an aborted ILP solution process. (The LP relaxation gives only the
relatively poor upper bound of $\frac{n-1}{2}$.)

We would like to remark that we can enhance this ILP formulation a bit. Since we have $\s_\mathcal{S}(n+1)\ge \s_\mathcal{S}(n)$
we may apply Lemma~\ref{lemma_reduction_sg} and require $x_{\{i\}}=0$ for all $1\le i\le n$, where $n\ge 2$.

If we replace conditions~(\ref{ie_sg}) by those for complete simple games we can determine the exact values $\s_\mathcal{C}(n)$
for $n\le 16$: $\s_\mathcal{C}(1)=\s_\mathcal{C}(2)=\s_\mathcal{C}(3)=\s_\mathcal{C}(4)=\s_\mathcal{C}(5)=\s_\mathcal{C}(6)=1$,
$\s_\mathcal{C}(7)=\frac{8}{7}$, $\s_\mathcal{C}(8)=\frac{26}{21}$, $\s_\mathcal{C}(9)=\frac{4}{3}$, $\s_\mathcal{C}(10)=\frac{38}{27}$,
$\s_\mathcal{C}(11)=\frac{22}{15}$, $\s_\mathcal{C}(12)=\frac{14}{9}$,  $\s_\mathcal{C}(13)=\frac{33}{20}$,
$\s_\mathcal{C}(14)=\frac{111}{64}$, $\s_\mathcal{C}(15)=\frac{123}{68}$, and $\s_\mathcal{C}(16)=\frac{15}{8}$. 

We would like to remark that the LP relaxation gives only the poor upper bound $\s_\mathcal{C}(n)\le \frac{n-1}{2}$.

\section{The spectrum of critical threshold values}
\label{sec_spectrum}

\noindent
In sections \ref{sec_maximal_critical_alpha} and \ref{sec_ilp_max_alpha} we have considered the maximum critical threshold value for
several classes of games. By $Spec_\mathcal{S}(n)$ we denote the entire set of possible critical threshold values of simple games on $n$
voters. Similarly, we define $Spec_\mathcal{B}(n)$ as the set of possible critical threshold values for Boolean functions
$f:2^N\rightarrow\{0,1\}$ with $f(\emptyset)=0$ and $Spec_\mathcal{C}(n)$ as the set of possible critical threshold values for complete
simple games on $n$ voters. In this section we will provide a superset for the spectrum using known information of the set of
possible determinants of 0/1 matrices. In order to compute the exact sets for small values of $n$ we modify the presented integer linear 
programming approach for the determination of the maximum critical threshold value to that end.

By considering null voters we conclude $Spec_\mathcal{S}(n)\subseteq Spec_\mathcal{S}(n+1)$,
$Spec_\mathcal{B}(n)\subseteq Spec_\mathcal{B}(n+1)$, and $Spec_\mathcal{C}(n)\subseteq Spec_\mathcal{C}(n+1)$. Due to the inclusion
of the classes of games we obviously have $Spec_\mathcal{C}(n)\subseteq Spec_\mathcal{S}(n)\subseteq Spec_\mathcal{B}(n)$ for all $n\in\mathbb{N}$.

Principally, it is possible to determine the sets $Spec_\mathcal{S}(n)$ for small numbers of voters by exhaustive enumeration of
all simple games. As mentioned in the previous section this approach is very limited due to the quickly increasing number of simple
games. In  \cite{hierarchies} the authors have determined $Spec_\mathcal{S}(n)$ for all $n\le 6$ by some theoretical reductions and 
exhaustive enumeration on the restricted set of possible games.

In this section we want to develop an approach based on integer linear programming to determine the spectrum and to utilize results on
Hadamard's maximum determinant problem to obtain a superset of the spectrum. For the latter let us consider the linear
program~(\ref{lp_critical_alpha_value}) determining the critical threshold value of a Boolean function with $f(\emptyset)=0$. 
Each element of the spectrum $Spec_\mathcal{B}(n)$ appears as the optimal solution of this linear program for a certain
Boolean function $f$. If inequality $\alpha\ge 1$ is attained with equality in the optimal solution, the critical threshold
value is $1$. So we may drop this inequality and consider only those functions $f$ where the linear
program~(\ref{lp_critical_alpha_value}) without the inequality $\alpha\ge 1$ has an optimal solution, which is then attained in
a corner. Thus there are subsets $W_1,\dots,W_k\subseteq N$, where $0\le k\le n+1$, with $\sum\limits_{j\in W_i} w_j= 1$ and
$n+1-k$ subsets $L_1,\dots,L_{n+1-k}\subseteq N$ with $-\alpha+\sum\limits_{j\in L_i} w_j= 0$ such that the entire linear
equation system has a unique solution. (We remark that $k=0$ and $k=n+1$ lead to infeasible solutions.)

Writing this equation system in matrix notation $A\cdot(w_1,\dots,w_n,\alpha)^T=b$ we can use Cramer's rule to state
$$
  \alpha=\frac{\det\!\left(A_{\alpha}\right)}{\det(A)},
$$
where $A_{\alpha}$ arises from $A$ by replacing the rightmost column by $b$. Since $A_{\alpha}$ is a $0/1$-matrix we can use an improved
version of Hadamard's bound and have
$$
  \left|\det\!\left(A_{\alpha}\right)\right|\le \frac{(n+2)^{(n+2)/2}}{2^{n+1}},
$$
see e.g.\ \cite{0249.15003}. If we multiply the rightmost column of $A$ by $-1$, which changes the determinant by a factor of
$(-1)^{n+1}$ then it becomes a $0/1$-matrix too and we conclude
$$
  \left|\det(A)\right|\le \frac{(n+2)^{(n+2)/2}}{2^{n+1}}.
$$

\begin{lemma}
  For each $\alpha\in Spec_\mathcal{B}(n)$ there are coprime integers $1\le q< p \le \left\lfloor\frac{(n+2)^{(n+2)/2}}{2^{n+1}}\right\rfloor$
  with $\alpha=\frac{p}{q}$. 
\end{lemma}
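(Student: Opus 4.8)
The plan is to follow the corner-of-the-polytope analysis carried out just before the statement and to express $\alpha$ as a quotient of two determinants of $0/1$-matrices, to which the improved Hadamard bound already invoked in \cite{0249.15003} applies. First I would dispose of the trivial case $\alpha=1$ and assume $\alpha>1$, so that the optimal value of the linear program~(\ref{lp_critical_alpha_value}) with the constraint $\alpha\ge 1$ dropped equals $\alpha$ and is attained at a vertex of the feasible polyhedron in the variables $(w_1,\dots,w_n,\alpha)$. As already observed, this vertex is the unique solution of an $(n+1)\times(n+1)$ linear system $A\cdot(w_1,\dots,w_n,\alpha)^T=b$, each row of which is either a tight winning constraint $\sum_{j\in W_i}w_j=1$, whose row is the $0/1$ incidence vector of $W_i$ padded by a $0$ in the $\alpha$-column and whose right-hand side is $1$, or a tight losing constraint $\sum_{j\in L_i}w_j-\alpha=0$, whose row is the incidence vector of $L_i$ with a $-1$ in the $\alpha$-column and right-hand side $0$. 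In particular $b\in\{0,1\}^{n+1}$, the first $n$ columns of $A$ are $0/1$, its last column has entries in $\{0,-1\}$, and uniqueness of the solution forces $\det(A)\neq 0$.

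Next I would apply Cramer's rule to isolate the last coordinate, namely $\alpha=\det(A_\alpha)/\det(A)$, where $A_\alpha$ is obtained from $A$ by replacing its last (the $\alpha$-)column by $b$. The key observation is that $A_\alpha$ is genuinely a $0/1$-matrix of order $n+1$: its first $n$ columns are unchanged and $0/1$, while the new last column is $b\in\{0,1\}^{n+1}$. Hence $\det(A_\alpha)$ is an integer and, by the improved Hadamard bound for $0/1$-matrices of order $n+1$, one gets $\left|\det(A_\alpha)\right|\le (n+2)^{(n+2)/2}/2^{n+1}$. For the denominator I would multiply the last column of $A$ by $-1$; this merely changes the sign of the determinant but turns $A$ into a $0/1$-matrix as well, so the same bound gives $\left|\det(A)\right|\le (n+2)^{(n+2)/2}/2^{n+1}$, and since $\det(A)$ is a nonzero integer we also have $\left|\det(A)\right|\ge 1$.

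Finally, using $\alpha>0$ (whence $\det(A_\alpha)$ and $\det(A)$ share the same nonzero sign) I would write $\alpha=\left|\det(A_\alpha)\right|/\left|\det(A)\right|$ as a quotient of two positive integers, each lying in $\left[1,\left\lfloor(n+2)^{(n+2)/2}/2^{n+1}\right\rfloor\right]$. Reducing this fraction to lowest terms $\alpha=p/q$ with $\gcd(p,q)=1$ can only decrease numerator and denominator, so both $p$ and $q$ stay bounded by $\left\lfloor(n+2)^{(n+2)/2}/2^{n+1}\right\rfloor$; and $\alpha>1$ then forces $p>q\ge 1$, which is exactly the claimed $1\le q<p\le\left\lfloor(n+2)^{(n+2)/2}/2^{n+1}\right\rfloor$. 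I do not expect a genuine obstacle: all the analytic work (the vertex characterization and the Hadamard estimate) is already in place, and the only points needing care are the bookkeeping that both matrices are truly $0/1$ after the sign adjustment, that $\det(A)\neq 0$, and that passing to coprime $p,q$ preserves the upper bound. The value $\alpha=1$ should be mentioned separately, since it corresponds to $p=q=1$ and thus falls outside the strict inequality $q<p$.
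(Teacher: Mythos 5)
Your proposal is correct and follows essentially the same route as the paper: drop the constraint $\alpha\ge 1$ (treating $\alpha=1$ separately), locate the optimum at a vertex given by $n+1$ tight constraints, apply Cramer's rule to get $\alpha=\det(A_\alpha)/\det(A)$, and bound both determinants by the improved Hadamard bound for $0/1$-matrices after flipping the sign of the last column of $A$. The only differences are cosmetic: you are slightly more careful about the sign bookkeeping and about the fact that reducing to coprime $p,q$ preserves the bound, points the paper leaves implicit.
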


For specific $n$ the uppers bounds on the determinant of $0/1$-matrices can be improved. The exact values for the maximum determinant of a
$n\times n$ binary matrix for $n\le 17$ are given by $1, 1, 2, 3, 5, 9, 32, 56, 144, 320, 1458, 3645$,
$9477, 25515, 131072,$ $327680, 1114112$,
see e.g.\ sequence A003432 in the on-line encyclopedia of integer sequences and the references therein.

Another restriction on the possible critical threshold values is obviously given by the maximum values, i.e.\ $\mu(\chi)\le \s_\mathcal{B}(n)$
(or $\mu(\chi)\le \s_\mathcal{S}(n)$ for simple games, $\mu(\chi)\le \s_\mathcal{C}(n)$ for complete simple games. Further restrictions come
from the possible spectrum of determinants of binary matrices. For binary $n\times n$-matrices all determinants between zero and the maximal
value can be attained. For $n\ge 7$ gaps occur, see e.g.\ \cite{0735.05017}. The spectrum of the determinants of binary
$7\times 7$-matrices was determined in \cite{0221.05045} to be $\{1,\dots, 18\}\cup\{20,24,32\}$. Using this more  detailed information 
we can conclude that the denominator $q$ of the critical threshold value of a Boolean function with $f(\emptyset)=0$ on $6$ voters is at
most $17$. Thus, we are able to compute a finite superset $\Lambda(n)$ of $Spec_\mathcal{B}(n)$ for each number $n$ of voters. 

Our next aim is to provide an ILP formulation in order to determine the entire spectrum for simple games and complete simple games on $n$
voters. Therefore, we consider  the linear program~(\ref{lp_critical_alpha_value}) for the determination of the critical threshold value.
Dropping the constraint $\alpha\ge 1$ and assuming $w_i\ge 0$ we  abbreviate the emerging { linear program by $\min c^Tx,\,Ax\ge b,x\ge 0$. 
If its optimal value is at least $1$ then it coincides with the critical threshold value. Otherwise the game is weighted. By the
strong duality theorem its dual $\max b^Ty\,A^Ty\le c,y\ge 0$ has the same optimal solution if both are feasible. This is indeed the case
taking the dual solution $y=0$ and primal weights of $1$ with an $\alpha$ of $n$. Thus, we can read of the critical threshold value as $c^Tx$
from each feasible solution of the inequality system $Ax\ge b,\,A^Ty\le c,\, c^Tx=b^Ty,\,x,y\ge 0$.

As done in Section~\ref{sec_ilp_max_alpha} we model the underlying simple game by binary variables $x_S$ for the subsets $S\subseteq N$
and use Big-M constraints:
\begin{eqnarray}
  x_\emptyset &=& 0\label{ie_sg_start}\\
  x_N &=& 1\label{ie_grand_coalition}\\
  x_S-x_{S\backslash\{i\}} &\ge& 0\quad\forall \emptyset\neq S\subseteq N, i\in S\label{ie_monotonicity}\\
  x_S &\in & \{0,1\}\quad\forall S\subseteq N\label{ie_sg_end}\\
  w_i &\le& 1\quad\forall i\in N\label{ie_primal_start}\\
  \sum\limits_{i\in S}w_i &\ge& x_S\quad\forall S\subseteq N\label{ie_implication_3}\\
  \sum\limits_{i\in S} w_i &\le& \alpha +|S|\cdot x_S\quad\forall S\subseteq N\label{ie_implication_4}\\
  w_n &\ge& 0\label{ie_primal_end}
\end{eqnarray}
\begin{eqnarray}    
  \sum\limits_{\{i\}\subseteq S\subseteq N} u_S-\sum\limits_{\{i\}\subseteq S\subseteq N} v_S &\le& 0\quad\forall i\in N\label{ie_dual_start}\\
  \sum\limits_{S\subseteq N} v_S &\le& 1\\
  u_S&\le& x_S\quad\forall S\subseteq N \\
  v_S&\le& 1-x_S \forall S\subseteq N\\
  u_S &\ge & 0\quad\forall S\subseteq N\\
  v_S &\ge & 0\quad\forall S\subseteq N\label{ie_dual_end}\\
  \sum\limits_{S\subseteq N} u_S =\alpha \label{ie_optimality_coupling}
\end{eqnarray}
Inequalities (\ref{ie_sg_start})-(\ref{ie_sg_end}) model the simple games. The primal program to determine the critical
threshold value is given as inequalities (\ref{ie_primal_start})-(\ref{ie_primal_end}). W.l.o.g.\ we can restrict the weights to lie
inside $[0,1]$. Inequality~(\ref{ie_implication_3}) states that the weight of each winning coalition is at least $1$ and that the
weight of each losing coalition is at least zero, which is a valid inequality.  Similarly, Inequality~(\ref{ie_implication_4}) is
fulfilled for $x_S=1$ and translates to $w(S)\le \alpha$ for each losing coalition $S$. The formerly used dual linear program is
stated in inequalities (\ref{ie_dual_start})-(\ref{ie_dual_end}). Finally the coupling of the primal and the dual target value
is enforced in Inequality~(\ref{ie_optimality_coupling}).

We remark that in order to destroy a bit of the inherent symmetry, i.e.\ the group of all permutations on $n$ elements
acts on the set of simple games, we might require $w_1\ge\dots\ge w_n$.

Having this inequality system at hand, one may prescribe each element in $\Lambda(n)$ as a possible value for $\alpha$ and check
whether it is feasible, then $\alpha$ is contained in the spectrum, or not. 

Another possibility to determine the entire spectrum is to solve a sequence of ILPs, where we add the target function $\min\alpha$
and the constraint $\alpha\ge l$. As starting value we choose $l=\min \{v\in\Lambda(n):v>1\}$. If the optimal target value is given
by $\alpha'$, we choose $l=\min \{v\in\Lambda(n):v>\alpha'\}$ until the set is empty. We remark that for larger $n$ the values of
$\Lambda(n)$ might be relatively close to each other so that numerical problems may occur.

Using the latter approach, we have verified the results $Spec_\mathcal{S}(1)=Spec_\mathcal{S}(2)=Spec_\mathcal{S}(3)=Spec_\mathcal{S}(4)=\{1\}$,
$Spec_\mathcal{S}(5)=\left\{1,\frac{6}{5},\frac{7}{6},\frac{8}{7},\frac{9}{8}\right\}$, and $Spec_\mathcal{S}(6)=Spec_\mathcal{S}(5)
\cup\Big\{\frac{3}{2},\frac{4}{3},\frac{5}{4},\frac{9}{7},\frac{10}{9},\frac{11}{9},\frac{11}{10},\frac{12}{11},\frac{13}{10},\frac{13}{11},
\frac{13}{12},\frac{14}{11},\frac{14}{13},$ $\frac{15}{13},\frac{15}{14},\frac{16}{13},\frac{16}{15},\frac{17}{13},\frac{17}{14},\frac{17}{15},
\frac{17}{16}\Big\}$ already given in~\cite{hierarchies}. For $n=7$ we have newly determined the smallest non-trivial critical
threshold value $\min Spec_\mathcal{S}(7)\backslash\{1\} =\frac{40}{39}$. 
\footnote{Since the possible spectrum of determinants is given by $\{0,\dots, 40,42,44,45,48,56\}$, see e.g.\
http://www.indiana.edu/$\sim$maxdet/spectrum.html, only $\frac{45}{44}$ had to be ruled out.} For $n=8$ we conjecture
$\min Spec_\mathcal{S}(8)\backslash\{1\} =\frac{105}{104}$\footnote{Here the possible spectrum of determinants is given by
$\{0,\dots,102,104,105, 108, 110, 112, 116,$ $117, 120, 125, 128,  144\}$ so that only $\frac{117}{116}$ might be possible.}.

By dropping the inequalities (\ref{ie_grand_coalition}), (\ref{ie_monotonicity}) and permitting negative weights, i.e.\ $w_i\in\mathbb{R}$,
we can principally determine the entire spectrum for Boolean functions with $f(\emptyset)=0$. For small $n$, the explicit sets are given by
\begin{eqnarray*}
  Spec_\mathcal{B}(1) &=& \left\{1\right\}\\
  Spec_\mathcal{B}(2) &=& \left\{1,2\right\}\\
  Spec_\mathcal{B}(3) &=& \left\{1,\frac{3}{2},2,3\right\}\\
  Spec_\mathcal{B}(4) &=& \left\{1,\frac{5}{4},\frac{4}{3},\frac{3}{2},\frac{5}{3},2,\frac{5}{2},3,4\right\}\\
  Spec_\mathcal{B}(5) &=& \left\{1,\frac{9}{8},\frac{8}{7},\frac{7}{6},\frac{6}{5},\frac{5}{4},\frac{9}{7},\frac{4}{3},\frac{7}{5},\frac{3}{2},\frac{8}{5},\frac{5}{3},
    \frac{7}{4},\frac{9}{5},2,\frac{9}{4},\frac{7}{3},\frac{5}{2},\frac{8}{3},3,\frac{7}{2},4,5\right\}\\
  Spec_\mathcal{B}(6) &=& \Big\{1,\frac{18}{17},\frac{17}{16},\frac{16}{15},\frac{15}{14},\frac{14}{13},\frac{13}{12},\frac{12}{11},\frac{11}{10},\frac{10}{9},
    \frac{9}{8},\frac{17}{15},\frac{8}{7},\frac{15}{13},\frac{7}{6},\frac{13}{11},\frac{6}{5},\frac{17}{14},\frac{11}{9},\\
    && \frac{16}{13},\frac{5}{4},\frac{14}{11},\frac{9}{7},\frac{13}{10},\frac{17}{13},\frac{4}{3},\frac{15}{11},\frac{11}{8},\frac{18}{13},\frac{7}{5},
    \frac{17}{12},\frac{10}{7},\frac{13}{9},\frac{16}{11},\frac{3}{2},\frac{17}{11},\frac{14}{9},\frac{11}{7},\\
    &&\frac{8}{5},\frac{13}{8},\frac{18}{11},\frac{5}{3},\frac{17}{10},\frac{12}{7},\frac{7}{4},\frac{16}{9},\frac{9}{5},\frac{11}{6},\frac{13}{7},\frac{15}{8},
    \frac{17}{9},2,\frac{17}{8},\frac{15}{7},\frac{13}{6},\frac{11}{5},\frac{9}{4},\\
    &&\frac{16}{7},\frac{7}{3},\frac{12}{5},\frac{5}{2},\frac{13}{5},\frac{8}{3},\frac{11}{4},\frac{14}{5},3,\frac{13}{4},\frac{10}{3},\frac{7}{2},
    \frac{11}{3},4,\frac{9}{2},5,6\Big\}
\end{eqnarray*} 

For complete simple games we simply replace the conditions (\ref{ie_sg_start})-(\ref{ie_sg_end}) by those for complete simples games. As complete
simple games with up to $6$ voters are roughly weighted, we have $Spec_\mathcal{C}(n)=\{1\}$ for $n\le 6$. For $n=7$ we have determined
$\min\Big\{Spec_\mathcal{C}(7)\backslash\{1\}\Big\}=\frac{39}{38}$.

\section{Conclusion}
\label{sec_conclusion}

\noindent
In this paper we have considered the critical threshold values for several subclasses of binary voting structures. For Boolean
games an exact upper bound of $\mu_\mathcal{B}(n)=n$ could be determined. The set of achievable values is strongly related to the spectrum
of determinants of binary matrices, so that Hadamard's bound comes into play.

We have strengthened the lower and upper bound on the maximum critical threshold value of a simple game on $n$ voters to
$\left\lfloor\frac{n^2}{4}\right\rfloor/n\le \s_\mathcal{S}(n)\le\frac{n}{3}$. It remains to prove (or to disprove) the conjecture
that the lower bound is tight. By introducing an integer linear programming approach to determine the maximum critical threshold value
we could algorithmically verify this conjecture for all $n\le 9$. On the one hand, this seems to be a rather small number. On the other hand, regarding
the question of the number of simple games, not much more than a lower bound of $10^{42}$ is known. Since the number of simple games grows doubly
exponential, no huge improvements can be expected from an algorithmic point of view.

For complete simple games the problem to determine $\s_\mathcal{C}(n)$ is considerably harder. The large gap
between the stated upper bound $\frac{c_1n\log\log n}{\log n}$ and lower bound $c_2\sqrt{n}$ deserves to be closed
or at least to be narrowed. In order to facilitate the conjectured asymptotics of $\Theta(\sqrt{n})$ we have provided a class of
examples achieving this bound and have proven the respective upper bounds for several subclasses of complete simple games. 

So far we have no structural insights on those complete simple games which achieve $\s_\mathcal{C}(n)$ as their critical threshold value. The given
integer linear programming formulation for $\s_\mathcal{C}(n)$ made it possible to determine exact values for numbers of voters
where even the number of complete simple games is not known. To be more precise, there are $284\,432\,730\,174$ complete
simple games for nine voters, see e.g.\ \cite{min_sum_rep} or \cite{FrMo10}, while exact numbers are unknown for $n\ge 10$. The fact that the exact numbers for
the critical threshold values $c_\mathcal{C}(n)$ for complete simple games are known up to $n=16$, indicates the great potential of our introduced algorithmic
approach. Similar integer linear programming formulations can possibly be developed for other problems on extremal voting schemes. Applications to related
concepts like, e.g., the nucleolus or the cost of stability seem to be promising.

In this paper we leave the question for the complexity to determine the criticial threshold value within a given class of games open, but expect
it to be in NP in general.

Concerning the discriminability of the hierarchy of $\alpha$-roughly weighted simple games, it would be nice to prove (if true) that there is a complete
simple game $\chi$ with critical threshold value $\mu(\chi)=\frac{p}{q}$ for all integers $p\ge q$. Some first experiments let us conjecture
that there even is a complete simple game with two types of voters and one shift-minimal winning vector.

As usual, the relation to other solution concepts from the game theory literature to the critical threshold value should be studied. We have
started this task by considering the cost of stability. Is turns out that the critical threshold value is upper bounded by the cost of stability. From that,
we  could deduce an upper bound of $\sqrt{n}$ for super-additive games. For Boolean games the asymptotic extremal values coincide, while they can differ
to a large extent for concrete games.

The maximum critical threshold value can discriminate between the classes of simple games, complete simple games, 
and weighted voting games, while the cost of stability can not. The concept of a dimension of a simple game is not directly related to the
critical threshold value. 

The concept of $\alpha$-weightedness seems very interesting. More research should be done in that direction. A quite natural idea is to
transfer the concept to ternary voting games, see e.g.\ \cite{springerlink:10.1007/BF01263275} and \cite{1073.91542}, or graph based games like e.g.\ network
flow games. Also effectivity functions, see e.g.\ \cite{springerlink:10.1007/BF01295853}, might be candidates for a generalization of the
basic concept. Last but not least, there are two additional hierarchies of simple games described in \cite{hierarchies} which deserve to be
analyzed in more detail.

\section*{Acknowledgments}
We would like to thank Tatyana Gvozdeva, Stefan Napel and two anonymous referees for their thoughtful comments that helped very much to improve
the presentation of the paper.

\bibliographystyle{apalike}
\bibliography{alpha_roughly_weighted_games}

\appendix
\section{Further side results}

\noindent
In this appendix we mention some additional results, which are obtained with the techniques described in the paper, 
but are a bit to specific to be included in the main part.

\subsection{Strong or proper simple games}

\noindent
In Section~\ref{sec_ilp_max_alpha} we have mentioned that one can easily model restrictions within the class of simple games, 
e.g.\ consider proper or strong simple games. So, for each voting class $\mathcal{X}\in\{\mathcal{B},\mathcal{S},\mathcal{C}\}$
let $\s_{\mathcal{X}}^s(n)$ denote the maximum critical threshold value of a game consisting of $n$ voters in $\mathcal{X}$,
which is strong. Similarly, we define $\s_{\mathcal{X}}^p(n)$ for games which are proper and $\s_{\mathcal{X}}^{ps}(n)$ for
games which are proper and strong. Numerical results for small numbers of voters are stated in Table~\ref{table_ar_proper_strong_csg}.  

\begin{table}[htp]
\begin{center}
  \begin{tabular}{ccccc}
  \hline
  $\mathbf{n}$ & $\mathbf{\s_{\mathcal{C}}(n)}$ & $\mathbf{\s_{\mathcal{C}}^p(n)}$ &
  $\mathbf{\s_{\mathcal{C}}^s(n)}$ & $\mathbf{\s_{\mathcal{C}}^{ps}(n)}$ \\[1mm]
   7 & $\frac{8}{7}\approx 1.142857$ & $\frac{14}{13}\approx 1.076923$ & $\frac{10}{9}=1.\overline{1}$ & 1 \\[1mm]
   8 & $\frac{26}{21}\approx 1.238095$ & $\frac{38}{33}=1.\overline{15}$ & $\frac{26}{21}\approx 1.238095$ & $1$ \\[1mm]
   9 & $\frac{4}{3}=1.\overline{3}$ & $\frac{6}{5}=1.2$ & $\frac{4}{3}=1.\overline{3}$ & $\frac{13}{12}=1.08\overline{3}$ \\[1mm]
  10 & $\frac{38}{27}=1.\overline{407}$ & $\frac{66}{53}\approx 1.245283$ & $\frac{38}{27}=1.\overline{407}$ & $\frac{23}{20}=1.15$ \\[1mm]
  11 & $\frac{22}{15}=1.4\overline{6}$ & $1.290735$ & $\frac{22}{15}=1.4\overline{6}$ & $\frac{43}{36}=1.19\overline{4}$ \\[1mm]
  12 & $\frac{14}{9}=1.\overline{5}$ & $\frac{4}{3}=1.\overline{3}$ & $1.553571$ & $\frac{59}{48}=1.2291\overline{6}$\\[1mm]
  13 & $\frac{33}{20}=1.65$ & $\in[1.3620,1.4211]$ & $\frac{33}{20}=1.65$ & $\approx1.258772$\\[1mm]
  14 & $\frac{111}{64}=1.734375$ & & $\frac{111}{64}=1.734375$ & $\approx 1.298361$ \\[1mm]
  \hline
  \end{tabular}
  \caption{The maximum critical threshold value for complete simple games restricted to strong or proper games.}
  \label{table_ar_proper_strong_csg}
\end{center}
\end{table}

Obviously we have the inequalities $\s_{\mathcal{C}}^{ps}(n)\le \s_{\mathcal{C}}^{p}(n)\le \s_{\mathcal{C}}(n)$
and $\s_{\mathcal{C}}^{ps}(n)\le \s_{\mathcal{C}}^{s}(n)\le \s_{\mathcal{C}}(n)$. Since adding an additional player
to an arbitrary complete simple game, which is winning on its own, yields a strong complete simple game with equal
critical threshold value, we also have $\s_{\mathcal{C}}^{s}(n)\ge \s_{\mathcal{C}}(n-1)$, i.e.\ 
Conjecture~\ref{conjecture_spectrum_c_largest_complete_simple_game} would imply 
$\s_\mathcal{C}^{s}(n)\in\Theta\!\left(\sqrt{n}\right)$. Looking at the numerical values of Table~\ref{table_ar_proper_strong_csg}
one might conjecture $\s_{\mathcal{C}}^p(n)\le \s_{\mathcal{C}}^s(n)$ for all $n$. It would be very nice to have a good lower
bound construction for $\s_{\mathcal{C}}^{ps}(n)$, which then would imply lower bounds for $\s_{\mathcal{C}}^{p}(n)$, 
$\s_{\mathcal{S}}^{ps}(n)$, and $\s_{\mathcal{S}}^{p}(n)$.  

\begin{table}[htp]
\begin{center}
  \begin{tabular}{ccccc}
  \hline
  $\mathbf{n}$ & $\mathbf{\s_{\mathcal{S}}(n)}$ & $\mathbf{\s_{\mathcal{S}}^p(n)}$ &
  $\mathbf{\s_{\mathcal{S}}^s(n)}$ & $\mathbf{\s_{\mathcal{S}}^{ps}(n)}$ \\[1mm]
   5 & $\frac{6}{5}=1.2$ & $1$ & $1$ & $1$ \\[1mm]
   6 & $\frac{3}{2}=1.5$ & $\frac{4}{3}=1.\overline{3}$ & $\frac{3}{2}=1.5$ & $1$ \\[1mm]
   7 & $\frac{12}{7}\approx 1.714286$ & $\frac{7}{5}=1.4$ & $\frac{5}{3}=1.\overline{6}$ & $\frac{4}{3}=1.\overline{3}$ \\[1mm]
   8 & $2$ & $\frac{3}{2}=1.5$ & $2$ & $\frac{7}{5}=1.4$ \\[1mm]
   9 & $\frac{20}{9}=2.\overline{2}$ & $\frac{5}{3}=1.\overline{6}$ & $\frac{11}{5}=2.2$ & $\frac{3}{2}=1.5$ \\[1mm]
  \hline
  \end{tabular}
  \caption{The maximum critical threshold value for simple games restricted to strong or proper games.}
  \label{table_ar_proper_strong_sg}
\end{center}
\end{table}

\begin{lemma}
\label{car_simple_strong_even}
For all $k\ge 2$ we have $\s_{\mathcal{S}}^s(2k)\ge\frac{k}{2}$.
\end{lemma}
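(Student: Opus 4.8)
The plan is to exhibit, for each $k\ge 3$, an explicit strong simple game on $2k$ voters together with a dual certificate of value $k/2$ via Lemma~\ref{lemma_lower_bound_critical_alpha_value_simple_games}; the case $k=2$ is trivial because $k/2=1$ and every simple game has critical threshold value at least $1$. The starting point is the even-$n$ construction from Proposition~\ref{prop_c_largest_lower_bound}: split $N=\{1,\dots,2k\}$ into the $k$ pairs $P_i=\{2i-1,2i\}$ and let every coalition containing some $P_i$ be winning. A coalition fails to contain a pair exactly when it is a partial transversal (at most one voter from each pair), and any coalition of size at least $k+1$ contains a pair; hence every losing coalition has size at most $k$ and the size-$k$ losing coalitions are transversals. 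A certificate realising $k/2$ would put weight $u_{P_i}=\tfrac12$ on each pair, so the voter constraints of Lemma~\ref{lemma_lower_bound_critical_alpha_value_simple_games} force the chosen losing coalitions to cover every voter with total $v$-weight exactly $\tfrac12$; a counting argument ($\sum_L |L|\,v_L\le k\sum_L v_L\le k$, with equality forced) shows these losing coalitions must all be transversals of total weight $1$ covering each voter exactly $\tfrac12$.

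The obstacle is strongness. In the bare pairs game the two complementary transversals ``all odds'' and ``all evens'' are both losing, so it is not strong, and more generally $S$ and $N\setminus S$ are both losing precisely when $S$ is a full transversal. To repair this I would declare a suitable subset of transversals winning so that at least one of each complementary pair becomes winning, while retaining enough losing transversals to build a balanced certificate. Concretely, encode a transversal by $b\in\{0,1\}^k$ ($b_i=1$ meaning the even voter of $P_i$ is chosen) and keep as losing only $T_0:=\{2,4,\dots,2k\}$ (the string $1^k$) together with $T_i$ for $1\le i\le k$, where $T_i$ is the all-odds transversal with pair $i$ switched to its even voter (the weight-one string $e_i$); all remaining transversals are declared winning. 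For $k\ge 3$ none of $T_0,T_1,\dots,T_k$ is the complement of another (their complements have Hamming weights $0$ and $k-1$, neither equal to $1$ nor $k$), so the family is complement-free and each complementary pair of transversals has at least one winning member; monotonicity is immediate since all transversals have size $k$ and every superset of size $\ge k+1$ already contains a pair, while subsets of the losing transversals are partial transversals and stay losing. This yields a well-defined strong simple game $\chi$.

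It then remains to produce the balanced weights. Putting $v_{T_0}=w_0$ and $v_{T_i}=w_1$, the coverage of each even voter is $w_0+w_1$ and that of each odd voter is $(k-1)w_1$, so I would solve $w_0+w_1=\tfrac12$ and $(k-1)w_1=\tfrac12$, giving $w_1=\tfrac{1}{2(k-1)}\ge 0$ and $w_0=\tfrac{k-2}{2(k-1)}\ge 0$ for $k\ge 2$, with total weight $w_0+kw_1=1$. Feeding $u_{P_i}=\tfrac12$ and these $v$-values into Lemma~\ref{lemma_lower_bound_critical_alpha_value_simple_games} makes every voter inequality tight ($\tfrac12-\tfrac12\le 0$) and the objective equal to $\sum_{i=1}^k u_{P_i}=k/2$, whence $\mu(\chi)\ge k/2$ and $\s_{\mathcal S}^s(2k)\ge k/2$. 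The step I expect to be the crux is recognising that strongness rules out the obvious two-coalition certificate and that one must instead spread the losing weight over a complement-free family of transversals that is still perfectly balanced across the two voters of every pair; the explicit choice $\{T_0,T_1,\dots,T_k\}$ is what makes both requirements compatible for all $k\ge 3$.
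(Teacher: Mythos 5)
Your proof is correct, and it follows the same overall strategy as the paper's: the pairs game on $2k$ voters, a prescribed family of losing transversals, and a certificate of value $k/2$ fed into Lemma~\ref{lemma_lower_bound_critical_alpha_value_simple_games}. The difference lies in which transversals are declared losing, and here your choice is actually the sounder one. The paper makes a transversal $\{a_1,\dots,a_k\}$ winning iff its selections in the first two pairs agree (both odd or both even), so the $2^{k-1}$ losing transversals are the disagreeing ones; this family is balanced (each voter lies in exactly $2^{k-2}$ of them), which yields $2^{k-1}\alpha\ge\sum_{A}w(A)=2^{k-2}\sum_i w_i\ge 2^{k-2}k$ and hence $\alpha\ge k/2$ by uniform averaging. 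However, that family is closed under complementation --- the complement of a disagreeing transversal again disagrees --- so a losing transversal and its complement are both losing and the constructed game is not strong as written. Your family $\{T_0,T_1,\dots,T_k\}$ is complement-free for $k\ge 3$ (the complements have Hamming weights $0$ and $k-1$, neither equal to $1$ nor $k$), so strongness genuinely holds, and the weighting $v_{T_0}=\tfrac{k-2}{2(k-1)}$, $v_{T_i}=\tfrac{1}{2(k-1)}$ covers every voter with total weight exactly $\tfrac12$, makes all voter constraints tight, and gives total losing weight $1$ and objective $k/2$. In short, you prove the lemma by the paper's method but with a corrected, complement-free choice of losing transversals and a small explicit dual certificate of $k+1$ coalitions in place of a uniform average over $2^{k-1}$ of them; the $k=2$ base case is also handled correctly.
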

\begin{proof}
  Consider the $k$ coalitions $S_i:=\{2i-1,2i\}$ for $1\le i\le k$ and the $2^k$ coalitions $\{a_1,\dots,a_k\}$ with
  $a_i\in S_i$. Let us denote the latter set of coalitions by $\mathcal{A}$. We can easily check that those coalitions
  form an antichain so that we can arbitrarily prescribe for   each coalition whether it is winning or losing and there
  exist at least one simple game $\chi$ meeting those conditions.  Here we require that the coalitions $S_i$ are winning
  the coalitions $\{a_1,\dots,a_k\}$ are winning if and only if $a_1=1$, $a_2=3$ or $a_1=2$, $a_2=4$. Since the coalitions
  $S_i$ are winning we have $$k\le\sum_{i=1}^k w(S_i)=\sum_{i=1}^n w_i.$$ Since the coalitions in $\mathcal{A}\cap L$, where $L$
  denotes the set of losing coalitions, contain each voter with equal frequency, we have
  $$
    2^{k-1}\alpha\sum_{A\in\mathcal{A}\cap L} w(A) =\left(\sum_{i=1}^n\right)\cdot\frac{2^k}{2}\cdot \frac{k}{2k}.
  $$   
  Combining both inequalities gives $\alpha\ge\frac{k}{2}$.
\end{proof}

We remark that we have $\s_{\mathcal{S}}^s(n)\le \s_{\mathcal{S}}(n)$ so that the bound from Lemma~\ref{car_simple_strong_even}
is tight if Conjecture~\ref{conjecture_spectrum_c_largest} is true.

\begin{lemma}
\label{car_simple_strong_odd}
For all $k\ge 2$ we have $\s_{\mathcal{S}}^s(2k+5)\ge 1+\frac{k(k+1)}{2k+1}$.
\end{lemma}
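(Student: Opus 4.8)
The plan is to exhibit one explicit strong simple game $\chi$ on $n=2k+5$ voters whose critical threshold value meets the bound, and to certify that value from below by producing a feasible solution of the dual linear program of Lemma~\ref{lemma_lower_bound_critical_alpha_value_simple_games}; this is the same two-step strategy used in Proposition~\ref{prop_c_largest_lower_bound} and Lemma~\ref{car_simple_strong_even}. The summand $\frac{k(k+1)}{2k+1}$ should come from the extremal ``consecutive pairs'' game on $2k+1$ voters, and the four extra voters should simultaneously supply the extra summand $1$ and repair strongness. Concretely, I would first set up the backbone on voters $\{1,\dots,2k+1\}$ exactly as in the odd case of Proposition~\ref{prop_c_largest_lower_bound}: minimal winning coalitions $\{i,i+1\}$ for $1\le i\le 2k$, maximal losing coalitions $L_1=\{1,3,\dots,2k+1\}$ and $L_2=\{2,4,\dots,2k\}$, and reuse the multipliers $u_{W_{2i-1}}=\frac{k+1-i}{2k+1}$, $u_{W_{2i}}=\frac{i}{2k+1}$, $v_{L_1}=\frac{k}{2k+1}$, $v_{L_2}=\frac{k+1}{2k+1}$, for which the per-voter constraints of Lemma~\ref{lemma_lower_bound_critical_alpha_value_simple_games} hold with equality, $\sum_T v_T=1$, and the objective equals $\frac{k(k+1)}{2k+1}$.

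Next I would adjoin four voters $x_1,x_2,x_3,x_4$ and fold them into the two maximal losing coalitions, placing $x_1,x_2$ into $L_1$ and $x_3,x_4$ into $L_2$, declaring the four ``crossing'' pairs $\{x_1,x_3\},\{x_1,x_4\},\{x_2,x_3\},\{x_2,x_4\}$ winning (each lies in no enlarged losing coalition) while the ``parallel'' pairs $\{x_1,x_2\},\{x_3,x_4\}$ stay losing. I would then add gadget multipliers to the crossing pairs and read their contribution against the \emph{already present} masses $v_{L_1},v_{L_2}$. Strongness I would check directly from the list of maximal losing coalitions: since each of them now meets both the backbone block and the gadget block, I would argue that the complement of every losing coalition contains either two consecutive backbone voters or a crossing pair, so no coalition and its complement are simultaneously losing. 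Feasibility of the combined $(u,v)$ reduces to verifying the four per-voter inequalities at $x_1,\dots,x_4$ and rechecking $\sum_T v_T\le 1$.

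The crux — and the reason the argument cannot be the naive product of the two games — is that the dual budget $\sum_T v_T\le 1$ is \emph{already exhausted} by the backbone, so the gadget may not introduce fresh losing mass and must instead borrow the slack left in $v_{L_1},v_{L_2}$ after the backbone voters are covered. A fully symmetric assignment only extracts $2\min(v_{L_1},v_{L_2})=\frac{2k}{2k+1}$ from the gadget, yielding $\frac{k^2+3k}{2k+1}$ and thus falling short of the target $1+\frac{k(k+1)}{2k+1}=\frac{k^2+3k+1}{2k+1}$ by exactly $\frac{1}{2k+1}$. The real work is therefore to route the gadget so that its per-voter inequalities become tight against the \emph{larger} class multiplier $v_{L_2}$ and to adjust which extra voters join $L_1$ versus $L_2$ (with a corresponding small change that destroys the offending complementary losing pair), so that the gadget contributes the full unit while strongness is preserved. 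I expect this balancing step, rather than the routine checks, to be the main obstacle. Once a feasible $(u,v)$ with objective $1+\frac{k(k+1)}{2k+1}$ is in hand, Lemma~\ref{lemma_lower_bound_critical_alpha_value_simple_games} gives $\mu(\chi)\ge 1+\frac{k(k+1)}{2k+1}$, and hence $\s_{\mathcal{S}}^s(2k+5)\ge 1+\frac{k(k+1)}{2k+1}$, as claimed.
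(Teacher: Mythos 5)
Your overall strategy --- the consecutive-pairs backbone on $2k+1$ voters certified by the dual multipliers from Proposition~\ref{prop_c_largest_lower_bound}, plus a four-voter gadget meant to contribute an extra unit while restoring strongness --- matches the paper's, but your proof is not complete: the one gadget you actually write down fails, and you explicitly defer the repair (``the real work is therefore to route the gadget so that\dots'') without carrying it out. Concretely, with $L_1'=L_1\cup\{x_1,x_2\}$ and $L_2'=L_2\cup\{x_3,x_4\}$ you have $L_1'\cup L_2'=N$ and $L_1'\cap L_2'=\emptyset$, so these two losing coalitions are complements of one another and the game cannot be strong; and, as you compute yourself, the dual objective is then capped at $\frac{k^2+3k}{2k+1}$, which is $\frac{1}{2k+1}$ below the target. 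Diagnosing these two defects is not the same as curing them, and the cure is the actual content of the lemma. Note also that no reshuffling of which $x_i$ joins which class can save your scheme: as long as the four new voters are partitioned between the two enlarged losing coalitions, those two coalitions remain exact complements.

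The paper resolves both problems simultaneously by using \emph{four} enlarged losing coalitions rather than two. With $B=\{1,3,\dots,2k+1\}$ and $R=\{2,4,\dots,2k\}$ it declares $B\cup\{2k+2,2k+4\}$, $B\cup\{2k+3,2k+5\}$, $R\cup\{2k+3,2k+4\}$, and $R\cup\{2k+2,2k+5\}$ losing, and the respective complements (e.g.\ $R\cup\{2k+3,2k+5\}$ for the first) winning. Since the two pairs attached to $B$ partition the gadget, and likewise for $R$, every prescribed losing coalition has a prescribed winning complement, so strongness is no longer obstructed; and each new voter now lies in exactly one $B$-type and one $R$-type losing coalition, which changes the per-voter dual constraints at the gadget in just the way needed: the paper asserts that the LP restricted to these coalitions attains $1+\frac{k(k+1)}{2k+1}$, recovering the $\frac{1}{2k+1}$ your symmetric assignment loses. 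This four-coalition crossing structure is the missing idea in your proposal.
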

\begin{proof}
  We will construct a class of examples by prescribing for some coalitions whether they are winning or losing. For
  $1\le i\le 2k$ we require that the coalitions $\{i,i+1\}$ are winning. Let $B:=\{2i-1\mid 1\le i\le k+1\}$ and 
  $R:=\{2i\mid 1\le i\le k\}$. Next we require
  \begin{eqnarray*}
    \{2k+2,2k+4\}\cup B \in L && \{2k+3,2k+5\}\cup R\in W,\\ 
    \{2k+3,2k+5\}\cup B \in L && \{2k+2,2k+4\}\cup R\in W,\\
    \{2k+2,2k+5\}\cup B \in W && \{2k+3,2k+4\}\cup R\in L,\\
    \{2k+3,2k+4\}\cup B \in W && \{2k+2,2k+5\}\cup R\in L,
  \end{eqnarray*} 
  where $L$ denotes the set of losing coalitions and $W$ denotes the set of losing coalitions. The linear program
  for the computation of the critical threshold value restricted on the mentioned coalitions has an optimal solution
  of $1+\frac{k(k+1)}{2k+1}$.
\end{proof}

We remark that the lower bound from Lemma~\ref{car_simple_strong_odd} misses the value from
Conjecture~\ref{conjecture_spectrum_c_largest} only by $\frac{1}{n(n-4)}$. Since the computed exact values for 
$\s_{\mathcal{S}}(n)$ from Table~\ref{table_ar_proper_strong_sg} coincide with the lower bounds from
Lemma~\ref{car_simple_strong_even} and Lemma~\ref{car_simple_strong_odd}, we conjecture that they are tight. 

Unfortunately we can not use duality to obtain upper bounds for proper simple games from those for strong simple games.
To this end let us consider the class of examples from the proof of Lemma~\ref{car_simple_strong_even}. We observe that
all coalitions of cardinality at least $k+1$ are winning so that each winning coalition of the dual game, which is strong,
has a cardinality of at least $k$. Thus we may choose weights $w_i=\frac{1}{k}$ for all voters so that the weight of each
losing coalition is at most $2$ while the original game has a critical threshold value of $\max(1,\frac{k}{2})$.

\begin{table}[htp]
\begin{center}
  \begin{tabular}{ccccc}
  \hline
  $\mathbf{n}$ & $\mathbf{\s_{\mathcal{B}}(n)}$ & $\mathbf{\s_{\mathcal{B}}^p(n)}$ &
  $\mathbf{\s_{\mathcal{B}}^s(n)}$ & $\mathbf{\s_{\mathcal{B}}^{ps}(n)}$ \\[1mm]
   1 & $1$ & $1$ & $1$ & $1$ \\[1mm]
   2 & $2$ & $1$ & $1$ & $1$ \\[1mm]
   3 & $3$ & $3$ & $2$ & $2$ \\[1mm]
   4 & $4$ & $4$ & $3$ & $3$ \\[1mm]
   5 & $5$ & $5$ & $4$ & $4$ \\[1mm]
   6 & $6$ & $6$ & $5$ & $5$ \\[1mm]
   7 & $7$ & $7$ & $6$ & $6$ \\[1mm]
   8 & $8$ & $8$ & $7$ & $7$ \\[1mm]
   9 & $9$ & $9$ & $8$ & $8$ \\[1mm]
  \hline
  \end{tabular}
  \caption{The maximum critical threshold value for Boolean games restricted to strong or proper games.}
  \label{table_ar_proper_strong_bg}
\end{center}
\end{table}

\begin{lemma}
  For $n\ge 3$ we have $\s_{\mathcal{B}}^p(n)=n$. 
\end{lemma}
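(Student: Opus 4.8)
The plan is to prove the two bounds separately, with essentially all of the work residing in a single explicit construction for the lower bound. The upper bound is immediate: every proper Boolean function lies in $\mathcal{B}_n$, so by Lemma~\ref{lemma_s_boolean} its critical threshold value cannot exceed $\s_\mathcal{B}(n)=n$, and hence $\s_{\mathcal{B}}^p(n)\le n$. The entire content of the statement is therefore the matching inequality $\s_{\mathcal{B}}^p(n)\ge n$, which I would obtain by exhibiting one proper function $f$ with $\mu(f)=n$.

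For the construction I would take the function $f$ defined by $f(S)=1$ if and only if $|S|=1$; that is, exactly the singletons are winning while all other coalitions, including $\emptyset$ and $N$, are losing. This is the natural adaptation of the extremal example in the proof of Lemma~\ref{lemma_s_boolean}, with the non-singleton coalitions pinned to losing precisely so as to respect properness. The key observation is that this $f$ is proper exactly because $n\ge 3$: a coalition $S$ is winning only when $|S|=1$, in which case $|N\setminus S|=n-1\ge 2$, so $N\setminus S$ is losing; thus no coalition and its complement are simultaneously winning, i.e.\ $f(S)+f(N\setminus S)\le 1$ for all $S$. (For $n=2$ the complement of a singleton is again a singleton, which is why the maximum drops to $1$ there, in accordance with Table~\ref{table_ar_proper_strong_bg}.)

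It then remains to read off $\mu(f)$ from Definition~\ref{def_alpha_roughly_weighted}. Since each singleton $\{i\}$ is winning, any feasible weighting must satisfy $w(\{i\})=w_i\ge 1$, even with the negative weights permitted for Boolean functions. The grand coalition $N$ is losing, so $w(N)=\sum_{i=1}^n w_i\ge n$ forces $\alpha\ge n$, giving $\mu(f)\ge n$. Conversely, the choice $w_i=1$ for all $i$ together with $\alpha=n$ is feasible: every winning (singleton) coalition has weight $1\ge 1$, and every losing coalition $S$ has weight $|S|\le n$. Hence $\mu(f)=n$, and the lower bound follows.

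I do not anticipate a genuine obstacle; the one point requiring care is which notion of properness is in force. The argument uses the complement condition $f(S)+f(N\setminus S)\le 1$ adopted to model proper games in Section~\ref{sec_ilp_max_alpha}, and it is exactly this condition, rather than pairwise intersection of \emph{all} winning coalitions, that the singleton construction satisfies. Once that convention is fixed, every step is routine verification.
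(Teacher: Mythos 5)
Your proof is correct and follows essentially the same route as the paper: the upper bound is inherited from $\s_\mathcal{B}(n)=n$ (Lemma~\ref{lemma_s_boolean}), and the lower bound comes from the Boolean game whose winning coalitions are exactly the singletons. You simply spell out the verification of $\mu(f)=n$ and of properness (under the complement condition $f(S)+f(N\setminus S)\le 1$ used in the paper's ILP modeling, which is the right reading here since the example visibly fails the pairwise-intersection version of properness) that the paper leaves implicit.
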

\begin{proof}
  Of course we have $\s_{\mathcal{B}}^p(n)\le \s_{\mathcal{B}}(n)=n$. A proper example achieving this bound
  is given by the Boolean game whose winning coalitions coincide with the coalitions of size one.
\end{proof}

\begin{lemma}
We have $\s_{\mathcal{B}}^s(n)=\max(1,n-1)$ for all $n\in\mathbb{N}$. 
\end{lemma}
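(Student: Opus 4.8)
\emph{Plan.} I would prove the two bounds $\s_{\mathcal{B}}^s(n)\le\max(1,n-1)$ and $\s_{\mathcal{B}}^s(n)\ge\max(1,n-1)$ separately, reusing the unit-weight idea from the proof of Lemma~\ref{lemma_s_boolean}.

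For the upper bound, the first observation is that strongness already pins down the value at the grand coalition: taking $S=\emptyset$ in the defining condition $f(S)+f(N\setminus S)\ge 1$ and using $f(\emptyset)=0$ forces $f(N)=1$. I would then simply use the weights $w_i=1$ for all $i$. Every winning coalition is nonempty (because $f(\emptyset)=0$) and hence has weight at least $1$, while every losing coalition is a \emph{proper} subset of $N$ (because $N$ is winning) and hence has weight equal to its cardinality, which is at most $n-1$. Together with the built-in requirement $\alpha\ge 1$ this gives $\mu(f)\le\max(1,n-1)$ for every strong $f\in\mathcal{B}_n$, so $\s_{\mathcal{B}}^s(n)\le\max(1,n-1)$. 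Since $\mu(f)\ge 1$ always, this already settles the cases $n\le 2$, where $\max(1,n-1)=1$.

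For the lower bound it remains to treat $n\ge 3$ and exhibit a strong Boolean game with critical threshold value $n-1$. I would define $f$ to be losing exactly on $\emptyset$ and on $N\setminus\{n\}$, and winning on every other coalition. This $f$ is strong because the condition $f(S)+f(N\setminus S)\ge 1$ can only fail when both $S$ and $N\setminus S$ are losing, i.e.\ both lie in $\{\emptyset,\,N\setminus\{n\}\}$; but the complements of these two sets are $N$ and $\{n\}$, which are both winning, so no such $S$ exists. Moreover $f(\emptyset)=0$. Because $n\ge 3$ the losing coalition $N\setminus\{n\}$ has at least two elements and is therefore distinct from every singleton, so all singletons are winning. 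Any feasible weighting then satisfies $w_i\ge 1$ for all $i$ (from $w(\{i\})\ge 1$), whence the losing coalition $N\setminus\{n\}$ forces $\alpha\ge\sum_{i=1}^{n-1}w_i\ge n-1$. Combined with the upper bound this yields $\mu(f)=n-1$, so $\s_{\mathcal{B}}^s(n)\ge n-1=\max(1,n-1)$ for $n\ge 3$.

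Putting the two bounds together gives the claimed equality for all $n$. I expect no serious obstacle here; the only points needing care are the edge cases $n\in\{1,2\}$, where $n-1<1$ makes the $\max$ with $1$ necessary, and the verification that the constructed game is genuinely strong—which is why the losing $(n-1)$-set must be chosen large enough (i.e.\ $n\ge 3$) to avoid clashing with the winning singletons.
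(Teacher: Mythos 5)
Your proof is correct and follows essentially the same route as the paper: unit weights combined with the observation that strongness forces $f(N)=1$ (so losing coalitions have at most $n-1$ members) for the upper bound, and for $n\ge 3$ a game in which all singletons are winning but some $(n-1)$-coalition is losing for the lower bound. The only cosmetic difference is that you declare a single $(n-1)$-set losing where the paper declares all of them losing; both choices yield a strong game and force $\alpha\ge n-1$.
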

\begin{proof}
Since the empty set is a losing coalition, its complement, the grand coalition, has to be winning. Thus every losing coalition
consists of at most $n-1$ members. Choosing weights $w_i=1$ for all voters gives a feasible weighting with $\alpha\le n-1$. For
the other direction consider the strong \textit{game} in $\mathcal{B}_n$ with $n\ge 3$, whose losing coalitions are the empty set
and the coalitions of size $n-1$. Since all coalitions of size $1$ are winning, the weights of the players have to be at least
one so that the losing coalitions of cardinality $n-1$ have a weight of at least $n-1$. 
\end{proof}

\begin{lemma}
We have $\s_{\mathcal{B}}^{ps}(n)=\max(1,n-1)$ for all $n\in\mathbb{N}$. 
\end{lemma}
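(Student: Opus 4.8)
The plan is to sandwich $\s_{\mathcal{B}}^{ps}(n)$ between matching bounds. For the upper bound I would simply note that every proper \emph{and} strong Boolean game is in particular strong, so $\s_{\mathcal{B}}^{ps}(n)\le \s_{\mathcal{B}}^{s}(n)=\max(1,n-1)$ by the previous lemma; combined with the fact that $\mu(f)\ge 1$ for every $f$, this already settles the cases $n\le 2$, where $\max(1,n-1)=1$.

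For $n\ge 3$ it remains to exhibit a proper and strong $f\in\mathcal{B}_n$ with $\mu(f)=n-1$. The key observation is that for Boolean functions there is no monotonicity constraint, so the only requirement on a proper-and-strong game is $x_S+x_{N\setminus S}=1$ for every $S\subseteq N$. Since $S\mapsto N\setminus S$ is a fixed-point-free involution on $2^N$, the subsets split into $2^{n-1}$ complementary pairs, and choosing exactly one winner in each pair yields a well-defined $ps$ game. I would force the pair $\{\emptyset,N\}$ to have $N$ winning (required by $f(\emptyset)=0$) and each pair $\{\{i\},N\setminus\{i\}\}$ to have the singleton $\{i\}$ winning (hence the $(n-1)$-coalition $N\setminus\{i\}$ losing); the remaining pairs, those with $2\le|S|\le n-2$, may be assigned arbitrarily.

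The lower bound then follows from the forced structure alone. In any representation of this $f$, each winning singleton gives $w_i\ge 1$, so all weights are automatically positive and the possibility of negative weights flagged for general Boolean functions plays no role here. Each losing coalition $N\setminus\{i\}$ gives $\sum_{j\neq i}w_j\le\alpha$, and since every $w_j\ge 1$ the left-hand side is at least $n-1$; hence $\alpha\ge n-1$ and $\mu(f)\ge n-1$. Together with the upper bound $\mu(f)\le\s_{\mathcal{B}}^{s}(n)=n-1$ this gives $\mu(f)=n-1$, so $\s_{\mathcal{B}}^{ps}(n)=n-1$ for $n\ge 3$, matching $\max(1,n-1)$.

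I do not expect a genuine obstacle: the argument reuses the strong-game upper bound together with a one-line weight estimate. The only point deserving care is verifying that the prescribed assignment is simultaneously proper and strong, i.e.\ that forcing the singletons and the $\{\emptyset,N\}$ pair never clashes with the free assignment of the middle pairs. This is immediate once one observes that these coalitions lie in distinct complementary pairs, so no single coalition is ever assigned two conflicting values.
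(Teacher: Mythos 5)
Your proof is correct and follows essentially the same route as the paper: the upper bound is inherited from $\s_{\mathcal{B}}^{s}(n)=\max(1,n-1)$, and the lower bound comes from a proper-and-strong Boolean game in which all singletons are winning (forcing $w_i\ge 1$) and hence all $(n-1)$-coalitions are losing, giving $\alpha\ge n-1$. The only cosmetic difference is that the paper fixes the remaining complementary pairs by a cardinality rule (with ties of size $n/2$ decided by membership of voter $1$), whereas you leave them arbitrary; both choices yield a valid proper and strong game.
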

\begin{proof}
  Since $\s_{\mathcal{B}}^{ps}(n)\le\s_{\mathcal{B}}^{s}(n)=\max(1,n-1)$ it suffice to construct an example whose
  critical threshold value reaches the upper bound. To this end we define the strong and proper Boolean game $\chi$
  for $n\ge 3$ as follows: The empty coalition is loosing, the grand coalition is winning, coalitions with sizes between one
  and $\frac{n-1}{2}$, coalitions with sizes between $\frac{n+1}{2}$ and $n-1$ are winning, and coalitions of cardinality 
  $\frac{n}{2}$ are winning if and only if they contain voter~$1$.   
\end{proof}

\subsection{Restrictions on the number of shift-minimal winning vectors}

\noindent
Using the described ILP approach we may also exactly determine the maximal alpha-values $\s_{\mathcal{C}}(n,1)$ of complete
simple games with $n$ players and a single shift-minimal winning coalition. As all complete simple games with at most
six voters are roughly weighted we have $\tilde{s}(n,1)=1$ for $n=6$. The next exact values are given by
\begin{itemize}
 \item $\s_{\mathcal{C}}(7,1)=\frac{10}{9}\approx 1.111111$: (2, 5); (1, 2); (2, 0), (0, 5)
 \item $\s_{\mathcal{C}}(8,1)=\frac{6}{5}=1.2$: (2, 6); (1, 2); (2, 0), (0, 6)
 \item $\s_{\mathcal{C}}(9,1)=\frac{15}{11}\approx 1.272727$: (2, 7); (1, 2); (2, 0), (0, 7)
 \item $\s_{\mathcal{C}}(10,1)=\frac{4}{3}\approx 1.333333$: (2, 8); (1, 2); (2, 0), (0, 8)
 \item $\s_{\mathcal{C}}(11,1)\approx 1.41176470588$: (3, 8); (1, 3); (3, 0), (0, 8)
 \item $\s_{\mathcal{C}}(12,1)=\frac{3}{2}=1.5$: (3, 9); (1, 3); (3, 0), (0, 9)
 \item $\s_{\mathcal{C}}(13,1)\approx 1.57894736842$: (3, 10); (1, 3); (3, 0), (0, 10) 
 \item $\s_{\mathcal{C}}(14,1)=\frac{33}{20}=1.65$: (3, 11); (1, 3); (3, 0), (0, 11) 
\end{itemize}
Here we also state the cardinality vector, the list of shift-minimal winning vectors, and the list of shift-maximal
losing vectors of an example reaching the upper bound $\s_{\mathcal{C}}(n,1)$, respectively. 

\bigskip

We can enhance our ILP formulations to additionally treat conditions on the shift-minimal winning coalitions easily.
For $S\subseteq N$ we introduce a binary variable $s_S$ with the meaning that $s_S=1$ iff coalition $S$ is a shift
minimal winning coalition. As conditions we have
\begin{eqnarray*}
  s_S\le x_S\\
  s_S\le 1-x_{S'}\quad \forall S'\prec S:\nexists S'':S'\prec S''\prec S\\
  -x_S+\sum\limits_{S'\prec S:\nexists S'':S'\prec S''\prec S} x_{S'}\,+\, s_S \ge 0.
\end{eqnarray*}
By setting
$$
  \sum\limits_{S\subseteq N} s_S=r
$$
we can easily formulate exact values, lower or upper bounds for the number $r$ of shift-minimal winning coalitions.
To be able to express the number $t$ of equivalence classes of voters we introduce the functions
$\varphi_i:2^N\rightarrow\{0,1\}$ for all $1\le i\le n-1$ where $\varphi_i(S)=1$ iff a shift-minimal winning
coalition $S$ implies that voter $i$ and voter $i+1$ have to be in different equivalence classes. We use binary
variables $p_i$ for $1\le i\le n-1$ and the constraints
\begin{eqnarray*}
  p_i\ge s_S \cdot\varphi_i(S)\quad\forall S\subseteq N,1\le i\le n-1\\
  p_i\le \sum\limits_{S\subseteq N} s_S\cdot\varphi_i(S)\quad\forall 1\le i\le n-1\\
  \sum\limits_{i=1}^{n-1} p_i=t.
\end{eqnarray*}
Let us denote by $\s_{\mathcal{C}}(n,r,t)$ the maximum critical $\alpha$-value of a complete simple game with $r$
shift-minimal winning coalitions consisting of $n$ voters being partitioned into $t$ equivalence classes. We have
$\s_{\mathcal{C}}(7,1,2)=\frac{10}{9}$, $\s_{\mathcal{C}}(7,2,2)=\frac{17}{15}$,
$\s_{\mathcal{C}}(7,3,2)=\frac{8}{7}$, $\s_{\mathcal{C}}(7,4,2)=\frac{15}{15}$, $\s_{\mathcal{C}}(7,5,2)=\frac{19}{17}$,
and there are no such games for $r\ge 6$. Examples of the corresponding sets of the shift-minimal winning coalitions are given
by $\{35\}$, $\{41,70\}$, $\{44,49,67\}$, $\{43,44,49,67\}$, and $\{31,60,86,88,96\}$, respectively.


\section{Comparision of different ILP solvers}
We give some running time information for different ILP solvers in Table~\ref{table_compare_cplex_vs_gurobi}.

\begin{table}[htp]
  \begin{center}
    \begin{tabular}{rrrrrrrrr}
    \hline\hline
        & \multicolumn{2}{c}{CPLEX} & \multicolumn{2}{c}{CPLEX$^\star$}& \multicolumn{2}{c}{Gurobi~4.0.0} & \multicolumn{2}{c}{Gurobi~4.5.0}\\
    $n$ & nodes & seconds & nodes & seconds & nodes & seconds & nodes & seconds\\  
    \hline
     7 &   459 &    0.4 &  1113 &  0.7 &    975 &    0.5 &    582 &   0.4 \\
     8 &  3721 &     10 &  2271 &  3.7 &   1900 &    1.7 &   1715 &   1.8 \\
     9 &  3594 &     25 &  3297 &   14 &   3153 &     15 &   3724 &    12 \\
    10 & 11799 &    154 &  8974 &   94 &  12008 &    123 &  20988 &    83 \\
    11 & 33312 &   2052 & 42340 & 2131 &  29049 &    349 & 102306 &   979 \\
    12 & 55180 &\it{32379}&     &      &  45752 &   1301 & 215336 &  5403 \\
    13 & 94982 &\it{304255}&    &      &  64962 &   4318 &  83393 & 20408 \\
    14 &       &        &       &      &  97532 &  22230 &        &       \\
    15 &       &        &       &      & 152047 & 134118 &        &       \\
    16 &       &        &       &      & 308240 & 230964 &        &       \\
    \hline\hline
    \end{tabular}
    \caption{Comparing different ILP solvers (using 4~available kernels).}
    \label{table_compare_cplex_vs_gurobi}
  \end{center}
\end{table}
 
 The solvers CPLEX~12.1.0 and Gurobi~4.0.0 are used with the standard parameter settings. Using the tuning option of CPLEX we find
 out that the parameter settings \texttt{mip strategy heuristicfeq -1}, \texttt{mip strategy probe -1}, and \texttt{mip strategy
 variableselect 4} might be better suited. The results are summarized under column CPLEX$^\star$ of Table~\ref{table_compare_cplex_vs_gurobi}.
 We may say that these parameter settings might be good for small instances but can not be generalized to
 larger instances easily.

\end{document}